\theoremstyle{plain}
\newtheorem{THEOREM}{Theorem}[section]
\newtheorem{LEMMA}[THEOREM]{Lemma}
\newtheorem{PROP}[THEOREM]{Proposition}
\theoremstyle{definition}
\newtheorem{DEF}[THEOREM]{Definition}
\theoremstyle{remark}
\newtheorem{REMARK}{Remark}[section]
\newcommand{\N}{\ensuremath{\mathbb{N}}}   
\newcommand{\R}{\ensuremath{\mathbb{R}}}   
\newcommand{\Q}{\ensuremath{\mathbb{Q}}}
\def \a {\alpha}
\def \b {\beta}
\def \d {\delta}
\def \g {\gamma}
\def \e {\varepsilon}
\def \i {\iota}
\def \n {\nabla}
\def \s {\sigma}
\def \th {\theta}
\def \W {\mathcal{W}}
\def \O {\Omega}
\def \bu {{\bf u}}
\def \id {{\bf id}}
\def \cE {\mathcal{E}}
\def\cF {\mathcal {F}}
\def \cM {\mathcal{M}}
\def \cP {\mathcal{P}}
\def \cR {\mathcal{R}}
\def \cW {\mathcal{W}}
\def \wtA {\widetilde{A}}
\def \tD {\widetilde{D}}
\def \tM {\widetilde{M}}
\def \trho {\widetilde{\rho}}
\def \M {\mathcal{M}}
\def\cprime{$'$}
\def \loc {\mathrm{loc}}
\def \< {\langle}
\def \> {\rangle}
\def \p {\partial}
\newcommand{\sgn}[1]{\mathrm{sgn}(#1)}
\DeclareMathOperator{\diam}{diam} %
\DeclareMathOperator{\supp}{supp} %
\DeclareMathOperator{\tr}{trace} %
\def \cP {\mathcal{P}}
\def \Lip {\mathrm{Lip}}
\def \dd  {\mathrm{d}}
\def \dt  {\, \mathrm{d}t}
\def \dt  {\, \mathrm{d}t}
\def \dx  {\, \mathrm{d}x}
\def \dy  {\, \mathrm{d}y}
\def \dz  {\, \mathrm{d}z}
\def \dr  {\, \mathrm{d}r}
\def \ds  {\, \mathrm{d}s}
\def \dv  {\, \mathrm{d}v}
\def \dw  {\, \mathrm{d}w}
\def \barx {\overline{x}}
\def \bary {\overline{y}}
\def \bart {\overline{t}}
\def \bars {\overline{s}}
\def\dbar{\,\dd\barx\,\dd\bart\,\dd\bary\,\dd\bars}
\title[Sticky particle Cucker--Smale dynamics and the Euler-alignment system]{Sticky particle Cucker--Smale dynamics and the entropic selection principle for the 1D Euler-alignment system}
\author{Trevor M. Leslie}
\address[T.M. Leslie]{Department of Mathematics, University of Southern California, Los Angeles, CA 90089}
\email{lesliet@usc.edu}
\author{Changhui Tan}
\address[C. Tan]{Department of Mathematics, University of South Carolina, Columbia, SC 29208}
\email{tan@math.sc.edu}
\subjclass[2010]{35B30, 35D30, 35Q35, 35Q92, 76N10}
\keywords{Euler-alignment system, weak solutions, entropy selection principle, sticky particle dynamics, flocking}
\begin{document}
	
\begin{abstract}
We develop a global wellposedness theory for weak solutions to the 1D Euler-alignment system with measure-valued density, bounded velocity, and locally integrable communication protocol. A satisfactory understanding of the low-regularity theory is an issue of pressing interest, as smooth solutions may lose regularity in finite time. However, no such theory currently exists except for a very special class of alignment interactions.  We show that the dynamics of the 1D Euler-alignment system can be effectively described by a nonlocal scalar balance law, the entropy conditions of which serves as an \emph{entropic selection principle} that determines a unique weak solution of the Euler-alignment system.  Moreover,  the distinguished weak solution of the system can be approximated by the \emph{sticky particle   Cucker--Smale dynamics}. Our approach is inspired by the work of Brenier and Grenier \cite{brenier1998sticky} on the pressureless Euler equations.  
\end{abstract}

\maketitle
\thispagestyle{empty} 

\vspace{-10 mm}

\setstretch{0.75}
\begin{small}
\tableofcontents
\end{small}

\setstretch{1}

\section{Introduction}

We are interested in the following Euler-alignment system
\begin{equation}
\label{e:EA}
\left\{ \begin{array}{rcl}
\p_t \rho + \n_x\cdot (\rho \bu) & = & 0\,, \qquad (x,t)\in \R^d\times \R_+, \\
\p_t (\rho \bu) + \n_x \cdot (\rho \bu \otimes \bu) & = &  \displaystyle \int_{\R^d} \rho(x,t)\rho(y,t) \phi(x-y) (\bu(y,t) - \bu(x,t))\dy,
\end{array}
\right.
\end{equation}
subject to the initial data
\[
\rho(x,0)=\rho^0(x),\quad \bu(x,0)=\bu^0(x).
\]
Here $\rho\ge 0$ and $\bu\in \R^d$ represent density and velocity, respectively.  We shall make the global assumption that $\rho$ is normalized to have total mass $1$. The term on the right-hand side of the equation for the momentum $\rho \bu$ is the
\emph{alignment} force. The function $\phi$ is called the \emph{communication protocol}, and it governs the strength of the interactions between the `agents' that comprise the density profile. Throughout the paper, we will assume $\phi$ is non-negative, locally integrable, and radially decreasing.

The Euler-alignment system comes from the theory of collective behavior.  Its salient feature is the nonlocal alignment interaction, which for appropriate $\phi$ leads to a remarkable long-time behavior referred to as \emph{flocking} (a term intentionally reminiscent of a group of birds).  We will discuss flocking in due course, but we do not attempt a comprehensive overview here. The most complete reference is \cite{ShvydkoyBook}, which also contains references to many other excellent reviews. 

The nonlocality of \eqref{e:EA} is a notorious difficulty in the study of the wellposedness and long-time behavior for this equation.  However, there are two important cases where the nonlocality drops out.  If $\phi \equiv 0$,  then \eqref{e:EA} reduces to the well-studied \emph{pressureless Euler equations}. This case does not exhibit the alignment features associated with non-degenerate $\phi$, but its more developed wellposedness theory showcases an arsenal of tools that one can try out on the Euler-alignment system.  The other situation where nonlocality is not truly present is that of \emph{all-to-all coupling}, where $\phi$ is a positive constant.  Solutions of the all-to-all coupled system exhibit most features of the long-time behavior that one expects for more general $\phi$. However, the analysis of this  case is simpler: the alignment force reduces to a linear and local damping.  

In this paper, we develop a global wellposedness theory for weak solutions of the 1D Euler-alignment system \eqref{e:EA} with measure-valued density, and bounded velocity.  Our analysis covers the classical setup when the communication protocol $\phi$ is bounded and Lipschitz. More interestingly, it also works for the case when $\phi$ is \emph{weakly singular}, namely it has an integrable singularity at the origin. 
We show an asymptotic flocking behavior for the solutions we construct.  Our approach adapts the \emph{sticky particle} approximation, originally developed by Brenier and Grenier \cite{brenier1998sticky} to treat the 1D pressureless Euler equations. We require a detailed understanding of the relationship between the discrete and hydrodynamic settings; let us therefore review the derivation of \eqref{e:EA} from the Cucker--Smale system. 

\subsection{A brief derivation of the Euler-alignment system}

\label{ss:Derivation}

The Euler-alignment system can be derived as a hydrodynamic version of the celebrated Cucker--Smale system of ODE's \cite{CS2007a,CS2007b}:
\begin{equation}
\label{e:CS}
\begin{cases} 
\displaystyle\frac{\dd x_i}{\dd t} = v_i \\
\displaystyle\frac{\dd v_i}{\dd t} =  \sum_{\substack{j=1\\x_j\ne x_i}}^N m_j \phi(x_j - x_i) (v_j - v_i),
\end{cases}
\qquad 
(x_i,v_i)\in \R^d\times \R^d,\quad i=1,\cdots,N.
\end{equation}
This system governs the motion of $N$ agents with  masses $m_i\ge 0$, positions $x_i$ and velocities $v_i$. As the number of agents goes to infinity, one can derive a kinetic formulation, using BBGKY hierarchies \cite{HT2008} or mean-field limits \cite{HaLiu2008, carrillo2010particle, MuchaPeszek2018}.  The kinetic distribution function $f(x,v,t)$ solves the Vlasov-type equation 
\begin{equation}
\label{e:kineticCS}
\renewcommand*{\arraystretch}{1.5}
\left\{ \begin{array}{l}
\p_t f + v\cdot \n_x f + \n_v \cdot \big(fF(f)\big) = 0, \\
F(f)(x,v,t) = \displaystyle\int_{\R^d\times\R^d} f(y,w,t)\phi(x-y)(w-v)\dw\dy,
\end{array}\right.
\qquad (x,v,t)\in \R^d \times \R^d \times \R_+.
\end{equation}
Define the macroscopic density and momentum by
\[
\rho(x,t) = \int_{\R^d} f(x,v,t)\dv, 
\qquad 
\mathbf{P}(x,t)=\rho(x,t) \bu(x,t) = \int_{\R^d} vf(x,v,t)\dv.
\]
Here $\bu(\cdot, t)$ is the macroscopic velocity,  well-defined on $\O(t):=\{x:\rho(x,t)>0\}$.  Taking zeroth and first moments on $v$ of the kinetic system then yields 
\begin{equation}
\label{e:EAReynolds}
\renewcommand*{\arraystretch}{1.8}
\left\{ \begin{array}{rcl}
\p_t \rho + \n_x\cdot (\rho \bu) & = & 0, \qquad \qquad (x,t)\in \R^d\times \R_+,\\
\empty \p_t (\rho \bu) + \n_x \cdot (\rho \bu \otimes \bu+\cR) & = & 
\displaystyle \int_{\R^d} \rho(x,t)\rho(y,t) \phi(x-y) (\bu(y,t) - \bu(x,t))\dy,\\
\cR(x,t) & = & \displaystyle\int_{\R^d} (v-\bu(x,t))\otimes (v-\bu(x,t))f(x,v,t)\dv,
\end{array}
\right.
\end{equation}
where $\cR$ denotes the Reynolds stress tensor.  Finally, one obtains the pressureless Euler-alignment system \eqref{e:EA} as a hydrodynamic limit by taking a \emph{monokinetic   ansatz} $f(x,v,t) = \rho(x,t)\d(v-\bu(x,t))$, which eliminates $\cR$ from \eqref{e:EAReynolds}. Rigorous justification (for bounded $\phi$) can be found in \cite{KangVasseur2015,FigalliKang2019,ShvydkoyBook}.

An alternative hydrodynamic limit can be obtained \cite{KMT2013,KMT2014,KMT2015} by taking an \emph{ isothermal ansatz} $f(x,v,t) = (2\pi)^{-d/2}\rho(x,t)\exp(-|v-\bu(x,t)|^2/2)$, in which case $\n_x\cdot\cR$ becomes a pressure term $\n_x \rho$. The resulting \emph{isothermal Euler-alignment system} was investigated in \cite{CCTT2016,ChoiEAPressure2019}. A more general class of \emph{isentropic Euler-alignment system} with a pressure term $\n_x(\rho^\gamma),  \gamma\geq1$ was considered in \cite{ConstantinDrivasShvydkoy2020,chen2020global}. The regularity theories of these cases are less well-developed than that of their pressureless cousin.  We do not treat the pressured system further in the present work, as the assumption of monokineticity is strongly embedded in our framework.
  
Finally, many authors prefer for technical reasons to replace \eqref{e:EA}$_2$ with the velocity equation
\begin{equation}
\label{e:EAv}
\p_t \bu + \bu \cdot \n \bu = \int \rho(y,t) \phi(x-y)(\bu(y,t) - \bu(x,t))\dy,
\end{equation}
and to define $\bu(\cdot, t)$ on all of $\R^d$, or on some domain containing $\supp\rho(t)$.  We will not use the equation \eqref{e:EAv} directly; the divergence form of \eqref{e:EA} is more amenable to the framework we want to build.  We mostly do not distinguish between the two formulations in our discussion of the literature.

\subsection{Existing wellposedness theory on the pressureless Euler-alignment system}
\label{ss:WPoverview}


The global regularity theory for smooth solutions of the Euler-alignment system \eqref{e:EA} is fairly well-established in one space dimension, with different types of alignment interactions.

The first scenario is when the interaction is \emph{regular}, namely $\phi$ is bounded and Lipschitz.
At the heart of many wellposedness results is the following quantity, introduced in~\cite{CCTT2016}:
\begin{equation}\label{eq:e}
e(x,t) = \p_x u(x,t) + \phi*\rho(x,t).
\end{equation}
Here $*$ denotes convolution in the spatial variable. Remarkably, $e$ satisfies the simple evolution equation
\begin{equation}\label{eq:e}
  \p_t e + \p_x (ue) = 0.
\end{equation}
This structure yields a precise description of global wellposedness via a \emph{critical threshold} condition \cite{CCTT2016}: if the initial condition is \emph{subcritical}, satisfying $e^0\geq0$ on all of $\R$, then the solution stays globally regular; otherwise, \emph{supercritical} initial data lead to shock formations in finite time.  

The next scenario is when the interaction is \emph{weakly singular}, that is, when $\phi$ has an integrable singularity at the origin, e.g. $\phi(r)\sim r^{-s}$ with $s\in(0,1)$. Under this setup, a similar but distinct critical threshold condition holds \cite{Tan2020euler}; the main additional subtlety occurs when $e^0$ takes the value $0$.

In the presence of vacuum $\rho^0(x)=0$, the critical threshold conditions are inaccessible from physical initial data $(\rho^0,P^0)$.  The first author \cite{L2019CTC} has  worked with an antiderivative $\psi$ of $e$, defined by
\begin{equation}
\label{e:psidef}
\psi(x,t) = u(x,t) +  \Phi*\rho(x,t),
\end{equation}
where $\Phi$ is the unique odd antiderivative of $\phi$.
The critical threshold condition can then be expressed in terms of $\psi^0$, which must be nondecreasing \emph{on the support of $\rho^0$} to propagate regularity. This condition is also sufficient when the protocol $\phi$ is regular; for weakly singular interactions, additional assumptions are needed to propagate regularity. Sharp conditions are not known. In the present work, we make extensive use of the quantity $\psi$ and its discrete analog for the Cucker--Smale system.

Another interesting scenario is when the interaction is \emph{strongly singular}, namely $\phi$ has a \textit{non}-integrable singularity at the origin. In this case, the alignment produces nonlocal dissipation that regularizes the solution. It has been shown that solutions are globally regular for \emph{all} smooth initial data away from vacuum \cite{DKRT,KT,ShvydkoyTadmorI,ShvydkoyTadmorII,ShvydkoyTadmorIII}. (However, the non-vacuum stipulation is important here, c.f. \cite{Tan2019singularity, ArnaizCastro2019}.) Moreover, for rough initial data, the solutions are instantaneously regularized thanks to the strongly singular interaction, see e.g. \cite{DanchinMuchaPeszekWroblewski2018, Leslie2019, Lear2021unising, bai2022global}.

The Euler-alignment system \eqref{e:EA} is less  well understood in higher dimensions, largely because of the lack of a scalar quantity $e$ that solves a simple continuity equation like \eqref{eq:e}. A natural candidate for a multi-dimensional replacement is $e = \n_x \cdot \bu + \phi*\rho$, which satisfies the equation $\p_t e+\n_x \cdot (\bu e) = (\n_x\cdot \bu)^2 - \tr((\n_x \bu)^2)$.  The right-hand side vanishes if the velocity is \emph{unidirectional}, i.e., $\bu(x,t) = u(x,t)\mathbf{h}$ with a fixed direction $\mathbf{h}\in\R^d$; in this case, the same threshold as in the 1D setting holds \cite{LearShvydkoy2019}.  However, for general $\bu$, the term $(\n_x\cdot \bu)^2 - \tr((\n_x \bu)^2)$ does not vanish and is difficult to control (but c.f. \cite{LearShvydkoy2019} for the `almost unidirectional' case).  Partial results are available in 2D \cite{TT2014,HT2016}, for radial solutions \cite{tan2021eulerian}, and recently in higher dimensions \cite{Tadmor2021review}.

As for the asymptotic behavior,
Ha and Liu \cite{HaLiu2008} have shown that the Cucker--Smale dynamics
\eqref{e:CS} enjoy the \emph{flocking} property: when the
communication weight $\phi$ has a \emph{fat tail}, namely
\begin{equation}\label{eq:fattail}
  \int_1^\infty\phi(r)\dr=\infty,
\end{equation}
then the diameter of the positions of all agents remains
uniformly bounded in time, and moreover, the velocities of all agents
tend to a common value as time goes to infinity.  An analog of this
property is inherited by the Euler-alignment system \eqref{e:EA}, at
least for smooth solutions: it was proved in \cite{TT2014} that if
$\phi$ satisfies \eqref{eq:fattail}, then \emph{strong solutions must
flock}.

\subsection{Weak solutions and the non-uniqueness issue}
Though the theory of strong solutions to the Euler-alignment system has been well-developed over the last decade, little is known about weak solutions, even in one dimension.  This is a serious gap in the theory, as solutions can lose regularity even if they are initially smooth. Several natural questions arise when considering the possibility of weak solutions:
\begin{itemize}
  \item How does a solution evolve after the formation of a shock?
  \item Do weak solutions flock?
  \item How are weak solutions connected to the Cucker--Smale dynamics?
\end{itemize}
The global wellposedness theory we develop in this paper will address these points.

Let us remark that when the interaction is strongly singular and the initial data non-vacuous, due to instant regularization, solutions are smooth and the existing theory of strong solutions applies. Therefore, we shall focus on the case when the communication protocol $\phi$ is \emph{locally integrable}, so the interaction can be either regular or weakly singular. A theory of weak solutions is needed, especially for supercritical initial data.

It is not difficult to formulate a satisfactory definition of a
distributional weak solution of  the Euler-alignment system
\eqref{e:EA}. However, it is well-known that such solutions are not
unique. In fact, Carrillo et al. \cite{Carrilloetal2017weakEA} studied weak solutions
to Euler systems with a general class of nonlocal interactions and
showed that there exist infinitely many weak solutions that dissipate
the kinetic energy. What we need, therefore, is an additional \emph{selection
  principle} that will single out a unique weak solution.

The non-uniqueness issue is better understood for the 1D
pressureless Euler system
\begin{equation}
\label{e:PE}
\left\{ \begin{array}{rcl}
\p_t \rho + \p_x(\rho u) & = & 0,  \\
\p_t (\rho u) + \p_x(\rho u^2) & = &  0.
\end{array}
\right.
\end{equation}

A well-known class of entropy inequalities for \eqref{eq:PE}, introduced by Lax \cite{Lax1971Entropy}, requires that for any positive convex entropy $\eta$,
\begin{equation}\label{e:entropic}
  \p_t(\rho \eta(u)) + \p_x(\rho u \eta(u))\le 0.
\end{equation}
These entropy inequalities can be adapted to the 1D Euler-alignment system, by replacing $\eta(u)$ with $\eta(\psi)$, where $\psi$ is defined in \eqref{e:psidef}. However, the entropy inequalities \eqref{e:entropic} do not guarantee uniqueness. Bouchut \cite{Bouchut1994} showed there are infinitely many entropic solutions to the 1D pressureless Euler equations that satisfy \eqref{e:entropic}. He also presented an instructive example with atomic initial data: Consider a configuration where two
particles move towards each other with velocities $v_1>v_2$. One can impose different rules when they collide. As long as the collision preserves momentum, and the post-collision velocities $v_1', v_2' \in [v_2, v_1]$, this setup generates an entropic solution
for the 1D pressureless Euler equations. Two particular solutions are: (i) no collision: $v_i'=v_i, i=1,2$; (ii) completely inelastic collision: $v_1'=v_2'$. Hence, a stronger selection principle is required to obtain a unique solution.

\subsection{Sticky particle dynamics and selection principles}

Among all the collision rules, the completely inelastic collision dissipates the most energy. Since the post-collision velocities are the same, the two particles stick to each other and travel together after the collision.  This \emph{sticky particle} model was originally proposed by Zeldovich \cite{Zeldovich1969sb}; it generates atomic weak solutions to \eqref{e:PE}.  Grenier \cite{Grenier1995} also used the sticky particle dynamics to prove existence (but not uniqueness) of solutions to \eqref{e:PE}.  The sticky particle dynamics underlie at some level all of the successful selection principles that we discuss below for the theory of \eqref{e:PE} and related systems.  

One line of results on the theory of \eqref{e:PE} is related to a so-called \textit{generalized variational principle} due to E, Rykov, and Sinai \cite{ERykovSinai1996}, which is compatible with the sticky particle dynamics and can serve as a selection principle.  Huang and Wang \cite{HuangWang2001stickyunique} proved existence and uniqueness of weak solutions satisfying the \textit{one-sided Lipschitz condition}
\begin{equation}\label{eq:onesideLip}
\frac{u(x_2,t)-u(x_1,t)}{x_2-x_1}\le \frac{1}{t},\quad t>0. 
\end{equation}
Their construction is based on a \textit{generalized potential} and has strong ties to the variational principle of \cite{ERykovSinai1996} (c.f. also the earlier works \cite{WangDing1997,WangHuangDing1997}). The framework of \cite{HuangWang2001stickyunique} has been adapted to the 1D Euler-alignment system with all-to-all coupling \cite{HaHuangWang2014wk, Jin2016flockingdiss}, where the alignment interaction reduces to a linear, local damping. It is not clear whether this approach can be extended to the truly nonlocal case of a general communication protocol $\phi$; to our knowledge, this question has not been treated in the literature.  

Another type of selection principle for \eqref{e:PE} is  based on the entropy conditions for a related scalar equation.  This approach was pioneered by Brenier and Grenier \cite{brenier1998sticky}, who study \eqref{e:PE} by connecting it to the \emph{scalar conservation law}
\begin{equation}\label{eq:PE}
\p_t M + \p_x(A(M)) = 0,
\end{equation}
where $M$ is the cumulative distribution function of the density $\rho$, and the flux $A$ only depends on the initial data $\rho^0$ and $u^0$. The entropy conditions for \eqref{eq:PE} select a unique solution $M$, which determines a distinguished weak solution of the pressureless Euler equations through $\rho = \p_x M$, $\rho u = \p_x(A(M))$. Hence, the entropy inequalities for the scalar conservation law \eqref{eq:PE} serve as a satisfactory selection principle for \eqref{e:PE}. Moreover, the entropy solution of \eqref{eq:PE} has an elegant connection to the sticky particle dynamics:  with a discretized initial condition $M_N^0$ and flux $A_N$, the sticky particle dynamics generate the entropy solution of the scalar conservation law by tracking the locations of all the shocks of $M_N$.

Bouchut and James have developed a related but alternative theory of \textit{duality solutions} \cite{BouchutJames1998, BouchutJames1999} for solutions to \eqref{e:PE}.  Their theory relies on properties of monotone solutions to \eqref{eq:PE}, and they prove uniqueness under an assumption similar to (but stronger than) \eqref{eq:onesideLip}.  The framework of \cite{brenier1998sticky} has been successfully applied to the 1D Euler--Poisson equations \cite{BrenierGangboSavareWestdickenberg2013,nguyen2008pressureless,NguyenTudorascu2015}.  To handle the additional nonlocal Poisson force, a nonlocal scalar conservation law \eqref{eq:PE} is generated, with a time dependent flux $A=A(M,t)$. A similar argument also works for the 1D Euler-alignment system with all-to-all coupling \cite{Jin2015flockingdiss}.

In addition to the literature above, we also want to mention a very recent work of Amadori and Christoforou \cite{amadori2021bv} on the global existence and asymptotic behavior of weak solutions to the Euler-alignment system with all-to-all coupling and an additional pressure term.  Their approach is generated by front-tracking approximation but otherwise has little overlap with the works described above.

The sticky particle approach to the pressureless Euler equations and related systems has continued to garner attention, through the lens of optimal transport \cite{NatileSavare2009,BrenierGangboSavareWestdickenberg2013,CavallettiSedjroWestdickenberg2015, nguyen2008pressureless, NguyenTudorascu2015}, and also from a probabilistic perspective \cite{Dermoune1999, DermouneMoutsinga2003, Moutsinga2008, Hynd2019LagrangianSP, Hynd2020trajectory, Hynd2020probmeasureSP}.  An exhaustive review of the literature on the pressureless Euler system is far beyond the scope of this paper.  However, the approaches described above provide sufficient context for the ideas developed in our paper.

\subsection{Scalar balance laws for the Euler-alignment system}
We are interested in the 1D Euler-alignment system with general alignment interactions. 
\begin{equation}
\label{eq:1DEA}
\left\{
\begin{array}{rcl}
\p_t \rho + \p_x (\rho u) & = & 0, \\
\p_t (\rho u) + \p_x (\rho u^2) & = & \rho (\phi*(\rho u)) - \rho u (\phi*\rho).
\end{array}
\right.
\end{equation}
The major challenge is to appropriately treat the nonlocality from the alignment interactions. Unfortunately, the system cannot be connected to a scalar conservation law of the type \eqref{eq:PE}, except for special cases, e.g. constant $\phi$.

We introduce a new scalar \emph{balance} law connected to the 1D Euler-alignment system:
\begin{equation}
  \label{e:balancelaw}
  \p_t M + \p_x(A(M)) = (\phi*M) \p_x M.
\end{equation}
The contribution from the alignment interaction is split between the flux term and the nonlinear, nonlocal right-hand side of \eqref{e:balancelaw}. Following the framework of Brenier and Grenier, we establish a global wellposedness theory for \eqref{e:balancelaw} and show that there is a unique entropy solution. We then construct a unique weak solution to the 1D Euler-alignment system \eqref{eq:1DEA}, using the entropy conditions for \eqref{e:balancelaw} as our selection principle. The intrinsic nonlocality embedded in \eqref{e:balancelaw} requires a significant advancement of the analytical techniques used for \eqref{eq:PE}, in terms of both the global wellposedness theory and (more significantly) the precise connection with \eqref{eq:1DEA}.  For example, we prove that solutions $M$ of \eqref{e:balancelaw} are stable with respect to perturbations of the initial data $M^0$ and the flux $A$.  Our bound is the same as the one for scalar conservation laws (c.f. \cite[Theorem 3]{Lucier1986}); however, we need to use monotonicity of $M(\cdot, t)$ (built into our definition of entropy solution) in an essential way to treat the nonlocal term, whereas this assumption is not needed to treat \eqref{eq:PE}.  Another distinctive feature of \eqref{e:balancelaw} is that $\p_x(A(M))$ does \emph{not} represent the momentum $\rho u$; rather, it is equal to $\rho \psi$. We use the relation \eqref{e:psidef} to recover the momentum. Finally, when $\phi$ is merely locally integrable, the fact that $\phi*M$ may not be differentiable presents challenges in both the existence and stability proofs: The term corresponding to $(\phi*M)\p_x M$ in our definition of entropy solution is the most subtle with respect to convergence of the discretization.  Furthermore, it necessitates an additional application of the BV chain rule and the use of a delicate cancellation in our uniqueness and stability argument.

We also explore the connection between \eqref{e:balancelaw} and the \emph{sticky particle Cucker--Smale} dynamics, introduced in Section \ref{s:Sticky}.  We show that the entropy solution of \eqref{e:balancelaw} can be constructed through an approximation by a sequence of solutions to the sticky particle Cucker--Smale dynamics. Ultimately, our approach yields a uniquely determined solution to \eqref{eq:1DEA}, through a discrete approximation by sticky particle Cucker--Smale dynamics, for measure-valued data.  Under additional assumptions on the initial conditions and protocol $\phi$, we obtain an error estimate for the approximation, with an explicit convergence rate of up to $O(N^{-1})$. The rate echoes the
work of Lucier \cite{Lucier1986} on scalar conservation laws. 
This paves a clean path for the numerical implementation of our
solution through the sticky particle approximation.

Finally, we study the asymptotic behavior of our constructed solution to \eqref{eq:1DEA}. Applying uniform estimates on the sticky particle Cucker--Smale dynamics and the convergence result of the approximation,  we establish the same flocking property that is enjoyed by strong solutions: If $\phi$ satisfies \eqref{eq:fattail}, our \emph{weak solutions must flock}.


\subsection{Main results and structure of the paper}
We study the following three systems and their connections. The sticky particle collision rule in the Cucker--Smale dynamics corresponds to the entropy conditions for the scalar balance law, which in turn serves as the selection principle to the unique weak solution of the 1D Euler-alignment system.

\begin{center}
\tikzstyle{block} = [rectangle, draw, text width=4cm, text centered, rounded corners, minimum height=3em]
\begin{tikzpicture}
  \node [block] (CS) at (0,0) {Cucker--Smale dynamics \eqref{e:CS}\\ \bigskip
    \textit{Sticky particle}};
  \node [block, right of=CS, node distance=5.5cm] (BL) {Scalar balance\\
    law \eqref{e:balancelaw} \\ \bigskip \textit{Entropy solution}};
  \node [block, right of=BL, node distance=5.5cm] (EA) {1D
    Euler-alignment system \eqref{eq:1DEA}\\ \bigskip \textit{Unique weak
      solution}};
  \draw (-1.8,-.25) -- (1.8,-.25);
  \draw (3.7,-.25) -- (7.3,-.25);
  \draw (9.2,-.25) -- (12.8,-.25);
  \path[draw, <->] (CS) --  (BL);
  \path[draw, <->] (BL) --  (EA);
\end{tikzpicture}
\end{center}

Our results are summarized in the following points.
The explicit statements appear in the text.


\begin{itemize}
	\item \textbf{Existence, Uniqueness, and Stability.}
	\begin{itemize}[leftmargin = 5 mm]
		\item For monotone initial data $M^0$ and Lipschitz flux $A$, the scalar balance law \eqref{e:balancelaw} has a unique entropy solution $M$, which is $BV$ in space and time (Theorem~\ref{t:M}(a)).  The solution is stable under perturbations of both $M^0$ and $A$ (Theorem~\ref{t:M}(c)).
		\item Given $\rho^0\in \cP_c(\R)$ (a compactly supported probability measure) and $u^0\in L^\infty(\dd\rho^0)$, a unique solution of \eqref{eq:1DEA} can be generated from the unique entropy solution of \eqref{e:balancelaw} with corresponding initial data $M^0$ and flux $A$ (Theorem~\ref{def:solution}, the \textit{entropic selection principle}).
	\end{itemize}
	\item \textbf{Discretization and Approximability.}
	\begin{itemize}[leftmargin = 5 mm]
		\item The sticky particle Cucker--Smale dynamics    determine the entropy solution of the scalar balance law \eqref{e:balancelaw} for discretized initial data $M_N^0$ and flux $A_N$ (Theorem \ref{t:discr}).
		\item The sticky particle Cucker--Smale dynamics approximate the unique solution of \eqref{e:balancelaw} and \eqref{eq:1DEA} for general initial data (Theorems \ref{t:M}(b) and \ref{t:SPconv}), with an explicit convergence rate depending on $\phi$ and $u^0$ (Theorem~\ref{t:approxrate}).
	\end{itemize}
	\item \textbf{Long-Time Behavior and Flocking.}
	\begin{itemize}[leftmargin = 5 mm]
		\item Our constructed weak solution to the 1D Euler-alignment system \eqref{eq:1DEA} exhibits the flocking phenomenon (Theorem~\ref{t:EAflocking}): If $\phi$ which decays slowly enough, the velocity $u$ converges to a constant, and $\rho$ converges to a traveling wave $\rho_\infty$.
	\end{itemize}	
\end{itemize}

\subsubsection*{Outline of the paper}
In Section \ref{s:preliminaries}, we give a formal derivation of the scalar balance law \eqref{e:balancelaw} from the Euler-alignment system and derive the associated Rankine--Hugoniot condition and Oleinik entropy condition.  In Section \ref{s:Sticky}, we discuss the properties of the sticky particle Cucker--Smale dynamics that are needed for our wellposedness theory. 
In Section \ref{s:discbalancelaw}, we connect the sticky particle Cucker--Smale dynamics to the entropy solution of the discretized scalar balance law \eqref{e:balancelaw}. In Section \ref{s:M}, we present the wellposedness theory for entropy solutions of \eqref{e:balancelaw} and prove the convergence of the sticky particle approximation. In Section \ref{s:EArecovery}, we establish rigorously the connection between solutions to the scalar balance law \eqref{e:balancelaw} and the 1D Euler-alignment system \eqref{eq:1DEA}, construct a unique weak solution, and study the approximation by the sticky particle Cucker--Smale dynamics. Finally, in Section \ref{s:MoreProps}, we study the asymptotic flocking behavior. 

\section{Derivation of the scalar balance law and entropy conditions}\label{s:preliminaries}

\subsection{The scalar balance law}
\label{ss:formalderiv}

We give a formal derivation of the scalar balance law \eqref{e:balancelaw} from the 1D Euler-alignment system \eqref{eq:1DEA}, assuming all functions involved are as regular as necessary. Rigorous justification of the equivalence between the two systems will be made in Section \ref{s:EArecovery}.

We begin by reformulating the 1D Euler-alignment system as follows. Integrating \eqref{eq:e} yields
\begin{equation}\label{eq:psi}
  \p_t\psi+u\p_x\psi=0,
\end{equation}
where $\psi = u + \Phi*\rho$ as in \eqref{e:psidef}, and 
\begin{equation}
\label{e:Phidef}
\Phi(x) = \int_0^x \phi(y)\dy.
\end{equation}
The 1D Euler-alignment system can then be expressed in terms of the pair $(\rho, \rho \psi)$:
\begin{equation}
\label{e:rhorhopsi}
\left\{ \begin{array}{rcl}
\p_t \rho + \p_x (\rho u) & = & 0, \\
\p_t (\rho \psi) + \p_x (\rho \psi u) & = & 0.
\end{array} \right.  
\end{equation}
The velocity can be recovered from $\psi$ and $\rho$ via the relation \eqref{e:psidef}:
\begin{equation}
\label{e:urhorhopsi}
u = \psi - \Phi * \rho.
\end{equation}
Since $\rho$ and $\rho \psi$ satisfy the same continuity equation
\eqref{e:rhorhopsi}, their primitives
\begin{equation}\label{eq:MQ}
M(x,t) = - \frac12 + \int_{(-\infty,x]} \rho(y,t)\dy,
\qquad Q(x,t) = \int_{(-\infty,x]} \rho \psi(y,t)\dy
\end{equation}
will satisfy the transport equations
\[
\p_t M + u\p_x M  =  0, \qquad
\p_t Q + u\p_x Q  =  0.
\]

Let $A$ be a map (depending only on the initial data $(M^0, Q^0)$) such that $A\circ M^0 = Q^0$. If $(x,t)$ lies on a characteristic path originating at $(x^0,0)$ and governed by the velocity field $u$, then 
\[
Q(x,t) = Q^0(x^0)=A(M^0(x^0))=A(M(x,t)).
\]
Using this identity $Q = A(M)$, as well as the definitions of $\rho$ and $\psi$, we can write
\[
u\p_x M  = \rho \psi - \rho \Phi*\rho = \p_x Q - \p_x M\cdot (\phi*M)  = \p_x (A(M)) - \p_x M \cdot (\phi*M),
\]
which leads to the scalar balance law \eqref{e:balancelaw}.

\begin{REMARK}
\label{r:conv}
We include a shift of $-\tfrac12$ in the definition of $M$ in \eqref{eq:MQ} to make sense of the convolution $\phi*M$. We will always work with $\rho$ such that $\rho(t)$ is supported in a compact interval $[-R(T), R(T)]$ for any $t\in [0,T]$, so that $M(\pm x,t)=\pm\frac12$ for any $x\geq R(T)$. For $R\ge R(T)$, we define
\begin{equation}\label{eq:phiM}
\phi*M(x,t) = \int_{-2R}^{2R} \phi(z)M(x-z,t)\dz,\quad\text{for all}\,\,x\in [-R,R].
\end{equation}
Since $\phi$ is even, the choice of $R$ is inconsequential to the value of $\phi*M$; consequently, the definition \eqref{eq:phiM} defines $\phi*M(x,t)$ for all $x\in \R$, $t\in [0,T]$. Moreover,
\[
\phi*M(x,t) = \Big.\Phi(y)M(x-y)\Big]_{y=-2R}^{y=2R}+\int_{-2R}^{2R} \Phi(y)\rho(x-y,t)\dy = \Phi*\rho(x,t),
\]
for all $x\in [-R,R]$. Both boundary terms take the value $\tfrac12\Phi(2R)$ and hence cancel with each other.

Finally, it is easy to see that $\phi\ast M$ is bounded (by $\Phi(2R)$) and continuous (since $\Phi$ is).  
\end{REMARK}

\subsection{Entropy conditions to the scalar balance law}
The scalar balance law \eqref{e:balancelaw} admits a natural admissibility criterion via entropy inequalities. Let $\eta:[-\tfrac12,\tfrac12]\to \R$ be a convex and Lipschitz function and suppose $q:[-\tfrac12,\tfrac12]\to \R$ satisfies $q' = \eta' A'$. The pair $(\eta, q)$ is known as an entropy/entropy-flux pair, and the entropy inequality for \eqref{e:balancelaw} associated to each such pair reads
\begin{equation} 
\label{e:entropyineq}
\p_t\big(\eta(M)\big) + \p_x\big(q(M)\big) \le (\phi*M)\,\p_x\big(\eta(M)\big),
\end{equation}
in the sense of distributions. That is, for any nonnegative test function $\zeta\in C_c^\infty(\R\times (0,T))$, we require
\begin{equation}
\label{e:entropyineqdist}
\int_0^{T} \int_\R \Big[\eta(M) \p_t \zeta + q(M)\p_x \zeta + \zeta (\phi*M)\p_x (\eta(M)) \Big] \dx \dt
\ge 0.
\end{equation}

Let us comment on the last term in \eqref{e:entropyineqdist}. When $\phi$ is bounded and Lipschitz, it is easy to check that $\phi\ast M$ is Lipschitz. Therefore, the integral can be realized as
\[
\int_0^{T} \int_\R \zeta (\phi*M)\p_x (\eta(M)) \dx \dt = -
\int_0^{T} \int_\R \Big[\p_x\zeta (\phi*M) + \zeta \p_x(\phi*M)\Big] \eta(M) \dx \dt.
\]
For general locally integrable communication protocol $\phi$, note that $\eta(M)\in BV_{\loc}(\R)$ (e.g. by Lemma~\ref{l:BVchainrule}) and is constant outside of $[-R,R]$. Hence, $\p_x(\eta(M))$ is a Radon measure with support in $[-R,R]$.  Since $\phi\ast M\in C_b([-R,R])$, it follows that the integral is well-defined.

\begin{DEF}
\label{d:entropysoln}
We say $M : \R \times [0,T] \to [-\frac12,\frac12]$ is an \textit{entropy
  solution} to the scalar balance law \eqref{e:balancelaw} if
\begin{itemize}
  \item The entropy inequality \eqref{e:entropyineq} is satisfied for
    every entropy/entropy flux pair $(\eta, q)$.
  \item $M(\cdot,t)$ is nondecreasing, for any $t\in[0,T]$.
  \item There exists an $R(T)>0$, such that $M(\pm
    x,t)=\pm\frac12$, for any $x\ge R(T)$ and $t\in[0,T]$.
 \end{itemize}
We say $M : \R \times [0,+\infty) \to [-\frac12,\frac12]$ is an entropy solution if its restriction to any compact time interval $[0,T]$ is an entropy solution in the sense above.
\end{DEF}

By a standard approximation argument, one deduces that the collection of all entropy/entropy flux pairs in Definition \ref{d:entropysoln} may be replaced a smaller class. The entropy solution can be equivalently defined if the entropy inequality \eqref{e:entropyineq} is satisfied for every \emph{Kruzkov entropy pair} 
\begin{equation}
  \label{e:Kruzkovpair}
  \eta(m) = |m - \a|, \qquad q(m) = \sgn{m - \a}(A(m) - A(\a)), \qquad
  \a\in [-\tfrac12,\tfrac12].
\end{equation}
We will use the definition \eqref{e:Kruzkovpair} in the proofs of existence and uniqueness of entropy solutions to \eqref{e:balancelaw}.

Next, we state the Rankine--Hugoniot condition and the Oleinik entropy condition for \eqref{e:balancelaw}. Since the nonlocal term on the right-hand side of \eqref{e:entropyineq} plays a role in the conditions, we shall outline the derivation.

Suppose $M$ takes the values $M_\ell$ and $M_r$
on two sides of a shock along a curve $C = \{(x,t):x = s(t)\}$.  We denote by $\s(t) = \dot{s}(t)$ the speed of the shock. The entropy condition \eqref{e:entropyineqdist} becomes
\[
  \int_C \bigg([[\eta(M)]]\nu_t(s) + \Big([[q(M)]] - (\phi\ast M)[[\eta(M)]]\Big)\nu_x(s)\bigg)\zeta\ds 
  \ge 0,
\]
where $\nu = (\nu_x, \nu_t) = (1+\s^2)^{-\frac12}(1,-\s)$ is the unit normal vector along $C$, and we have used the notation $[[\eta(M)]]=\eta(M_\ell) - \eta(M_r)$ and $[[q(M)]]=q(M_\ell) - q(M_r)$. This implies
\[ 
(\s + (\phi*M))[[\eta(M)]] \le [[q(M)]],\qquad \text{along}\,\,C.
\]
Taking $\eta = \id$ and $q = A$, and noting that equality should hold
in \eqref{e:entropyineqdist}, we get the Rankine--Hugoniot condition
\begin{equation}
\label{e:RH}
\s +\phi*M = \frac{[[A(M)]]}{[[M]]}.
\end{equation}
For $M_\ell<M_r$, we take
$\eta(m) = (m-\theta) H(m-\theta)$ and $q(m)=(A(m)-A(\theta))H(m-\theta)$
for $\theta\in(M_\ell, M_r)$. Here, $H$ denotes the Heaviside function.
This leads to the \emph{Oleinik entropy condition}
\begin{equation}
\label{e:Oleinik}
\s + \phi*M\le \frac{A(\theta) - A(M_\ell)}{\theta - M_\ell},
\qquad \theta\in (M_\ell, M_r).
\end{equation}

\section{The sticky particle Cucker--Smale dynamics}
\label{s:Sticky}


\subsection{Definitions and notation}
\label{ss:DefNotSP}

Consider a system of particles of positive masses $(m_i)_{i=1}^N$ and
configurations $(x_i(t), v_i(t))_{i=1}^N$ following the Cucker--Smale
dynamics \eqref{e:CS} in one dimension
\begin{equation}
\label{eq:CS}
\frac{\dd x_i}{\dd t} = v_i,\qquad
\frac{\dd v_i}{\dd t} =  \sum_{\substack{j=1\\x_j\ne x_i}}^N m_j \phi(x_j - x_i) (v_j - v_i).
\end{equation}
We always assume the total mass is 1:
\begin{equation}
\label{e:mass1discrete}
\sum_{i=1}^N m_i = 1.
\end{equation}
The system \eqref{eq:CS}--\eqref{e:mass1discrete} allows particles to pass through each other.  We propose a modified system: the \textit{sticky particle Cucker--Smale dynamics}, which follows \eqref{eq:CS} except at times when two or more particles \emph{collide}, i.e., occupy the same position (for the first time). We insist that colliding particles remain stuck together for all future times.  We refer to the unmodified system \eqref{eq:CS}--\eqref{e:mass1discrete} as the \textit{Cucker--Smale dynamics without collisions}, in order to emphasize the distinction between the two sets of dynamics.

Let us now define precisely the collision rules for the sticky particle Cucker--Smale dynamics.  We fix the notation $J_i(t)$ to represent the set of indices $j$ such that particle $j$ is stuck to particle $i$ at time $t$:
\[ 
J_i(t):=\big\{j\in 1, \ldots, N:x_j(t) = x_i(t)\big\}. 
\]
\medskip
The collision rules have two ingredients:
\begin{itemize}
  \item Each collision is \emph{completely inelastic}, and particles stick to each other after collisions:
    \begin{equation}
      \label{e:stick}
      J_i(t) \supseteq  J_i(s), \quad \text{ whenever }\,\, t\ge s\ge 0;
    \end{equation}
  \item The collision conserves momentum:
    \begin{equation}
      \label{e:momentumconscollision}
      v_i(t+) = \frac{\sum_{j\in J_i(t)} m_j v_j(t-)}{\sum_{j\in J_i(t)} m_j}.
    \end{equation}
  \end{itemize}
We will always index the particles in increasing order from left to right:
\[
x_1(t)\le x_2(t)\le \cdots \le x_N(t).
\]
Since the collision rules do not allow particles to cross each other, the order stays unchanged for all time.  It will be convenient to set notation for lowest and highest indices in a given $J_i(t)$:
\[
i_*(t) = \min J_i(t), 
\qquad 
i^*(t) = \max J_i(t).
\] 

A time $t$ is called a \emph{collision time} if the cardinality of one or more of the $J_i$'s increases at time $t$.  At collision times, we make the convention that the $v_i$'s are right continuous, i.e., $v_i(t) = v_i(t+)$.  It is clear that at most $N-1$ collisions can occur.  Thus global-in-time existence and uniqueness of the sticky particle Cucker--Smale dynamics is a triviality.  

\subsection{Basic properties for the sticky particle Cucker--Smale dynamics}
We begin by stating the maximum principle on $(v_i)_{i=1}^N$, which is well-known for the Cucker--Smale dynamics without collisions.

\begin{PROP}
  Suppose $(x_i(t),v_i(t))_{i=1}^N$ follows the sticky particle Cucker--Smale dynamics associated to the data $(x_i^0,v_i^0, m_i)_{i=1}^N$. Then
  \begin{equation}
  \label{e:MPv}
    \min_{1\le j\le N} v_j(s) \leq  v_i(t)\leq \max_{1\le j\le N} v_j(s),\quad
    \text{for any}\,\, i=1,\ldots,N,\quad t\ge s\ge 0.
  \end{equation}
  Consequently, if $\{x_i^0\}_{i=1}^N\subset [-R^0,R^0]$, then
  \begin{equation}\label{e:xbound}
    \{x_i(t)\}_{i=1}^N\subset [-R(t),R(t)],\quad R(t):=R^0+
    t\max_{1\le j\le N}|v_j^0|,\quad \text{for all}\,\,t\geq0.
  \end{equation}
\end{PROP}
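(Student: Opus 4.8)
The plan is to prove the two conclusions in sequence: first the maximum principle \eqref{e:MPv} for the velocities, and then the spatial containment \eqref{e:xbound} as an easy corollary. For \eqref{e:MPv} the strategy is to track the extremal velocities $\overline{v}(t) := \max_i v_i(t)$ and $\underline{v}(t) := \min_i v_i(t)$ and show that $\overline{v}$ is nonincreasing and $\underline{v}$ is nondecreasing on $[0,\infty)$; combined with right-continuity this gives, for $t \ge s$, that $v_i(t) \le \overline{v}(t) \le \overline{v}(s)$ and symmetrically for the lower bound. Between collision times the dynamics are the smooth Cucker--Smale ODE's \eqref{eq:CS}, so one argues as in the classical maximum principle: if $i$ realizes the maximum at some time $t$ (in an interval free of collisions), then $\frac{\dd v_i}{\dd t} = \sum_{j : x_j \ne x_i} m_j \phi(x_j - x_i)(v_j - v_i) \le 0$ since each $m_j \ge 0$, $\phi \ge 0$, and $v_j - v_i \le 0$. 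Making this rigorous at the level of the Lipschitz function $\overline{v}$ is standard (e.g.\ via a Gr\"onwall/Rademacher argument, or by noting $\overline{v}$ is a max of finitely many $C^1$ functions and computing its right derivative), and shows $\overline{v}$ is nonincreasing on each inter-collision interval.

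The one genuinely new point — the main obstacle — is to check that the extremal velocities do not jump at a collision time. Suppose $t$ is a collision time and $i$ is any particle involved in a collision. By the momentum conservation rule \eqref{e:momentumconscollision}, the post-collision velocity $v_i(t+)$ is a convex combination (with weights $m_j / \sum_{k \in J_i(t)} m_k$, which are nonnegative and sum to $1$) of the pre-collision velocities $\{v_j(t-) : j \in J_i(t)\}$. Hence $v_i(t+) \le \max_{j \in J_i(t)} v_j(t-) \le \overline{v}(t-)$, and likewise $v_i(t+) \ge \underline{v}(t-)$; particles not involved in any collision have $v_i(t+) = v_i(t-)$ trivially. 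Therefore $\overline{v}(t+) \le \overline{v}(t-)$ and $\underline{v}(t+) \ge \underline{v}(t-)$, so the monotonicity survives across collision times. Since there are at most $N-1$ collision times, patching the inter-collision monotonicity with these one-sided jump inequalities yields that $\overline{v}$ is nonincreasing and $\underline{v}$ nondecreasing on all of $[0,\infty)$, which is exactly \eqref{e:MPv}.

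For \eqref{e:xbound}, note that each $x_i$ is Lipschitz in $t$ with $\frac{\dd x_i}{\dd t} = v_i(t)$ for a.e.\ $t$ (the $x_i$ remain continuous even at collision times, only the $v_i$ jump), so $x_i(t) = x_i^0 + \int_0^t v_i(\tau)\,\dd\tau$. Applying \eqref{e:MPv} with $s = 0$ gives $|v_i(\tau)| \le \max_{1\le j \le N} |v_j^0|$ for all $\tau \ge 0$, whence $|x_i(t)| \le R^0 + t \max_j |v_j^0| = R(t)$, establishing the containment. I expect the write-up to be short; essentially all the work is in the bookkeeping for the convex-combination argument at collision times and in phrasing the inter-collision maximum principle cleanly.
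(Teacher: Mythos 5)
Your proposal is correct and follows essentially the same two-step strategy as the paper's proof: a classical maximum-principle argument on collisionless intervals combined with the observation that the collision rule \eqref{e:momentumconscollision} is a convex average (hence preserves the extremal bounds), after which \eqref{e:xbound} follows by integrating the velocity. The paper states this in three terse sentences; you have simply filled in the details.
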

\begin{proof}
On collisionless time intervals, the bounds \eqref{e:MPv} follow immediately from the dynamics of $v_i$ in \eqref{eq:CS}. It is also clear that the collision rule \eqref{e:momentumconscollision} respects the maximum principle \eqref{e:MPv}.  The assertion \eqref{e:xbound} follows immediately from \eqref{e:MPv}.  
\end{proof}

\begin{REMARK}
A nondegenerate alignment forcing in the velocity equation will often decrease the best possible $R(t)$ in \eqref{e:xbound}.  In fact, under appropriate assumptions on the size and support of $\phi$, the radius $R(t)$ may be chosen independent of $t$, and each $v_i(t)$ will converge to the average $\bar{v}=\sum_i m_i v_i^0$ as $t\to\infty$.  These phenomena are known as \emph{flocking} and \emph{velocity alignment} and are well-known for the Cucker--Smale dynamics without collisions, c.f. \cite{HaLiu2008}.  We will discuss these properties in the context of the sticky particle Cucker--Smale dynamics and the Euler-alignment system in Section \ref{s:MoreProps}.
\end{REMARK}

We now consider some properties of the sticky particle Cucker--Smale dynamics that are unique to the 1D setting.  These properties are connected to the quantity
\begin{equation}\label{eq:psiit}
  \psi_i(t) = v_i(t) + \sum_{j=1}^N m_j \Phi(x_i(t) - x_j(t)),
\end{equation}
where $\Phi$ is the odd antiderivative of $\phi$, defined in \eqref{e:Phidef}. The quantity $\psi_i$ is a discrete analog of the macroscopic $\psi$ from  \eqref{e:psidef}. It plays the same role in many respects that the particle velocities $v_i$ do in the absence of alignment force. Indeed, $\psi_i = v_i$ in the degenerate case where $\phi \equiv 0$. The quantities $\psi_i$ have been used to study the 1D Cucker--Smale dynamics without collisions \cite{HaParkZhang2018,HaKimParkZhang2019}.  In the 1D sticky particle setup, they play a crucial role in the analysis of collisions.  We list two basic properties below.

\begin{PROP}
  The quantities $\psi_i$ have the following properties.
  \begin{itemize}
  \item [(a)] Each $\psi_i(t)$ is constant in time in the absence of collisions; for any $t$ that is not a collision time, 
    \begin{equation}
      \label{e:psiidot=0}
      \frac{\dd}{\dt}\psi_i(t) = 0.
    \end{equation}
  \item [(b)]  If $t$ is a collision time, we have
    \begin{equation}
      \label{e:psicollision}
      \psi_{i}(t+) = \frac{\sum_{j\in J_i(t)} m_{j}\psi_{j}(t-)}{\sum_{j\in J_i(t)} m_j}.
    \end{equation}
      \end{itemize}
\end{PROP}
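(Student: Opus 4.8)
The plan is to verify the two statements by direct computation, exploiting the structure of the Cucker--Smale dynamics and the collision rules. For part (a), I would fix a collisionless time interval and differentiate $\psi_i(t) = v_i(t) + \sum_{j=1}^N m_j \Phi(x_i(t)-x_j(t))$ term by term. Using the velocity equation in \eqref{eq:CS}, $\dot v_i = \sum_{j:\,x_j\ne x_i} m_j \phi(x_j-x_i)(v_j-v_i)$, and the chain rule, $\frac{\dd}{\dt}\Phi(x_i-x_j) = \phi(x_i-x_j)(v_i-v_j)$ (noting $\Phi'=\phi$). Since $\phi$ is even, $\phi(x_i-x_j)=\phi(x_j-x_i)$, so $\sum_j m_j \phi(x_i-x_j)(v_i-v_j)$ exactly cancels $\dot v_i$, giving $\frac{\dd}{\dt}\psi_i = 0$. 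The only subtlety is the terms with $x_j(t)=x_i(t)$: on a collisionless interval, any such $j$ was already stuck to $i$ before the interval began, so $x_i-x_j\equiv 0$ and $v_i-v_j\equiv 0$ on the whole interval (the stuck cluster moves rigidly after the momentum-averaging collision), and these terms contribute zero to both sums. So the cancellation is clean once one is careful about the stuck indices.

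For part (b), fix a collision time $t$ and a particle $i$. The key point is that $\psi_j$ is continuous through the collision \emph{for the convolution part} but the velocity $v_j$ jumps. Writing $\psi_i(t\pm) = v_i(t\pm) + \sum_{j=1}^N m_j \Phi(x_i(t)-x_j(t))$ — the positions are continuous in $t$, so the sum $\sum_j m_j \Phi(x_i(t)-x_j(t))$ takes the same value at $t-$ and $t+$, call it $\Phi_i$. Thus $\psi_i(t+) - \psi_i(t-) = v_i(t+) - v_i(t-)$, and more usefully, for $j\in J_i(t)$ we have $x_j(t)=x_i(t)$, hence $\Phi_j = \Phi_i$ for every $j\in J_i(t)$ (since $\Phi(x_j(t)-x_k(t))=\Phi(x_i(t)-x_k(t))$). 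Now take the mass-weighted average over $j\in J_i(t)$:
\[
\frac{\sum_{j\in J_i(t)} m_j \psi_j(t-)}{\sum_{j\in J_i(t)} m_j}
= \frac{\sum_{j\in J_i(t)} m_j v_j(t-)}{\sum_{j\in J_i(t)} m_j} + \Phi_i
= v_i(t+) + \Phi_i = \psi_i(t+),
\]
where the middle equality is the momentum-conservation rule \eqref{e:momentumconscollision} and the fact that $\Phi_j=\Phi_i$ for all $j$ in the cluster pulls the common value $\Phi_i$ out of the average.

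The main thing to be careful about — the only real "obstacle," though it is minor — is bookkeeping around which indices are stuck at which times: one must note that $J_i(t)$ is the cluster formed at this collision (possibly the union of several previously-separate clusters), that positions are continuous across $t$ so the $\Phi$-sums match on both sides, and that $v_i(t+)$ is by convention the common post-collision velocity of the whole cluster $J_i(t)$, so it equals $v_j(t+)$ for all $j\in J_i(t)$ and the averaging identity \eqref{e:momentumconscollision} applies uniformly across the cluster. With that observation in hand, both (a) and (b) reduce to the evenness of $\phi$ (for the cancellation) and the definition of the collision rule (for the averaging), with no estimates required.
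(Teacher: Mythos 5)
Your proposal is correct and follows essentially the same route as the paper: part (a) by direct differentiation using $\Phi'=\phi$, evenness of $\phi$, and the observation that stuck particles contribute nothing; part (b) by continuity of positions across the collision, the identity $\Phi(x_j(t)-x_k(t))=\Phi(x_i(t)-x_k(t))$ for $j\in J_i(t)$ allowing the convolution sum to be pulled through the mass-weighted average, and the momentum-conservation rule \eqref{e:momentumconscollision}. Nothing to add.
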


Equation \eqref{e:psiidot=0} is verified by differentiating the formula for $\psi_i(t)$ and using \eqref{eq:CS}. It is a discrete analog of \eqref{eq:psi}. To prove \eqref{e:psicollision}, we apply the collision rule \eqref{e:momentumconscollision} and use the continuity of the trajectories $x_i(t)$:
\begin{align*}
\psi_i(t+)
& = v_i(t+) + \sum_{k=1}^N m_k \Phi(x_i(t) - x_k(t)) \\
& = \frac{\sum_{j\in J_i(t)} m_j v_j(t-)}{\sum_{j\in J_i(t)} m_j}
+ \frac{\sum_{j\in J_i(t)} m_j \left(  \sum_{k=1}^N m_k \Phi(x_j(t) - x_k(t)) \right)}{\sum_{j\in J_i(t)} m_j} 
= \frac{\sum_{j\in J_i(t)} m_j\psi_j(t-)}{\sum_{j\in J_i(t)} m_j}.
\end{align*}

\subsection{The Barycentric Lemma}

We are now in a position to state the following \emph{barycentric} lemma, which is the key to connecting the sticky particle dynamics with the entropy solutions of \eqref{e:balancelaw}. A similar argument has been implemented in \cite[Lemma 2.2]{brenier1998sticky} for the pressureless Euler equation, when $\phi \equiv 0$.  

\begin{LEMMA}
  \label{c:barycentric}
  Fix an $i\in \{1, \ldots, N\}$ and a time $t>0$. For any $k\in
  J_i(t)$, we have
  \begin{equation}
    \label{e:barycentric}
    \frac{\sum_{j=i_*(t)}^k m_j \psi_j(t-)}{\sum_{j=i_*(t)}^k m_j} \ge \frac{\sum_{j\in J_i(t)}  m_j \psi_j(t-)}{\sum_{j=J_i(t)} m_j} = \psi_i(t+) \ge \frac{\sum_{j=k}^{i^*(t)} m_j \psi_j(t-)}{\sum_{j=k}^{i^*(t)} m_j}.
  \end{equation}
\end{LEMMA}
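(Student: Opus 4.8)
The plan is to show that the finite sequence $\big(\psi_j(t-)\big)_{j\in J_i(t)}$, read in increasing order of $j$, is nonincreasing. Granting this, \eqref{e:barycentric} becomes an elementary statement about weighted means: if $a_{i_*}\ge a_{i_*+1}\ge\cdots\ge a_{i^*}$ and $m_j>0$, and $\bar a$ is the total weighted mean, then the partial sums $\sum_{j=i_*}^{k}m_j(a_j-\bar a)$ are all $\ge0$ (they start as sums of nonnegative terms and, once the terms become nonpositive, equal $-\sum_{j>k}m_j(a_j-\bar a)\ge0$); hence every prefix mean is $\ge\bar a$ and, by subtraction, every suffix mean is $\le\bar a$. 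I would apply this with $a_j=\psi_j(t-)$, using that $\bar a=\psi_i(t+)$ — which is precisely the collision rule \eqref{e:psicollision}, and holds trivially when $t$ is not a collision time, since the $\psi_j(t-)$ with $j\in J_i(t)$ then all coincide.

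To prove the monotonicity of $j\mapsto\psi_j(t-)$ on $J_i(t)$, I may assume $t$ is a collision time at which $J_i(t)$ strictly enlarges (otherwise the claim is vacuous). Choose $\delta>0$ so that $(t-\delta,t)$ contains no collision time — possible because at most $N-1$ collisions occur. On this interval the particles indexed by $J_i(t)$ are partitioned into fixed clusters; by \eqref{e:stick} and continuity of the trajectories, each such cluster is a block of consecutive indices lying entirely inside $J_i(t)$, and there are $p\ge2$ of them, say $C_1,\dots,C_p$ listed from left to right. Stuck particles share a position, a velocity, and hence a value of $\psi$, and $\psi_j$ is constant in the absence of collisions by \eqref{e:psiidot=0}; thus there are numbers $\Psi_1,\dots,\Psi_p$ with $\psi_j(t-)=\Psi_\ell$ for all $j\in C_\ell$, and it suffices to prove $\Psi_1\ge\cdots\ge\Psi_p$.

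Fix $\ell$ and write $y_\ell(s)<y_{\ell+1}(s)$ for the positions of $C_\ell$ and $C_{\ell+1}$ on $(t-\delta,t)$; both tend as $s\to t^-$ to the common collision site $X:=x_i(t)$, so $g(s):=y_{\ell+1}(s)-y_\ell(s)$ is positive on $(t-\delta,t)$ and satisfies $g(t)=0$. Since $\dot x_j=v_j$ and, by \eqref{eq:psiit}, $v_j=\psi_j-\sum_k m_k\Phi(x_j-x_k)$ — a continuous function of $s$ on $(t-\delta,t)$, so the trajectories are $C^1$ there — the gap obeys
\begin{equation*}
\dot g(s)=\big(\Psi_{\ell+1}-\Psi_\ell\big)-\sum_{k=1}^N m_k\big[\Phi\big(y_{\ell+1}(s)-x_k(s)\big)-\Phi\big(y_\ell(s)-x_k(s)\big)\big].
\end{equation*}
Every bracket is $\ge0$ because $\Phi$ is nondecreasing, and — this is the decisive point — every bracket tends to $0$ as $s\to t^-$, since $y_\ell(s),y_{\ell+1}(s)\to X$, $x_k(s)\to x_k(t)$, and $\Phi$ is continuous with $\Phi(0)=0$ (here the local integrability of $\phi$ enters). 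Hence $\dot g(s)\to\Psi_{\ell+1}-\Psi_\ell$ as $s\to t^-$; if this limit were positive, $g$ would be strictly increasing on a left-neighborhood of $t$, contradicting $g>0$ there together with $g(t)=0$. Therefore $\Psi_{\ell+1}\le\Psi_\ell$, which finishes the proof.

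I expect this last step to be the main obstacle. In the pressureless case of \cite{brenier1998sticky} the cluster positions are piecewise linear, so the ordering of the pre-collision velocities is immediate; here the attractive nonlocal term makes $g$ evolve nonlinearly, and the pointwise bound $\dot g(s)\le\Psi_{\ell+1}-\Psi_\ell$ is by itself insufficient — one genuinely needs the nonlocal contribution to vanish in the limit $s\to t^-$. A minor additional point is to confirm that $\psi_j$ is conserved along cluster motion, not merely single-particle motion, on collisionless intervals, but this is part of the already-established properties of $\psi$.
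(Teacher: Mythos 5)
Your proof is correct, and the underlying mechanism is the same as the paper's, but you take a more circuitous route to the monotonicity of $j\mapsto\psi_j(t-)$. The paper's proof is two lines: since trajectories are continuous and $x_j(t)=x_i(t)$ for all $j\in J_i(t)$, the quantity $\psi_j(t-)-v_j(t-)=\sum_k m_k\Phi(x_j(t)-x_k(t))$ is the \emph{same} for every $j\in J_i(t)$, so ordering $\psi_j(t-)$ is equivalent to ordering $v_j(t-)$, and the latter is forced by the fact that the colliding clusters cannot cross before $t$. Your argument reaches the same conclusion by working on $(t-\delta,t)$ instead of directly at $t-$: you use conservation of $\psi$ on collisionless intervals to write $\dot g(s)=(\Psi_{\ell+1}-\Psi_\ell)-\sum_k m_k[\Phi(y_{\ell+1}(s)-x_k(s))-\Phi(y_\ell(s)-x_k(s))]$ and then show the nonlocal bracket vanishes as $s\to t^-$. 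That vanishing is precisely the statement $\psi_j(t-)-v_j(t-)$ is constant across $J_i(t)$, just obtained as a limit rather than read off at $t$; once you have it, your contradiction argument with $g>0$, $g(t)=0$ is the same elementary fact the paper labels ``obvious.'' So nothing is wrong, but the detour through the gap function (and the worry about the nonlinear pre-collision trajectories, which you flag as the main obstacle) is avoidable: the continuity-of-trajectories shortcut makes the ordering of the $\psi_j(t-)$ follow in one step from the ordering of the $v_j(t-)$. Your observation that the inequalities \eqref{e:barycentric} reduce to a weighted-mean statement once monotonicity is in hand is correct and matches what the paper silently uses.
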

\begin{proof}
  It suffices to establish the following monotonicity property:
  \begin{equation}
    \label{e:collisiondecri}
    \psi_{i_*(t)}(t-) \ge \psi_{i_*(t)+1}(t-) \ge \cdots \ge \psi_{i^*(t)}(t-).
  \end{equation} 
However, \eqref{e:collisiondecri} follows directly from the corresponding obvious monotonicity property for the velocities  
 \[
 v_{i_*(t)}(t-) \ge v_{i_*(t)+1}(t-) \ge \cdots \ge v_{i^*(t)}(t-),
 \]
 after taking into account that $\psi_j(t-) - v_j(t-)$ is independent of $j\in J_i(t)$.  
\end{proof}

\section{Entropy solutions to the discretized balance law}
\label{s:discbalancelaw}

In this section, we study the following discretized scalar balance law 
\begin{equation}
\label{e:MN}
\p_t M_N + \p_x(A_N(M_N)) = (\phi*M_N) \p_x M_N, \qquad
M_N(x,0)=M_N^0(x).
\end{equation}
as a first step toward understanding \eqref{e:balancelaw}. Equations \eqref{e:MN} and \eqref{e:balancelaw} are identical except for notation; we write \eqref{e:MN} separately to highlight the special discretized initial data $M_N^0$ and flux $A_N$ under consideration.

Let us describe our hypotheses. We assume $M_N^0$ is piecewise constant, of the form
\begin{equation}
\label{eq:MN0}
M_N^0(x) = -\frac12 + \sum_{j=1}^{N} m_j H(x - x_j^0),
\end{equation}
where the $m_j$'s are all strictly positive and sum to unity, as in \eqref{e:mass1discrete}.  We also assume $x_1^0\le x_2^0\le \cdots \le x_N^0$, and we use $H$ to denote the right-continuous Heaviside function, with $H(0) =1$.  
Note that the range of $M_N$ is discrete, consisting of the values $(\th_i)_{i=0}^N$ defined by
\begin{equation}\label{eq:theta}
\th_i := -\frac{1}{2}+\sum_{j=1}^i m_j.
\end{equation}
We define $A_N$ as a continuous and piecewise linear function, with breakpoints only at $(\th_i)_{i=1}^{N-1}$:
\begin{equation}\label{eq:AN}
A_N:[-\tfrac12,\tfrac12]\to~\R,\quad
  \text{$A_N$ is linear in each interval $[\th_{i-1}, \theta_i]$, for any
  $i=1,\ldots,N$.}
\end{equation}

Our main purpose in this section is to demonstrate that one can generate an entropy solution to \eqref{e:MN} using the sticky particle Cucker--Smale dynamics. This builds a connection between the collision rules \eqref{e:stick}--\eqref{e:momentumconscollision} and the entropy conditions for the scalar balance law \eqref{e:balancelaw}. This connection will be further developed later into a selection principle for a unique weak solution of the 1D Euler-alignment system.

\begin{THEOREM}
\label{t:discr}
Consider the scalar balance law \eqref{e:MN} with discrete initial data $M_N^0$ and flux $A_N$, satisfying the hypotheses \eqref{eq:MN0} (for some $m_i$'s and $x_i^0$'s as described above) and \eqref{eq:AN}, respectively. For each $i=1, \ldots, N$, define $\psi_i^0$ to be the slope of the $i$-th piece of $A_N$, 
\begin{equation}
\label{eq:psiAn}
\psi_i^0 =  A_N'(m), \qquad \text{for}\,\,\,\th_{i-1}<m<\th_i,
\end{equation}
and put 
\begin{equation}
\label{e:SPCSapprox0}
v_i^0 = \psi_i^0 - \sum_{j=1}^{N} m_j \Phi(x_i^0 - x_j^0).
\end{equation}
Let $(x_i(t),v_i(t))_{i=1}^{N}$ follow the sticky particle Cucker--Smale dynamics associated to the masses $(m_i)_{i=1}^{N}$ and the initial conditions $(x_i^0,v_i^0)_{i=1}^{N}$.  Then 
\begin{equation}
\label{e:MNsoln}
M_N(x,t) = -\frac12+\sum_{i=1}^{N} m_i H(x-x_i(t))
\end{equation}
is an entropy solution of the discretized balance law \eqref{e:MN}. Moreover, we have
\begin{equation}\label{e:QN}
A_N\circ M_N (x,t) = A_N(-\tfrac12)+\sum_{i=1}^{N} m_i \psi_i(t)H(x-x_i(t)).
\end{equation}
\end{THEOREM}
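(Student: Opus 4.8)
# Proof Proposal for Theorem~\ref{t:discr}

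The plan is to verify the three bullet points in Definition~\ref{d:entropysoln} for the candidate $M_N$ given by \eqref{e:MNsoln}, and then check \eqref{e:QN}. First I would dispense with the two easy structural conditions: since the trajectories $x_i(t)$ stay ordered and stay inside $[-R(t),R(t)]$ by the maximum principle \eqref{e:xbound}, the function $M_N(\cdot,t)$ is nondecreasing for each $t$ and equals $\pm\tfrac12$ outside a compact interval. The bulk of the work is the entropy inequality \eqref{e:entropyineq}. I would work on a time interval $(t_{k-1},t_k)$ between consecutive collision times, on which no particles collide and the configuration evolves by the smooth Cucker--Smale ODEs \eqref{eq:CS}; on such an interval $M_N$ is a sum of shifted Heavisides moving along the smooth curves $x = x_i(t)$, so away from those curves $M_N$ is locally constant and all the terms in \eqref{e:entropyineq} vanish. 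Thus the entropy inequality reduces to a jump condition along each curve $C_i = \{x = x_i(t)\}$, exactly of the Rankine--Hugoniot/Oleinik form derived at the end of Section~\ref{s:preliminaries}, with $M_\ell = \th_{i_*-1}$, $M_r = \th_{i^*}$ (where $i_*, i^*$ refer to the stuck cluster containing $i$ on that interval — on a collisionless interval each particle that is not stuck to a neighbor gives a single jump of height $m_i$).

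The heart of the matter is therefore to check, along each such curve, both the Rankine--Hugoniot identity \eqref{e:RH} and the Oleinik inequality \eqref{e:Oleinik}. For the R--H condition: the shock speed is $\sigma = \dot x_i(t)= v_i(t)$; by definition \eqref{eq:psiit} of $\psi_i$ we have $v_i = \psi_i - \sum_j m_j\Phi(x_i - x_j)$, and by Remark~\ref{r:conv} the quantity $\sum_j m_j \Phi(x_i - x_j)$ is exactly $\phi * M_N$ evaluated at $x = x_i(t)$ (here one uses that $M_N = -\tfrac12 + \sum_j m_j H(x - x_j)$ has an atom of mass $m_j$ at each $x_j$, and the symmetry/convention in \eqref{eq:phiM}; at the jump point one must take the appropriate average, but since $\Phi(0)=0$ the self-interaction term drops and no ambiguity arises). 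Hence $\sigma + \phi*M_N = \psi_i$. On the other side, $A_N$ is affine on $[\th_{i-1},\th_i]$ with slope $\psi_i^0 = \psi_i(t)$ on a collisionless interval where no prior collisions have merged the cluster; more generally, across a cluster $\{i_*,\dots,i^*\}$ moving together with common velocity, the difference quotient $[[A_N(M_N)]]/[[M_N]] = (A_N(\th_{i^*}) - A_N(\th_{i_*-1}))/(\th_{i^*}-\th_{i_*-1})$ is the mass-weighted average of the slopes $\psi_{i_*}^0,\dots,\psi_{i^*}^0$, which by the barycentric structure and \eqref{e:psicollision} equals precisely $\psi_i(t)$, the common post-collision value. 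This is where Lemma~\ref{c:barycentric} enters: it gives both that this average equals $\psi_i(t+)$ and, crucially, that the \emph{partial} averages over initial sub-blocks $[\th_{i_*-1},\th_\ell]$ are $\ge \psi_i(t)$ while partial averages over terminal sub-blocks are $\le \psi_i(t)$. Reinterpreting $\theta \in (M_\ell, M_r)$ as $\theta = \th_\ell$ for some $i_* \le \ell < i^*$ (the only values in the range of $M_N$ that matter), the left-hand difference quotient in \eqref{e:Oleinik} is $(A_N(\th_\ell) - A_N(\th_{i_*-1}))/(\th_\ell - \th_{i_*-1})$, a partial average of initial slopes, which by the left inequality in \eqref{e:barycentric} is $\ge \psi_i(t) = \sigma + \phi*M_N$. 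That is exactly Oleinik. (For $\theta$ not of the form $\th_\ell$, convexity of the piecewise-linear interpolant reduces to the breakpoint case.)

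Next I would handle the collision times themselves: these form a finite set (at most $N-1$ of them), hence a null set in time, so they do not affect the distributional inequality \eqref{e:entropyineqdist} — one integrates the inequality over each collisionless slab, sums, and the boundary contributions telescope because $M_N(\cdot, t)$, and hence $\eta(M_N)$ and $q(M_N)$, are continuous across each collision time in the relevant $L^1_{\loc}$ sense (the positions $x_i$ are continuous, only the velocities jump). One should also note $M_N(\cdot,t) \in BV_{\loc}$ uniformly on $[0,T]$, so all the terms in \eqref{e:entropyineqdist}, including the nonlocal one $\zeta(\phi*M_N)\p_x(\eta(M_N))$, are well-defined as in the discussion preceding Definition~\ref{d:entropysoln}. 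Finally, \eqref{e:QN} is essentially a bookkeeping identity: $A_N\circ M_N(x,t)$ jumps by $A_N(\th_i) - A_N(\th_{i-1})$ at each $x_i(t)$ — or, across a stuck cluster, by $A_N(\th_{i^*}) - A_N(\th_{i_*-1})$ — and, as computed above for R--H, this equals $m_i\psi_i(t)$ summed appropriately over the cluster; since the cluster members share the common value $\psi_i(t)$, the sum $\sum_{i} m_i \psi_i(t) H(x - x_i(t))$ reproduces $A_N\circ M_N$ up to the additive constant $A_N(-\tfrac12)$. I expect the main obstacle to be the careful accounting at collision clusters — matching up which slopes of $A_N$ correspond to which particles, tracking how $[[A_N(M_N)]]/[[M_N]]$ becomes a mass-weighted slope average, and invoking Lemma~\ref{c:barycentric} in exactly the right form to get Oleinik rather than just R--H — together with the bookkeeping that the convolution term $\phi*M_N$ is continuous and pointwise equals $\sum_j m_j\Phi(x_i - x_j)$ at the shock locations.
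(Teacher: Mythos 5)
Your proposal is correct and follows essentially the same route as the paper's own proof: reduce to the jump conditions along the shock curves $C_i$, verify Rankine--Hugoniot via \eqref{e:psicollision} and Remark~\ref{r:conv} (identifying $\sum_j m_j\Phi(x_i-x_j)$ with $\phi*M_N$ at the shock), verify Oleinik by checking only the breakpoints $\theta_k$ and invoking Lemma~\ref{c:barycentric}, and then read off \eqref{e:QN} as a telescoping bookkeeping identity. The paper is slightly terser (it does not explicitly remark that collision times form a null set or that $\Phi(0)=0$ makes the self-interaction harmless), but your additional remarks are correct and helpful rather than a different argument.
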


\begin{proof}
Since $M_N(\cdot, t)$ in \eqref{e:MNsoln} is piecewise constant, it suffices to check that the shock discontinuities along the curves $C_i = \{(x_i(t), t):t\ge 0\}$ satisfy the Rankine--Hugoniot condition \eqref{e:RH} and the Oleinik entropy condition \eqref{e:Oleinik}, with the shock speed $\s_i(t)=v_i(t)$.

Fix a point $(x_i(t), t)$ on $C_i$. By definition \eqref{e:MNsoln}, we get 
\[
M_N(x_i(t)-,t) = \th_{i_*(t)-1},\quad M_N(x_i(t)+,t)=M_N(x_i(t),t)=\th_{i^*(t)}.
\]
We denote the jump of a function $f$ across $C_i$ by $[[f]] = f(x_i(t)+) - f(x_i(t)-)$. Thus
\[
 [[M_N(\cdot, t)]] = \th_{i^*(t)}-\th_{i_*(t)-1}=\sum_{j\in J_i(t)} m_j,
\]
and from \eqref{eq:psiAn}
\[
[[A_N\circ M_N(\cdot, t)]]  = \int_{\th_{i_*(t)-1}}^{\th_{i^*(t)}} A_N'(m)\dd m = \sum_{j\in J_i(t)} \int_{\th_{j-1}}^{\th_j} A_N'(m)\dd m = \sum_{j\in J_i(t)} m_j \psi_j^0.
\]
Applying \eqref{e:psicollision}, we verify the Rankine--Hugoniot condition \eqref{e:RH}
\[
\frac{[[A_N\circ M_N(\cdot, t)]]}{[[M_N(\cdot, t)]]} = \frac{\sum_{j\in J_i(t)} m_j \psi_j^0}{\sum_{j\in J_i(t)} m_j} = \psi_i(t) = v_i(t) + \phi*M_N(t), \qquad \text{ along }C_i.
\]

Next, we check the Oleinik entropy condition \eqref{e:Oleinik}, that is,
\[
v_i(t) + \phi*M_N(x_i(t),t) \le \frac{A_N(\theta) - A_N(\th_{i_*(t)-1})}{\theta - \th_{i_*(t)-1}},
\qquad \theta\in (\th_{i_*(t)-1}, \th_{i^*(t)}).
\]
Since $A_N$ is piecewise linear, it suffices to check the inequality for $\th=\th_k$, $k\in \{i_*(t)-1,\ldots, i^*(t)\}$.  Applying Lemma \ref{c:barycentric}, we obtain 
\begin{align*}
\frac{A_N(\th_k) - A_N(\th_{i_*(t)-1})}{\th_k - \th_{i_*(t)-1}}=\frac{\sum_{j=i_*(t)}^k m_j \psi_j^0}{\sum_{j=i_*(t)}^k m_j}
\ge \frac{\sum_{j\in J_i(t)} m_j \psi_j^0}{\sum_{j\in J_i(t)} m_j} = \psi_i(t) = v_i(t) + \phi*M_N(x_i(t),t).
\end{align*}

Finally, we check \eqref{e:QN}. The equality is trivial when $x<x_1(t)$. For $x\ge x_1(t)$, let $i$ be the smallest index such that $x>x_i(t)$.  Then we have $M_N(x,t)=\th_i$, so that, recalling \eqref{eq:psiAn}, we get
\[
A_N(M_N(x,t))=A_N(\th_0)+\sum_{j=1}^i\int_{\th_{j-1}}^{\th_j}A_N'(m)\dd m
=A_N(-\tfrac12)+\sum_{j=1}^im_j\psi_j^0
=A_N(-\tfrac12)+\sum_{j=1}^im_j\psi_j^0.
\]
The conservation of momentum \eqref{e:psicollision} implies
$\sum_{j=1}^im_j\psi_j(t)=\sum_{j=1}^im_j\psi_j^0$, which ends the proof.
\end{proof}

\section{The scalar balance law}
\label{s:M}

In this section, we focus on developing global wellposedness theory
for the scalar balance law \eqref{e:balancelaw}, which we recall for the reader's convenience: 
\begin{equation}\label{eq:M}
\p_t M + \p_x(A(M)) = (\phi*M)\p_x M,\qquad M(x,0)=M^0(x).
\end{equation}

The existence and uniqueness theory for entropy solutions of scalar
conservation laws has been well-established. The additional feature of
\eqref{eq:M} is the right-hand side of the equation, which is both nonlinear and
nonlocal, requires extra treatment.
We show that the entropy solution of \eqref{eq:M}, in the sense of Definition
\ref{d:entropysoln}, exists and is unique. Furthermore, it can be
approximated by the sticky particle Cucker--Smale dynamics.
We also obtain stability bounds with respect to the initial condition
$M^0$, as well as the flux $A$.
Our main theorem is stated as follows.

\begin{THEOREM}
\label{t:M}
Consider the scalar balance law \eqref{eq:M}.   Assume the initial condition $M^0$ is a nondecreasing function and that there exists an $R^0>0$ such that $M^0(\pm x)=\pm\frac12$ for any $x\ge R^0$. Let the flux $A:[-\frac12,\frac12]\to \R$ be a Lipschitz function.
\begin{itemize}
\item [(a)] \emph{(Existence and Uniqueness)} Given any $T>0$, the Cauchy problem \eqref{eq:M} has a unique entropy solution
\[
M\in BV(\R\times[0,T]).
\]
\item [(b)] \emph{(Approximability)} For any $T>0$, the entropy solution $M$ of \eqref{eq:M} on $[0,T]$ can be approximated by the discretized balance law \eqref{e:MN}, and hence by the sticky particle Cucker--Smale dynamics, in the following sense. There exists a sequence of (explicit) discrete initial data $M_N^0$ and fluxes $A_N$, satisfying the hypotheses \eqref{eq:MN0} and \eqref{eq:AN}, respectively, such that the associated entropy solutions $M_N$ of \eqref{eq:M} satisfy
\begin{equation}\label{eq:Mlimit}
  M_N-M\to 0 \quad \text{in}\,\, C([0,T];L^1(\R)),
\end{equation}
and 
\begin{equation}\label{eq:Mtlimit}
\p_tM_N(\cdot,t)\stackrel{*}{\rightharpoonup}  \p_tM(\cdot,t)\quad \text{in}\,\, \cM(\R).
\end{equation}
for any $t\in[0,T]$. Here, $\cM$ is the space of measures.
\item [(c)] \emph{(Stability)} Let $\tM$ be the entropy solution of the scalar balance law
\[
\p_t \tM + \p_x(\wtA(\tM)) = (\phi*\tM)\p_x \tM,\quad \tM(x,0)=\tM^0(x)
\]
with initial data $\tM^0$ and flux $\wtA$ that satisfy the same assumptions as $M^0$ and $A$ respectively. Then for any $t\geq0$, we have the following stability bound:
\begin{equation}
\label{e:stability}
\|M(\cdot,t)-\tM(\cdot,t)\|_{L^1(\R)}
\le \|M^0-\tM^0\|_{L^1(\R)} + t |A - \wtA|_{\Lip} .
\end{equation}
	\end{itemize}	
\end{THEOREM}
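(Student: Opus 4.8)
\emph{Proof plan.} The strategy is to obtain the solution of part~(a) together with the approximation of part~(b) by passing to the limit in the sticky particle solutions supplied by Theorem~\ref{t:discr}, and to obtain uniqueness together with the stability bound of part~(c) from a Kruzkov doubling-of-variables argument adapted to the nonlocal right-hand side of \eqref{e:balancelaw}. Fix $T>0$. For each $N$ I take $m_j=1/N$, $\theta_j=-\tfrac12+j/N$, $x_j^0=\inf\{x:M^0(x)\ge\theta_j\}$, let $M_N^0$ be the step function \eqref{eq:MN0} built from these, and let $A_N$ be the continuous piecewise-linear interpolant of $A$ at the nodes $\{\theta_i\}$; then \eqref{eq:AN} holds, $A_N\to A$ uniformly, $|A_N|_{\Lip}\le|A|_{\Lip}$, $M_N^0\to M^0$ in $L^1(\R)$, and each $x_j^0\in[-R^0,R^0]$ since $M^0(\pm x)=\pm\tfrac12$ for $x\ge R^0$. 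By Theorem~\ref{t:discr} the sticky particle Cucker--Smale dynamics produce an entropy solution $M_N$ of \eqref{e:balancelaw} with data $M_N^0$ and flux $A_N$. From \eqref{e:SPCSapprox0}, $|\psi_i^0|\le|A|_{\Lip}$, and $|\Phi(x_i^0-x_j^0)|\le\Phi(2R^0)$, I get $|v_i^0|\le V:=|A|_{\Lip}+\Phi(2R^0)$ uniformly in $N$; the maximum principle \eqref{e:MPv} propagates this bound, so each trajectory $x_i(\cdot)$ is $V$-Lipschitz and $t\mapsto M_N(\cdot,t)$ is $V$-Lipschitz from $[0,T]$ into $L^1(\R)$ uniformly in $N$, while $M_N(\cdot,t)$ is nondecreasing with total variation $1$ and equals $\pm\tfrac12$ for $\pm x\ge R(T):=R^0+VT$.

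These uniform bounds make $\{M_N(\cdot,t)\}_N$ precompact in $L^1(\R)$ for each $t$, so a subsequence converges, $M_N\to M$ in $C([0,T];L^1(\R))$, with $M(\cdot,t)$ nondecreasing, equal to $\pm\tfrac12$ for $\pm x\ge R(T)$, $V$-Lipschitz in time into $L^1(\R)$, and (by lower semicontinuity of the total variation) in $BV(\R\times[0,T])$. To check that $M$ is an entropy solution I pass to the limit in \eqref{e:entropyineqdist} for the Kruzkov pairs \eqref{e:Kruzkovpair}: the terms $\eta(M_N)\partial_t\zeta$ and $q_N(M_N)\partial_x\zeta$ converge by dominated convergence, using $A_N\to A$ uniformly so that $q_N\to q$ uniformly; for the nonlocal term, Remark~\ref{r:conv} gives $\phi*M_N=\Phi*\rho_N$ with $\rho_N=\partial_xM_N(\cdot,t)$ a probability measure on $[-R(T),R(T)]$, so $\{\phi*M_N\}$ is equicontinuous with modulus that of $\Phi$ and $\phi*M_N\to\phi*M=\Phi*\rho$ uniformly on $[-R(T),R(T)]\times[0,T]$, whence, since $\partial_x\eta(M_N)\rightharpoonup\partial_x\eta(M)$ as measures of uniformly bounded mass, $\iint\zeta\,(\phi*M_N)\,\partial_x\eta(M_N)\to\iint\zeta\,(\phi*M)\,\partial_x\eta(M)$. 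Thus $M$ is an entropy solution, which establishes existence in (a).

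For uniqueness and stability let $M,\tM$ be entropy solutions with data/fluxes $(M^0,A)$ and $(\tM^0,\wtA)$, the $t=0$ boundary contribution being handled as in Kruzkov's theory. Running the doubling of variables on the two entropy inequalities against the pairs \eqref{e:Kruzkovpair} and letting the mollifier concentrate on the diagonal, the local part is standard; integrating the resulting inequality over $x\in\R$ (where the entropy-flux divergence vanishes because $w:=M-\tM$ is compactly supported) leaves
\[
\frac{\dd}{\dd t}\|w(\cdot,t)\|_{L^1(\R)}\le|A-\wtA|_{\Lip}\cdot\mathrm{TV}_x\,\tM(\cdot,t)+\mathcal N(t),\qquad\mathcal N(t):=\int_\R\sgn{w}\big[(\phi*M)\,\partial_xM-(\phi*\tM)\,\partial_x\tM\big]\dx,
\]
and $\mathrm{TV}_x\,\tM(\cdot,t)=1$ by monotonicity --- which is precisely why the final bound carries no $\mathrm{TV}(M^0)$ prefactor and no exponential in $t$. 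To handle $\mathcal N(t)$ I invoke the BV chain rule: averaging the two decompositions
\[
\sgn{w}\big[(\phi*M)\partial_xM-(\phi*\tM)\partial_x\tM\big]=(\phi*M)\partial_x|w|+\sgn{w}(\phi*w)\partial_x\tM=(\phi*\tM)\partial_x|w|+\sgn{w}(\phi*w)\partial_xM
\]
and writing $(\phi*(M+\tM))\partial_x|w|=\partial_x\big[(\phi*(M+\tM))|w|\big]-|w|(\phi*\mu)$ with $\mu:=\partial_xM+\partial_x\tM\ge0$, the divergence integrates to zero and, using evenness of $\phi$ to rewrite $\int_\R|w|(\phi*\mu)\dx=\int_\R(\phi*|w|)\dmu$, there remains
\[
\mathcal N(t)=\tfrac12\int_\R\big(\sgn{w}\,(\phi*w)-\phi*|w|\big)\dmu\le0,
\]
valid because $\phi\ge0$, $\mu\ge0$, and $w(z)\sgn{w(x)}\le|w(z)|$ pointwise under the double integral. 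Integrating in $t$ yields \eqref{e:stability}, which is (c); the choice $M^0=\tM^0$, $A=\wtA$ forces $M=\tM$, completing the uniqueness in (a). I expect this nonlocal term to be the main obstacle: when $\phi$ is merely locally integrable $\phi*M$ is only continuous, not Lipschitz, so both the convergence of $\iint\zeta(\phi*M_N)\partial_x\eta(M_N)$ above and the rigorous execution of the diagonal limit and BV-chain-rule cancellation giving $\mathcal N(t)\le0$ require the monotonicity-dependent argument just sketched, which has no analogue in the purely local theory.

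Finally, for (b): by the uniqueness just proved every subsequential limit is the unique entropy solution $M$, so the whole sequence converges, $M_N\to M$ in $C([0,T];L^1(\R))$, which is \eqref{eq:Mlimit}. Reading $\partial_tM_N(\cdot,t)=-\partial_x\big(A_N(M_N(\cdot,t))\big)+(\phi*M_N(\cdot,t))\,\partial_xM_N(\cdot,t)$ off \eqref{e:balancelaw} --- a measure on $\R$ of total mass at most $|A|_{\Lip}+\Phi(2R(T))$, uniformly in $N$ --- and using the convergences already at hand ($A_N(M_N(\cdot,t))\to A(M(\cdot,t))$ in $L^1$, $\phi*M_N(\cdot,t)\to\phi*M(\cdot,t)$ uniformly, $\partial_xM_N(\cdot,t)\rightharpoonup\partial_xM(\cdot,t)$), this converges weak-$*$ in $\cM(\R)$ to $-\partial_x\big(A(M(\cdot,t))\big)+(\phi*M(\cdot,t))\partial_xM(\cdot,t)=\partial_tM(\cdot,t)$, which is \eqref{eq:Mtlimit}.
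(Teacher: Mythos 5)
Your proposal is correct and follows the same overall strategy as the paper: existence and approximability by passing to the limit in the sticky particle solutions of Theorem~\ref{t:discr}, uniqueness and stability by Kruzkov doubling of variables. Two of your sub-arguments take a different route from the paper's. First, for uniform convergence of $\phi*M_N$ (Step~3 of the existence proof), you use the identity $\phi*M_N=\Phi*\rho_N$ and equicontinuity via the modulus of $\Phi$ on $[-2R(T),2R(T)]$, whereas the paper mollifies $\phi$ and splits $\|\phi*M_N-\phi*M\|_{L^\infty}\le\|\phi-\phi_\delta\|_{L^1}\|M_N-M\|_{L^\infty}+\|\phi_\delta\|_{L^\infty}\|M_N-M\|_{L^1}$; both are valid. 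Second, and more substantively, for the nonlocal contribution in the stability argument, you collapse the doubled variables immediately and then show $\mathcal{N}(t)\le 0$ by averaging the two BV chain-rule decompositions and using $\sgn{w(x)}\,w(z)\le|w(z)|$, $\phi\ge 0$, $\mu\ge 0$. The paper instead keeps the doubled variables longer, introduces $\partial_{\barx}$ and $\partial_{\bary}$, proves a Lipschitz bound on $\g(M,\tM)$ (Lemma~\ref{lem:gamma}) to control the $\partial_{\bary}$-term, and then after the collapse uses the \emph{oddness of $\Phi$} to show the bilinear pairing $\int(\Phi*\mu)\,\dnu+\int(\Phi*\nu)\,\dmu$ vanishes exactly. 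Your symmetrization plays the same role as that oddness cancellation and yields $\mathcal{N}(t)\le 0$ rather than an exact zero for the upper bound, but the outcome is the same $L^1$ contraction. One caution: your passage ``letting the mollifier concentrate on the diagonal, the local part is standard'' compresses precisely the step where the BV chain rule for the composition $\sgn{M(x,t)-\tM(y,s)}\partial_1M$ is delicate (especially with $\phi$ merely locally integrable), and the paper's detour through Lemma~\ref{lem:gamma} and the $\partial_{\bary}$-estimate is what makes this collapse rigorous. You should at least record the analogue of that lemma to justify your identity $\sgn{w}\partial_x M=\partial_x|w|+\sgn{w}\partial_x\tM$ at the level of measures. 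Also note that your differential inequality $\frac{\dd}{\dt}\|w(\cdot,t)\|_{L^1}\le\cdots$ should really be stated in integral form, obtained via the $h_\delta$ test-function in time as in the paper, since $t\mapsto\|w(\cdot,t)\|_{L^1}$ need not be differentiable a priori.
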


The rest of the section is devoted to the proof of this theorem. Before beginning in earnest, however, we note the following.  In our argument, we will need to differentiate the
composition of a Lipschitz function and a $BV$ function.  To make
sense of such an operation, one can use Vol'pert's theory of the $BV$
calculus \cite{Volpert1967}. The precise version of the $BV$ chain
rule that we need is stated in \cite[Lemma A2.1]{BouchutPerthame1998}.
\begin{LEMMA}
	\label{l:BVchainrule}
	Suppose $W\in BV_{\loc}(\R)$ and $f$ is Lipschitz. Then
	$f\circ W$ belongs to $\in BV_{\loc}(\R)$, and in the sense of measures,
	\begin{equation}\label{eq:BVchain}
	\left| \frac{\dd}{\dx} (f\circ W) \right| \le |f|_{\Lip} \left|\frac{\dd}{\dx} W\right|. 
	\end{equation}
\end{LEMMA}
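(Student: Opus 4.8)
We sketch the proof. The statement is classical --- it goes back to Vol'pert \cite{Volpert1967}, and the precise form we use is \cite[Lemma A2.1]{BouchutPerthame1998} --- so one could simply quote it. For completeness the plan is to give a short self-contained argument in dimension one that yields exactly the inequality asserted. The idea is to bypass Vol'pert's full chain-rule formula (which identifies the measure $\tfrac{\dd}{\dx}(f\circ W)$ exactly, including its action on the jump set) and work directly at the level of classical pointwise total variation $V_{[a,b]}(\cdot)$ over an interval $[a,b]$, where the Lipschitz bound on $f$ enters trivially.

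First I would fix the right-continuous representative of $W$. For a bounded interval $[a,b]$ and any partition $a=x_0<\cdots<x_n=b$,
\[
\sum_{k}\bigl|f(W(x_{k+1}))-f(W(x_k))\bigr|\le|f|_{\Lip}\sum_{k}\bigl|W(x_{k+1})-W(x_k)\bigr|\le|f|_{\Lip}\Bigl|\tfrac{\dd}{\dx}W\Bigr|([a,b]),
\]
the last step because each increment of the right-continuous representative is the mass of $\tfrac{\dd}{\dx}W$ on a half-open subinterval. Taking the supremum over partitions shows $V_{[a,b]}(f\circ W)\le|f|_{\Lip}\bigl|\tfrac{\dd}{\dx}W\bigr|([a,b])<\infty$ for every bounded interval; since $f\circ W$ is also locally bounded ($W\in BV_{\loc}(\R)\subset L^\infty_{\loc}(\R)$ in one dimension), this already gives $f\circ W\in BV_{\loc}(\R)$.

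Next I would upgrade this to the claimed inequality between Radon measures. Both $\bigl|\tfrac{\dd}{\dx}(f\circ W)\bigr|$ and $|f|_{\Lip}\bigl|\tfrac{\dd}{\dx}W\bigr|$ are Radon measures, so by outer regularity it suffices to compare them on open sets, and by countable additivity over connected components, on open intervals $(c,d)$. Using the standard relations $\bigl|\tfrac{\dd}{\dx}g\bigr|((c,d))\le\lim_{\delta\downarrow0}V_{[c+\delta,d-\delta]}(g)$ and $V_{[c',d']}(g)\le\bigl|\tfrac{\dd}{\dx}g\bigr|([c',d'])$ valid for the right-continuous representative, together with the partition bound above and continuity of the measure $\bigl|\tfrac{\dd}{\dx}W\bigr|$ along the exhaustion $[c+\delta,d-\delta]\uparrow(c,d)$, we obtain
\[
\Bigl|\tfrac{\dd}{\dx}(f\circ W)\Bigr|((c,d))\le|f|_{\Lip}\lim_{\delta\downarrow0}\Bigl|\tfrac{\dd}{\dx}W\Bigr|([c+\delta,d-\delta])=|f|_{\Lip}\Bigl|\tfrac{\dd}{\dx}W\Bigr|((c,d)),
\]
and summing over components and invoking outer regularity yields $\bigl|\tfrac{\dd}{\dx}(f\circ W)\bigr|\le|f|_{\Lip}\bigl|\tfrac{\dd}{\dx}W\bigr|$ as measures.

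I do not expect a substantial obstacle here; the only delicate point is the bookkeeping between classical pointwise variation over closed intervals and the mass of the distributional derivative near the endpoints, where $\tfrac{\dd}{\dx}W$ may carry atoms, which is dealt with by the harmless interval-shrinking above. An equivalent route, if one prefers the smooth category, is to mollify $W_\e=W*\rho_\e$, use the elementary a.e. identity $(f\circ W_\e)'=f'(W_\e)\,W_\e'$ (legitimate because $W_\e'=0$ a.e. on $W_\e^{-1}(Z)$, where $Z$ is the null set on which $f$ fails to be differentiable, by Stampacchia's lemma), note $\|W_\e'\|_{L^1(K)}\le\bigl|\tfrac{\dd}{\dx}W\bigr|(K^\e)$, and pass to the limit using $f\circ W_\e\to f\circ W$ in $L^1_{\loc}(\R)$ together with lower semicontinuity of total variation. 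In either approach, the finer content of Vol'pert's theorem --- the exact formula for $\tfrac{\dd}{\dx}(f\circ W)$ --- is not needed for the inequality we require.
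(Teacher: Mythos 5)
The paper does not prove Lemma~\ref{l:BVchainrule} at all: it simply invokes Vol'pert's theory and cites \cite[Lemma~A2.1]{BouchutPerthame1998} for the precise form needed. Your proposal, by contrast, supplies a short self-contained one-dimensional proof, which is a genuine addition rather than a rephrasing. The argument is correct. The key observations are all sound: for the right-continuous representative, increments correspond to masses of $\tfrac{\dd}{\dx}W$ on half-open subintervals, so the Lipschitz bound passes through partition sums to give $V_{[a,b]}(f\circ W)\le |f|_{\Lip}\,\big|\tfrac{\dd}{\dx}W\big|([a,b])$; and the upgrade to an inequality of Radon measures via outer regularity, comparison on open intervals, and continuity from below of $\big|\tfrac{\dd}{\dx}W\big|$ along the exhaustion $(c+\delta,d-\delta]\uparrow(c,d)$ is the right bookkeeping to handle possible atoms at the endpoints. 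The mollification alternative you sketch is also a standard and valid route, and the appeal to Stampacchia's lemma to justify $(f\circ W_\e)'=f'(W_\e)W_\e'$ a.e.\ is exactly the point one needs when $f$ is merely Lipschitz. What your approach buys over the paper's citation is transparency: it makes explicit that only the coarse total-variation bound is needed, not Vol'pert's exact chain-rule identity on the jump and Cantor parts. What the paper's citation buys is brevity and a pointer to the general statement (valid in higher dimensions and for vector-valued $W$, which your 1D partition argument does not directly cover, though that generality is not needed here).
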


\subsection{Existence and approximability}

\label{ss:existence}

We start with the existence part of Theorem \ref{t:M}(a). The plan is to construct an entropy solution of \eqref{eq:M} using the front-tracking scheme, c.f. \cite[Chapter 14]{Dafermosbook}. A front-tracking approximation of \eqref{eq:M} follows precisely the dynamics of the discretized balance law \eqref{e:MN}; therefore, we will construct a sequence of approximated solutions $M_N$, extract a limit $M$, and show that $M$ is an entropy solution of \eqref{eq:M}.

\subsubsection*{Step 1: Constructing an approximation sequence}
For a given $N$, we construct an initial condition $M_N^0$ and flux $A_N$ for the discretized balance law \eqref{e:MN}. As long as the hypotheses \eqref{eq:MN0} and \eqref{eq:AN} are satisfied, we can apply Theorem \ref{t:discr} to get a solution $M_N$ of the form \eqref{e:MN}.  We give slightly more detail in this step than what is strictly necessary for the proof of Theorem \ref{t:M}(a); we do this to allow for the reader to easily compare the approximation scheme we use here with the one we use later in Theorem \ref{t:approxrate}.

We begin with an $N$-tuple of positive masses $(m_{i,N})_{i=1}^N$ that sum to unity \eqref{e:mass1discrete}. We also assume
\begin{equation}\label{eq:miN}
  \lim_{N\to\infty}\max_{1\le i\le N}m_{i,N}=0.
\end{equation}
A typical choice is $m_{i,N} = \tfrac1N$, so all particles have the same mass. Next, we define $x_{i,N}^0$ by
\begin{equation}\label{eq:xiN0}
  x_{i,N}^0 = \inf\big\{x:M^0(x)\ge \th_{i,N}\big\},\quad
  i=1,\ldots,N.
\end{equation}
where $\th_{i,N}$ is defined in \eqref{eq:theta}.
It is easy to check that $\{x_{i,N}^0\}_{i=1}^N\subset [-R^0,R^0]$.
Then, $M_N^0$ can be constructed from \eqref{eq:MN0}. 
Finally, we define $A_N$ as the piecewise linear approximation of $A$
such that
\begin{equation}\label{eq:ANdef}
  A_N(\th_{i,N})=A(\th_{i,N}),\quad i=0,\ldots,N.
\end{equation}
 
The $M_N^0$ and $A_N$ constructed through the procedure above clearly
satisfy the hypotheses \eqref{eq:MN0} and \eqref{eq:AN}. Moreover,
they approximate $M^0$ and $A$ in the following
sense.

\begin{LEMMA}\label{lem:approx0}
The following inequalities hold:
\begin{equation}\label{eq:MAroughlimit}
\|M_N^0 - M^0\|_{L^1(\R)} \le 2R^0 \max_{1\le i\le N} m_{i,N},
\qquad 
\sup_{m\in [-\frac12, \frac12]} |A_N(m) - A(m)| \le |A|_{\Lip} \max_{1\le i\le N} m_{i,N}.
\end{equation}
In particular, $M_N^0- M^0\to 0$ in $L^1(\R)$ and $A_N\to A$ uniformly, as $N\to \infty$.
\end{LEMMA}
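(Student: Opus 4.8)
The plan is to prove the two inequalities of Lemma~\ref{lem:approx0} independently, each by a short elementary estimate, and then deduce the ``in particular'' statement directly from \eqref{eq:miN}. Write $\mu_N := \max_{1\le i\le N} m_{i,N}$ for brevity.

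For the first inequality I would exploit that $M^0$ and $M_N^0$ are both nondecreasing, take values in $[-\tfrac12,\tfrac12]$, and coincide outside $[-R^0,R^0]$, so it is enough to bound $|M_N^0-M^0|$ pointwise and integrate over an interval of length $2R^0$. The key pointwise claim: for $x\in(x_{i,N}^0,x_{i+1,N}^0)$ one has $M_N^0(x)=\th_{i,N}$ by \eqref{eq:MN0}--\eqref{eq:theta}, while from the generalized-inverse definition \eqref{eq:xiN0} and monotonicity of $M^0$ one gets $\th_{i,N}\le M^0(x)<\th_{i+1,N}$ (the lower bound because some $y\le x$ satisfies $M^0(y)\ge\th_{i,N}$; the upper bound because $M^0(x)\ge\th_{i+1,N}$ would force $x\ge x_{i+1,N}^0$). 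Hence $0\le M^0(x)-M_N^0(x)<m_{i+1,N}\le\mu_N$ off the finite set $\{x_{i,N}^0\}$. Together with $\{x_{i,N}^0\}\subset[-R^0,R^0]$ and $M^0(\pm x)=\pm\tfrac12$ for $x\ge R^0$, the same type of estimate at the two endpoints shows $M_N^0-M^0$ is supported in $[-R^0,R^0]$; integrating the pointwise bound over that interval yields $\|M_N^0-M^0\|_{L^1(\R)}\le 2R^0\mu_N$.

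For the second inequality I would fix $m\in[-\tfrac12,\tfrac12]$, pick $i$ with $m\in[\th_{i-1,N},\th_{i,N}]$, and use that by \eqref{eq:AN} and \eqref{eq:ANdef} $A_N(m)=(1-t)A(\th_{i-1,N})+tA(\th_{i,N})$ with $t=(m-\th_{i-1,N})/m_{i,N}\in[0,1]$. Then the triangle inequality and the Lipschitz bound on $A$ give $|A_N(m)-A(m)|\le(1-t)|A|_{\Lip}(m-\th_{i-1,N})+t|A|_{\Lip}(\th_{i,N}-m)\le|A|_{\Lip}(\th_{i,N}-\th_{i-1,N})=|A|_{\Lip}m_{i,N}\le|A|_{\Lip}\mu_N$, and taking the supremum over $m$ finishes it. Finally, both right-hand sides tend to $0$ as $N\to\infty$ by \eqref{eq:miN}, giving $M_N^0-M^0\to 0$ in $L^1(\R)$ and $A_N\to A$ uniformly.

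There is no genuine obstacle here; the only point requiring a little care is the first part, where one must make sure the pointwise inequality $\th_{i,N}\le M^0(x)<\th_{i+1,N}$ is justified purely from monotonicity of $M^0$ (it holds outside the countable jump set of $M^0$ and outside $\{x_{i,N}^0\}$, which is all one needs for an $L^1$ bound), and that the difference is genuinely supported in $[-R^0,R^0]$ so that the clean factor $2R^0$ appears rather than something involving the whole line.
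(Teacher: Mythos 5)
Your proposal is correct and follows essentially the same approach as the paper: identify the interval decomposition induced by the nodes $x_{i,N}^0$, bound $M^0 - M_N^0$ by $m_{i,N}$ on each piece, and sum using the fact that the $x_{i,N}^0$ fall in $[-R^0, R^0]$, then handle the flux by exploiting that $A_N$ interpolates $A$ at the $\th_{i,N}$ and applying the Lipschitz bound on each subinterval. The only cosmetic difference is that the paper writes the first estimate as $\sum_j m_{j,N}(x_{j,N}^0-x_{j-1,N}^0)\le \mu_N\cdot 2R^0$, keeping the length factors explicit before passing to the maximum, whereas you pass to the uniform pointwise bound $|M^0-M_N^0|\le\mu_N$ first and then integrate over $[-R^0,R^0]$; these are the same estimate in two guises.
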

\begin{proof}
  Denote $x_{0,N}^0:=-R^0$.
  We have
  \[\|M_N^0-M^0\|_{L^1(\R)}=\sum_{j=1}^N\int_{x_{j-1,N}^0}^{x_{j,N}^0}\big(M^0(x)-M_N^0(x)\big)\dx\leq
    \sum_{j=1}^N m_{j,N}(x_{j,N}^0-x_{j-1,N}^0)\leq 2R^0\max_{1\le j\le N} m_{j,N},\]
  which proves the first inequality in \eqref{eq:MAroughlimit}.  Note
  that we may allow $x_{j-1,N}^0=x_{j,N}^0$ for some $j$'s, and the
  estimate above still holds.  As for the second inequality in \eqref{eq:MAroughlimit}, fix $m\in [-\frac12, \frac12]$ and choose $i$ such that  $m\in[\th_{i-1,N},\th_{i,N}]$.  Then \eqref{eq:AN} and \eqref{eq:ANdef} imply 
  \[
  A_N(m) - A(m) = \frac{m-\th_{i-1,N}}{m_i}(A(\th_{i-1,N}) - A(m)) + \frac{\th_{i,N}-m}{m_i}(A(\th_{i,N}) - A(m)),
  \]
  which easily implies the second inequality.
\end{proof}
	
\subsubsection*{Step 2: Extracting a limit $M$}
Fix a time $T>0$. For any $t\in[0,T]$, since $M_N(t)$ is uniformly bounded and nondecreasing, we  may apply Helly's selection theorem and find a convergent subsequence $M_{N_k}(t)$ in $L^1_{\loc}(\R)$. Using a diagonal argument, we can get a further subsequence, still denoted by $M_{N_k}$, that is convergent for all rational $t\in[0,T]$ in $L^1_{\loc}(\R)$.  We provisionally denote the limit by $M(t)$.  We want to upgrade the convergence $M_{N_k}(t)-M(t)\to 0$ from $L^1_{\loc}(\R)$ to $L^1(\R)$ and also extend our conclusion to irrational times.  The following observation will help us achieve this.  

Note that by 
\eqref{eq:psiAn}, \eqref{e:SPCSapprox0}, and the monotonicity of $\Phi$, we have the following $N$-independent bound on the initial velocities $v_{i,N}^0$:
\begin{equation}
\label{e:viN0bd}
|v_{i,N}^0| = \left|\psi_{i,N}^0 - \sum_{j=1}^{N} m_{j,N}
\Phi(x_{i,N}^0 - x_{j,N}^0)\right|\le |A_N|_{\Lip}  + \Phi(2R^0) \le |A|_{\Lip} +  \Phi(2R^0).
\end{equation}
Then, \eqref{e:xbound} implies that for $t\in [0,T]$, we have $\{x_{i,N}(t)\}_{i=1}^N\subset [-R(T),R(T)]$, where 
\begin{equation}\label{eq:RT}
  R(T)=R^0+T(|A|_{\Lip} +  \Phi(2R^0)).
\end{equation}
It follows that $M_N(\pm x,t)=\pm\tfrac12$ for all $x>R(T)$ and $t\in[0,T]$, and thus we have $M_{N_k}(t)-M(t)\to 0$ in $L^1(\R)$ for all rational times $t\in \Q_+$.  The extension of this  convergence to irrational times is an easy consequence of the time regularity estimate
\begin{align}\label{eq:timereg}
  \int_\R |M_N(x,t) - M_N(x,s)|\dx
\le &\,\sum_{i=1}^Nm_i|x_{i,N}(t)-x_{i,N}(s)|
\le \max_{i} |v_{i,N}^0|\cdot (t-s)\\  \le&\, \big(|A|_{\Lip} +  \Phi(2R^0)\big) (t-s).\nonumber
\end{align}
We used \eqref{e:MNsoln} to get the first inequality, then the maximum principle \eqref{e:MPv} to get the second, and finally the bound \eqref{e:viN0bd} to finish.  

Combining \eqref{eq:timereg} with the established convergence $M_{N_k}(t)-M(t)\to 0$ at rational times, we conclude
\[
M_{N_k}-M\to 0\quad \text{in}\,\, C([0,T];L^1(\R)).
\]
The limit $M$ has the desired properties: For each $t\in [0,T]$, the function $M(\cdot, t)$ is nondecreasing, with $M(\pm x,t)=\pm\tfrac12$ for all $x\ge R(T)$. Moreover, the time regularity estimate
\eqref{eq:timereg} implies a uniform bound
\[\left\|\p_tM_{N_k}(x,t)\right\|_{\cM}\leq |A|_{\Lip} +  \Phi(2R^0),\quad \text{for all}~~t\in[0,T].\]
Then, extracting a further subsequence, still denoted by $M_{N_k}$, we
obtain the weak-$*$ convergence
\[\p_tM_{N_k}(\cdot,t)\stackrel{*}{\rightharpoonup}  \p_tM(\cdot,t)\quad \text{in}\,\, \cM(\R).\] 
This also allows us to conclude that $M\in BV(\R\times[0,T])$.

\begin{REMARK}
Once we show $M$ is the \emph{unique} entropy solution of \eqref{eq:M} (through an argument independent of the existence proof), we can conclude that the whole sequence $M_N$ converges to $M$, finishing the proof of Theorem \ref{t:M}(b). 
\end{REMARK}
	
\subsubsection*{Step 3: Verifying the entropy conditions} Finally, we show that the function $M$ we have constructed above is indeed an entropy solution of \eqref{eq:M}.  We do this by verifying the entropy inequality \eqref{e:entropyineqdist} for all Kruzkov entropy pairs $(\eta, q)$ in \eqref{e:Kruzkovpair}.

We know from Theorem \ref{t:discr} that $M_N$ is an entropy solution of \eqref{e:MN}. Thus the entropy inequality \eqref{e:entropyineqdist} is satisfied for $(\eta, q_N)$, where $\eta(m)=|m-\a|$ and $q_N(m)=\sgn{m-\a}(A_N(m)-A_N(\a))$. It reads
\[
\int_0^{T} \int_\R \Big[\eta(M_N) \p_t \zeta + q_N(M_N)\p_x \zeta +
(\phi*M_N)\zeta \p_x(\eta(M_N)) \Big] \dx \dt
\ge 0.
\]
Now we pass to the limit. To simplify the notation, we write $M_N$
instead of $M_{N_k}$ in what follows. Define $q(m) = \sgn{m-\a}(A(m) -
A(\a))$, as in \eqref{e:Kruzkovpair}.  For a fixed $t\in[0,T]$ (for
which we suppress the notation), we use Lemma \ref{lem:approx0} and get
\begin{align*}
  \|\eta(M_N)-\eta(M)\|_{L^1}\leq
  &\,|\eta|_{\Lip}\|M_N-M\|_{L^1}=\|M_N-M\|_{L^1}\to0,\\
  \|q_N(M_N)-q(M)\|_{L^1}\leq &\,\|q_N(M_N)-q(M_N)\|_{L^1}+\|q(M_N)-q(M)\|_{L^1}\\
  \leq &\, 2R(T)\|A_N-A\|_{L^\infty([-\frac12,\frac12])}+|A|_{\Lip}\|M_N-M\|_{L^1}\to0.
\end{align*}
This establishes convergence for the first two terms.  The last term is more subtle; we argue as follows.

First, we claim that $\phi*M_N$ converges uniformly to $\phi*M$ with respect to $x$, and that this convergence is furthermore uniform with respect to $t$ on $[0,T]$.  This is immediate if $\phi$ is bounded; otherwise we can consider a mollification $\phi_\d$ of $\phi$ and estimate as follows:
\begin{align*}
\|\phi*M_N - \phi*M\|_{L^\infty} 
& \le \|\phi - \phi_\d\|_{L^1} \|M_N - M\|_{L^\infty} + \|\phi_\d\|_{L^\infty} \|M_N - M\|_{L^1}.
\end{align*}
We can first choose $\d$ so that the first term on the right is small, then choose $N$ large enough to finish.

Next, we note that (by  Lemma \ref{l:BVchainrule}) $\p_x(\eta(M_N))$ is a bounded sequence in $\cM(\R)$:  
\[
\|\p_x(\eta(M_N))\|_{\cM} \le |\eta|_{\Lip}.
\]
The same bound holds for $\p_x(\eta(M))$.  Now we can write 
\begin{align*}
& \left| \int_0^{T} \int_\R (\phi*M_N)\zeta \p_x (\eta(M_N)) \dx \dt - \int_0^{T} \int_\R (\phi*M_N)\zeta \p_x (\eta(M_N)) \dx \dt \right| \\
& \le \int_0^{T}  \|\zeta(t)\|_{L^\infty}\|\phi*M_N - \phi*M\|_{L^\infty} \|\p_x(\eta(M_N))\|_{\cM} \dt \\
& \qquad + 
\left| \int_0^{T} \int_\R (\phi*M)\zeta \big[ \p_x(\eta(M_N)) - \p_x(\eta(M))\big] \dx\dt \right|.
\end{align*}
The first term on the right side of the above inequality goes to zero in light of the above arguments. We can establish the vanishing of the second term similarly:  Mollifying $\phi*M$ if necessary, we write 
\begin{align*}
& \left| \int_0^{T} \int_\R (\phi*M)\zeta \big[ \p_x(\eta(M_N)) - \p_x(\eta(M))\big] \dx\dt \right| \\
& \le \int_0^{T} \| \zeta (\phi*M - (\phi*M)_\d)\|_{L^\infty} \| \p_x(\eta(M_N)) - \p_x(\eta(M)) \|_\cM \dt \\
& \quad + \int_0^{T} \|\p_x\big( \zeta(\phi*M)_\d \big) \|_{L^\infty} \|\eta(M_N) - \eta(M)\|_{L^1} \dt.
\end{align*}
Note that mollification is unnecessary if $\phi$ is bounded.  In any case, the continuity of $\phi*M$ and the compact support of $\zeta$ guarantee that $\|\zeta\big((\phi*M) - (\phi*M)_\d \big) \|_{L^\infty}$ can be made as small as desired by choosing $\d$ appropriately, after which we can choose $N$ large enough to make $\|\eta(M_N) - \eta(M)\|_{L^1}$ small.  We thus obtain the entropy inequality \eqref{e:entropyineqdist} and conclude that $M$ is an entropy solution of \eqref{eq:M}.

\subsection{Uniqueness and $L^1$ stability}
\label{ss:uniqueness}

We now prove the stability estimate \eqref{e:stability}. Note that uniqueness is a direct consequence if we set $\tM^0=M^0$ and $\wtA=A$. We use Kruzkov's \emph{doubling of the variables} strategy, with additional treatment of the nonlocal term on the right-hand side of \eqref{eq:M}.

For fixed $(y,s)$, consider the Kruzkov entropy pair \eqref{e:Kruzkovpair} with $\a = \tM(y,s)$, and a test function  $\zeta(x,t) = w(x,t,y,s)$ to be specified later. Entropy inequality~\eqref{e:entropyineqdist}~reads
\begin{align*}
0 & \le  \iint |M(x,t) - \tM(y,s)|\p_t w(x,t,y,s)\dx\dt \\
& \qquad + \iint \sgn{M(x,t) - \tM(y,s)}(A(M(x,t))- A(\tM(y,s)))\p_x w(x,t,y,s) \dx \dt \\
& \qquad + \iint (\phi*M)(x,t) (\p_x |M(x,t) - \tM(y,s)|) w(x,t,y,s) \dx \dt.
\end{align*}
We omit the bounds of integration in most of the computation below. Unless otherwise specified, the spatial variables $x$ and $y$ are integrated over $\R$, while the time variables $t,s$ are integrated over $[0,T]$.

We perform the analogous manipulations, with $\widetilde{A}$ replacing
$A$ and the roles of  $M(x,t)$ and $\widetilde{M}(y,s)$ interchanged.
Integrating over the remaining free variables in both cases and adding
the results yields
\begin{align}
  \label{e:Mdoubled}
  0 & \le \iiiint |M(x,t) - \tM(y,s)|(\p_t w+\p_s w)(x,t,y,s)\dx\dt\dy\ds \\
  & \quad + \iiiint \sgn{M(x,t) - \tM(y,s)}\bigg[ (A(M(x,t))- A(\tM(y,s)))\p_x w(x,t,y,s) \nonumber\\
  & \hspace{62 mm} + (\wtA(M(x,t))- \wtA(\tM(y,s)))\p_y w(x,t,y,s)\bigg] \dx \dt \dy \ds \nonumber\\
  & \quad + \iiiint w(x,t,y,s) \bigg[ (\phi*M)(x,t)\p_x |M(x,t) - \tM(y,s)| \nonumber \\
  & \hspace{40mm} + (\phi*\tM)(y,s)\p_y |M(x,t) - \tM(y,s)|\bigg] \dx\dt\dy\ds. \nonumber
\end{align}

We introduce the auxiliary variables
\[
\overline{x} = \tfrac{x+y}{2}, 
\quad 
\overline{y} = \tfrac{x-y}{2},
\quad 
\overline{t} = \tfrac{t+s}{2},
\quad \text{and}\quad
\overline{s} = \tfrac{t-s}{2},
\]
and we take a test function of the form
\[
w(x,t,y,s) 
= b_\e\left( \tfrac{x-y}{2} \right) b_\e\left( \tfrac{t-s}{2} \right)g \left( \tfrac{x+y}{2} \right) h_\d \left( \tfrac{t+s}{2} \right) 
= b_\e(\bary)b_\e(\bars) g(\barx) h_\d(\bart),
\]
where $b_\e, g, h_\d$ are smooth, nonnegative functions satisfying the following properties.
\begin{itemize}
	\item The functions $(b_\e)_{\e>0}$ approximate the Dirac delta distribution as $\e\to 0+$. We take $b_\e$ to be a standard mollifier, supported in $(-\e,\e)$ and having integral $1$.  
	\item The function $g$ is identically $1$ on $[-R(T),R(T)]$ and is compactly supported.
	\item The functions $(h_\d)_{\d>0}$ approximate the indicator function of $[s,t]$ as $\d\to 0+$. We take $h_\d$ to be identically $1$ on $[s,t]$,          identically zero outside $[s-\d, t+\d]$, and linear on $[s-\d, s]$ and $[t,t+\d]$.   
\end{itemize}

To proceed, we shall substitute our test function into \eqref{e:Mdoubled}, using the auxiliary variables. Observe that
\[
\p_t  + \p_s  = \p_{\bart},\quad
\p_x  + \p_y  = \p_{\barx},\quad
\p_x  - \p_y  = \p_{\bary}.
\]
We now rewrite our inequality in terms of the new variables.  In particular, the bracketed part of the second term on the right side of \eqref{e:Mdoubled} can be rewritten as
\[ 
\Big(A_+(M)-A_+(\tM)\Big)\,\p_{\barx} w+\Big(A_-(M)-A_-(\tM)\Big)\,
  \p_{\bary} w,
\]
where we have used the shorthand notation
\[
  A_\pm(m)  := \frac{A(m)\pm \wtA(m)}{2}
\]  
and suppressed the arguments of $M$ and $\tM$.  The latter will be equal to $(x,t)=(\barx+\bary,\bart+\bars)$ and $(y,s)=(\barx-\bary,\bart-\bars)$, respectively, for the rest of the computation below.  

Note that if the fluxes $A$ and $\wtA$ are identical, then $A_+$ reduces to their common value, while $A_-$ vanishes.  Hence, $A_-$ encodes information about stability with respect to the flux.  

Next, we rewrite the bracketed part in the last term of \eqref{e:Mdoubled} (again suppressing arguments) as 
\[
  \frac{\phi*M + \phi*\tM}{2}\cdot \p_{\barx} |M - \tM|
+  \frac{\phi*M - \phi*\tM}{2} \cdot\p_{\bary} |M-\tM|.
\]

Substituting the above into \eqref{e:Mdoubled} then yields
 \begin{align}
   \label{e:Mdoubled2}
   0 \le & \iiiint \bigg[ |M - \tM|\, \p_{\bart}w + \sgn{M - \tM}(A_+(M)- A_+(\tM))\p_{\barx} w\bigg] \dbar\\
   & + \iiiint \sgn{M - \tM}(A_-(M)- A_-(\tM)) \p_{\bary}w \dbar \nonumber  \\
   & + \frac12 \iiiint (\phi*M + \phi*\tM)w\cdot \p_{\barx} |M - \tM| \dbar \nonumber \\
   & + \frac12 \iiiint (\phi*M - \phi*\tM)w\cdot \p_{\bary} |M - \tM| \dbar \nonumber
\end{align}

We want to take $\e\to 0$, which will effectively set $\bary$ and $\bars$ equal to zero.  Before we can do this, however, we need to deal with the $\bary$ derivatives.  We treat the second integrable above first, making use of the following lemma to justify the necessary integration by parts.  

\begin{LEMMA}\label{lem:gamma}
The function 
\[
\g(M,\tM):=\sgn{M - \tM}(A_-(M)-A_-(\tM))
\]
is Lipschitz in both variables $M$ and $\tM$, with
  \[|\g(\cdot,\tM)|_{\Lip}\leq |A_-|_{\Lip},\qquad
  |\g(M,\cdot)|_{\Lip}\leq |A_-|_{\Lip}.\]
\end{LEMMA}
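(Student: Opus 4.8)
The plan is to establish the two Lipschitz bounds by a direct case analysis on the signs of $M-\tM$, exploiting the fact that $\g$ is built from $\sgn$ and the Lipschitz function $A_-$. Since the claim is symmetric in the two arguments (swapping $M\leftrightarrow\tM$ changes $\sgn{M-\tM}$ and the bracket each by a sign, leaving $\g$ unchanged), it suffices to prove the bound in the first variable: for all $M_1, M_2, \tM$ in $[-\tfrac12,\tfrac12]$,
\[
\big|\g(M_1,\tM)-\g(M_2,\tM)\big|\le |A_-|_{\Lip}\,|M_1-M_2|.
\]

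First I would dispose of the easy cases. If $M_1$ and $M_2$ lie on the same side of $\tM$ (both $\ge \tM$ or both $\le \tM$), then $\sgn{M_1-\tM}=\sgn{M_2-\tM}=:\epsilon$ (with the usual convention that the contribution vanishes when either equals $\tM$, so the only genuinely problematic subcase is when one of them equals $\tM$ — handle that by noting $\g(\tM,\tM)=0$ and continuity). In the same-side case,
\[
\g(M_1,\tM)-\g(M_2,\tM)=\epsilon\big(A_-(M_1)-A_-(M_2)\big),
\]
and the bound is immediate from $|A_-|_{\Lip}$. The remaining case is when $M_1$ and $M_2$ straddle $\tM$, say $M_1 \le \tM \le M_2$. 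Then
\[
\g(M_1,\tM)-\g(M_2,\tM)
=-\big(A_-(M_1)-A_-(\tM)\big)-\big(A_-(M_2)-A_-(\tM)\big)
=\big(A_-(\tM)-A_-(M_1)\big)+\big(A_-(\tM)-A_-(M_2)\big),
\]
so
\[
\big|\g(M_1,\tM)-\g(M_2,\tM)\big|\le |A_-|_{\Lip}\big(|\tM-M_1|+|M_2-\tM|\big)=|A_-|_{\Lip}\,|M_1-M_2|,
\]
the last equality because $M_1\le\tM\le M_2$ forces $|\tM-M_1|+|M_2-\tM|=M_2-M_1=|M_1-M_2|$. This covers all cases, and the bound $|\g(\cdot,\tM)|_{\Lip}\le |A_-|_{\Lip}$ follows; the bound $|\g(M,\cdot)|_{\Lip}\le |A_-|_{\Lip}$ follows by the symmetry noted above (or by repeating the argument with roles exchanged). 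In particular $\g$ is Lipschitz jointly in $(M,\tM)$.

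There is no real obstacle here; the only point requiring a little care is the behavior at the diagonal $M=\tM$, where $\sgn$ is discontinuous. The key observation that makes this harmless is that the factor $A_-(M)-A_-(\tM)$ also vanishes there, so $\g$ extends continuously (indeed Lipschitz-continuously) across the diagonal with value $0$; the triangle-inequality computation in the straddling case is exactly what absorbs the jump in $\sgn$. The reason this lemma is recorded is precisely so that, back in \eqref{e:Mdoubled2}, one may apply the $BV$ chain rule (Lemma~\ref{l:BVchainrule}) to $\g(M,\tM)$ as a $BV$ function of $\barx$ (or integrate by parts in $\bary$) with a controlled total-variation bound, uniformly as $\e\to 0$.
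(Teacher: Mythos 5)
Your proof is correct and follows essentially the same route as the paper's: a case analysis according to whether $M_1,M_2$ lie on the same side of $\tM$ or straddle it, with the straddling case handled by the triangle inequality. Your additional observations about the symmetry $\g(M,\tM)=\g(\tM,M)$ and about the continuity across the diagonal are accurate, but they do not change the substance of the argument.
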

\begin{proof}
Fix an $\tM$ and pick $M_1<M_2$.  We consider two cases. First, if $M_1$ and $M_2$ are both greater or both less than $\tM$, then
\[
|\g(M_1,\tM)-\g(M_2,\tM)|=|A_-(M_1)-A_-(M_2)|
\leq |A_-|_{\Lip}|M_1-M_2|.
\]
If on the other hand we have $M_1\le \tM \le M_2$, then
\[
|\g(M_1,\tM)-\g(M_2,\tM)| 
\le  |A_-|_{\Lip}(\tM-M_1)+ |A_-|_{\Lip}(M_2-\tM)
= |A_-|_{\Lip}|M_1-M_2|.
\]
The the estimate $|\g(\cdot,\tM)|_{\Lip}\leq |A_-|_{\Lip}$ follows. The other bound can be obtained in the same way.
\end{proof}
Now, we apply \eqref{eq:BVchain} with $f(z)=\gamma(z,\tM), W=M$ and $f(z)=\gamma(M,z)$, $W=\tM$.  Lemma \ref{lem:gamma} yields
\[
\left|\frac{\dd}{\dd\bary}\g(M,\tM)\right|
\leq |A_-|_{\Lip}\,\left|\p_1 M(\barx+\bary,\bart+\bars)\right|
+ |A_-|_{\Lip} \,\left|\p_1\tM(\barx-\bary,\bart-\bars)\right|. 
\]
Here $\p_1$ denotes differentiation with respect to the first (spatial) argument.

We also use the following estimate in the fourth integral of \eqref{e:Mdoubled2}:
\[
\left|\frac{\dd}{\dd\bary}|M  (\barx+\bary,\bart+\bars)-\tM
(\barx-\bary,\bart-\bars)|\right|\leq \left|\p_1M
(\barx+\bary,\bart+\bars) + \p_1\tM (\barx-\bary,\bart-\bars)\right|.
\]
We now collect all the estimates above and take $\e\to 0$ in \eqref{e:Mdoubled2}.  For simplicity, we revert to the notation $(x,t)$ in rather than $(\barx, \bart)$ in this inequality and the following ones.  We obtain
\begin{align*}
0 \le & \iint \bigg[ |M(x,t) - \tM(x,t)|\,g(x)h'_\d(t) + \sgn{M - \tM}(A_+(M)- A_+(\tM)) g'(x)h_\d(t) \bigg] \dx\dt \\
   & + \iint |A_-|_{\Lip} (|\p_x M|+|\p_x \tM|) g(x)h_\d(t) \dx \dt \\
   & + \frac12 \iint \bigg[ \phi*(M + \tM) \cdot \p_x |M - \tM| + (\phi*|M-\tM|)\cdot |\p_x M+\p_x \tM| \bigg] g(x)h_\d(t) \dx \dt. 
\end{align*}

Next, we recall our choices of $g$ and $h_\d$. We can drop the second term in the first integral above, since $g'\equiv 0$ in $[-R(T),R(T)]$; we also replace $g$ by $1$ for the rest of the terms. Taking $\d\to 0$, we get 
\begin{align}\label{e:Mdoubled3}
   & \int |M(x,t) - \tM(x,t)|\dx \\
   & \leq \int |M(x,s) - \tM(x,s)|\dx +
     \frac12 |A-\wtA|_{\Lip}
     \int_s^t \int |\p_x M| + |\p_x \tM| \dx\,\dd\tau \nonumber\\
   & \quad + \frac12 \int_s^t \int \bigg[ \Phi*(\p_x M + \p_x \tM) \cdot \p_x|M-\tM| + |\p_x M + \p_x\tM| (\Phi*\p_x|M-\tM|)\bigg] \dx\,\dd\tau.\nonumber
\end{align}
Up to this point, we have not used the fact that $M$ and $\widetilde{M}$ are nondecreasing.  We take advantage of it in this final step by replacing $|\p_x M|$ with $\p_x M$, etc.  Under these replacements, the second term in \eqref{e:Mdoubled3} becomes $|A-\wtA|_{\Lip}(t-s)$, while the last term in \eqref{e:Mdoubled3} vanishes identically due to the oddness of $\Phi$. The stability bound \eqref{e:stability} follows immediately, upon taking $s=0$.

\section{The entropic selection principle for the Euler-alignment System}

\label{s:EArecovery}

In this section, we come back to our main 1D Euler-alignment system \eqref{eq:1DEA}. Recall
\begin{equation}
\label{eq:EA}
\left\{
\begin{array}{rcl}
\p_t \rho + \p_x (\rho u) & = & 0, \\
\p_t (\rho u) + \p_x (\rho u^2) & = & \rho (\phi*(\rho u)) - \rho u (\phi*\rho),
\end{array}
\right.\qquad
\left\{
\begin{array}{l}
  \rho(x,0)=\rho^0(x),\\
  u(x,0)=u^0(x).
\end{array}
\right.
\end{equation}

We construct a uniquely determined weak solution of \eqref{eq:EA}, using the entropy conditions \eqref{e:entropyineq} for the scalar balance law \eqref{e:balancelaw} in our selection principle.  Theorem \ref{def:solution} details the process by which our solution is constructed; we prove that the resulting object meets the requirements of Definition \ref{def:wksoln} below.  Finally, we explicitly connect our solution to the sticky particle dynamics \eqref{eq:CS}--\eqref{e:momentumconscollision}.  We demonstrate in Theorem \ref{t:SPconv} that the sticky particle Cucker--Smale dynamics can always be used to approximate the solution. Moreover, Theorem \ref{t:approxrate} gives a much stronger conclusion under additional hypotheses, by fashioning an explicit rate of convergence of the sticky particle approximation for the density profile.  

Let us denote by $\cP_c(\R)$ the space of probability measures with compact support. We will use the Wasserstein-1 metric to quantify the distance between elements of $\cP_c(\R)$.
\begin{DEF}[Wasserstein-1 metric]
  Let $\rho, \trho \in \cP_c(\R)$.
  The Wasserstein-1 distance between them is 
  \[\W_1(\rho, \trho) = \sup_{\Lip(f)\le 1} \left| \int_{\R} f(x)\dd\rho(x) - \int_{\R} f(x)\dd\trho(x) \right|.\]
\end{DEF}
It is well-known that if $M$ and $\tM$ are cumulative distribution
functions for $\rho$ and $\trho$ defined in \eqref{eq:MQ},
respectively, then $\W_1(\rho, \trho)= \|M-\tM\|_{L^1}$.
We will consider $\cP_c(\R)$ equipped with the Wasserstein-1 metric. In this setting, $\W_1$ convergence is equivalent to weak-$*$ convergence in the sense of measures.

Next, we make precise what we mean by a weak solution of
\eqref{eq:EA}.


\begin{DEF}[Weak solution]\label{def:wksoln}
  Let $\rho^0 \in \cP_c(\R)$ and $u^0 \in L^\infty(\dd\rho^0)$.
  Define $P^0=\rho^0u^0$, which lies in the space of signed measures $\cM(\R)$. We say that  $(\rho,P) = (\rho, \rho u)$ is a \emph{weak solution} to the
  Euler-alignment system \eqref{eq:EA} if for any $T>0$,
  \begin{itemize}
   \item $\rho\in C([0,T]; \cP_c (\R))$.
   \item $P(\cdot,t)\in \cM(\R)$ for any $t\in[0,T]$. Moreover,
    $P(\cdot,t)$ is absolutely continuous with respect to $\rho(\cdot,t)$,
    with the Radon-Nikodym derivative $u(\cdot,t)\in
    L^\infty(\dd\rho(t))$, where
    $u(\cdot,t)\dd\rho(\cdot,t)=\dd P(\cdot,t)$, for any $t\in[0,T]$.
    \item $(\rho,u)$ satisfies \eqref{eq:EA} in the sense of
      distributions.
    \item The initial condition $(\rho^0, P^0)$ is attained in the following weak
      sense for every $f\in C_c^\infty(\R)$:
      \begin{equation}
	\label{e:EAIC}
	\lim_{t\to 0+} \int_\R f(x) \dd\rho(x,t) = \int_\R f(x) \dd\rho^0(x);
	\quad 
	\lim_{t\to 0+} \int_\R f(x) \dd P(x,t) =  \int_\R f(x)\dd P^0(x).
      \end{equation}
  \end{itemize}
\end{DEF}

\subsection{Construction of the solution}
Let us start by introducing the generalized inverse of a nondecreasing function $M$, defined as
\[
M^{-1}(m) = \inf \{x\in\R : M(x)\ge m\},\qquad m\in(-\tfrac12, \tfrac12].
\]
It is a left-continuous function.

Now, we construct our solution through the procedure in the following theorem, which aligns with the formal derivation in Section \ref{ss:formalderiv}.
\begin{THEOREM}[The entropic selection principle]\label{def:solution}
  Let $\rho^0\in\cP_c(\R)$, $u^0\in L^\infty(\dd\rho^0)$ and
  $P^0=\rho^0u^0$.
  We construct a unique pair $(\rho, P)$ from the following procedure:
 \begin{itemize}
 \item[\textup{(i)}] Let $M^0(x) = \rho^0((-\infty, x])-\tfrac12$ and $\psi^0 = u^0 +
   \Phi*\rho^0$. Define a Lipschitz flux $A:[-\frac12,\frac12]\to\R$~by
   \begin{equation}\label{eq:A}
     A(m)=\int_{-\frac12}^ma(m')\dd m', \quad
     \text{where} \,\,\; a(m) = \psi^0 \circ (M^0)^{-1} (m).
   \end{equation}
 \item[\textup{(ii)}] Let $M$ be the unique entropy solution of \eqref{e:balancelaw} associated to the initial data $M^0$ and the flux $A$.  
 \item[\textup{(iii)}] Define $(\rho, P)$ from $M$ via the formulas
   \begin{equation}\label{eq:recover}
     \rho = \p_x M, \qquad P = -\p_t M = \p_x(A\circ M) - (\phi*M)\p_x M.
   \end{equation}
 \end{itemize}
 Then $(\rho, P)$ is a weak solution of the 1D Euler-alignment system
 \eqref{eq:EA} in the sense of Definition \ref{def:wksoln}.
 Moreover, we can define $u(\cdot, t)=\frac{\dd P(t)}{\dd\rho(t)}$ to be
 the Radon-Nikodym derivative of $P(t)$ with respect to $\rho(t)$.
$u(t)$ is uniquely defined $\rho(t)$-a.e. 
\end{THEOREM}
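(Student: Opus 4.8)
The plan is to verify the four requirements of Definition~\ref{def:wksoln} one at a time, reading off all the needed regularity of $(\rho,P)$ from the $BV$ theory for $M$ in Theorem~\ref{t:M} and deriving the two evolution equations directly from the scalar balance law~\eqref{e:balancelaw} --- which $M$ satisfies in the distributional sense, since an entropy solution is in particular a weak solution (use $\eta=\pm\mathrm{id}$, $q=\pm A$ in~\eqref{e:entropyineqdist}). The sticky-particle approximation of Theorem~\ref{t:discr} and Theorem~\ref{t:M}(b) is available as an alternative route for the PDE and will also be convenient for identifying the initial data.

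First I would record the structure of $\rho$ and $P$. By Theorem~\ref{t:M}, $M\in BV(\R\times[0,T])$, and each slice $M(\cdot,t)$ is nondecreasing with $M(\pm x,t)=\pm\tfrac12$ for $x\ge R(T)$; hence $\rho(\cdot,t)=\p_x M(\cdot,t)$ is, for every $t$, a probability measure supported in $[-R(T),R(T)]$, and the time-regularity estimate $\|M(\cdot,t)-M(\cdot,s)\|_{L^1}\le C|t-s|$ together with the identity $\W_1(\rho(t),\rho(s))=\|M(\cdot,t)-M(\cdot,s)\|_{L^1}$ gives $\rho\in\Lip\bigl([0,T];\cP_c(\R)\bigr)\subset C\bigl([0,T];\cP_c(\R)\bigr)$. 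For $P$ I would use the second formula in~\eqref{eq:recover}, $P(\cdot,t)=\p_x(A\circ M)(\cdot,t)-(\phi*M)(\cdot,t)\,\rho(\cdot,t)$: Lemma~\ref{l:BVchainrule} gives $A\circ M(\cdot,t)\in BV_{\loc}(\R)$ with $|\p_x(A\circ M)(\cdot,t)|\le|A|_{\Lip}\,\rho(\cdot,t)$, and this function is constant outside $[-R(T),R(T)]$, so $\p_x(A\circ M)(\cdot,t)\in\cM(\R)$ is absolutely continuous with respect to $\rho(\cdot,t)$; since $(\phi*M)(\cdot,t)$ is bounded and continuous (Remark~\ref{r:conv}), so is the product term, and therefore $P(\cdot,t)\in\cM(\R)$ with $P(\cdot,t)\ll\rho(\cdot,t)$. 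The Radon--Nikodym density $u(\cdot,t)=\frac{\dd P(t)}{\dd\rho(t)}$ is then uniquely determined $\rho(t)$-a.e., lies in $L^\infty(\dd\rho(t))$ with $\|u(\cdot,t)\|_{L^\infty}\le|A|_{\Lip}+\Phi(2R(T))$, and, writing $\psi(\cdot,t)=\frac{\dd(\p_x(A\circ M)(t))}{\dd\rho(t)}$ and using $\phi*M=\Phi*\rho$ from Remark~\ref{r:conv}, satisfies $u=\psi-\Phi*\rho$, i.e.\ the relation~\eqref{e:psidef}.

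Next I would establish the evolution equations. From the distributional balance law, $\p_t M=-\p_x(A\circ M)+(\phi*M)\p_x M=-P$ in $\cD'(\R\times(0,T))$, and since $\rho u=P$ as measures the continuity equation is immediate: $\p_t\rho+\p_x(\rho u)=\p_x\p_t M+\p_x P=\p_x(\p_t M+P)=0$. For the momentum equation I would work first with the pair $(\rho,\rho\psi):=(\p_x M,\p_x(A\circ M))$ and target the system~\eqref{e:rhorhopsi}. Vol'pert's $BV$ calculus \cite{Volpert1967}, applied to $A\circ M\in BV(\R\times[0,T])$, produces a \emph{single} averaged superposition $\widehat{A'}(M)$ with $\nabla_{t,x}(A\circ M)=\widehat{A'}(M)\,\nabla_{t,x}M$ as vector measures; comparing spatial components identifies $\psi=\widehat{A'}(M)$ ($\rho$-a.e.), and the time component then gives $\p_t(A\circ M)=\widehat{A'}(M)\,\p_t M=-\psi P=-\rho\psi u$. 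Hence $\p_t(\rho\psi)+\p_x(\rho\psi u)=\p_x\bigl(\p_t(A\circ M)+\rho\psi u\bigr)=0$, which together with the continuity equation is precisely~\eqref{e:rhorhopsi}. Finally I would substitute $\rho\psi=\rho u+(\Phi*\rho)\rho$, use $\p_t\rho=-\p_x(\rho u)$, the convolution identities $\p_x(\Phi*\rho)=\phi*\rho$ and $\p_x(\Phi*P)=\phi*P$, and the product rule; the terms carrying $(\Phi*\rho)\p_x P$ cancel, and what remains is $\p_t(\rho u)+\p_x(\rho u^2)=\rho(\phi*(\rho u))-(\rho u)(\phi*\rho)$, the momentum equation of~\eqref{eq:EA}.

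Finally, the initial data and the main difficulty. Since $M\in C([0,T];L^1)$ with $M(\cdot,0)=M^0$, we have $\rho(\cdot,0)=\p_x M^0=\rho^0$, so $\rho(t)\stackrel{*}{\rightharpoonup}\rho^0$ by the time-continuity just proved. For $P$, I would write $\int f\,\dd P(t)=-\int f'(x)(A\circ M)(x,t)\dx-\int f(x)(\phi*M)(x,t)\,\dd\rho(x,t)$: the first term is continuous in $t$ since $A\circ M\in C([0,T];L^1)$, and the second tends as $t\to0+$ to $\int f\,(\phi*M^0)\,\dd\rho^0$ by the uniform-in-$(x,t)$ convergence $\phi*M(t)\to\phi*M^0$ from Section~\ref{ss:existence} together with $\rho(t)\stackrel{*}{\rightharpoonup}\rho^0$; since the construction~\eqref{eq:A} of $A$ gives $\p_x(A\circ M^0)=\rho^0\psi^0$, the limit equals $\int f\,\psi^0\,\dd\rho^0-\int f\,(\Phi*\rho^0)\,\dd\rho^0=\int f\,u^0\,\dd\rho^0=\int f\,\dd P^0$, which is~\eqref{e:EAIC}. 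The crux of the argument is the momentum step: one must make Vol'pert's calculus carry the algebraic relation $\rho\psi=\p_x(A\circ M)$ from a fixed time slice into space-time, so that the \emph{same} coefficient governs $\p_x(A\circ M)$ and $\p_t(A\circ M)$; and one must keep the possibly singular quantities $\rho(\phi*(\rho u))$ and $(\rho u)(\phi*\rho)$ paired throughout, since individually they need not be locally finite measures when $\rho(\cdot,t)$ carries atoms, whereas their difference always is (the diagonal singularities cancel --- the continuum shadow of the exclusion $x_j\ne x_i$ in~\eqref{eq:CS}); this is exactly why~\eqref{e:rhorhopsi} is used as the intermediate target. Should one wish to avoid the space-time $BV$ calculus, the evolution equations can instead be obtained by passing to the limit in the sticky-particle solutions of Theorem~\ref{t:discr}, where $\rho_N=\sum_i m_i\delta_{x_i(t)}$, $\rho_N\psi_N=\sum_i m_i\psi_i(t)\delta_{x_i(t)}$ and $P_N=\sum_i m_i v_i(t)\delta_{x_i(t)}$ satisfy~\eqref{e:rhorhopsi} exactly --- by a direct computation from~\eqref{eq:CS}, \eqref{e:psiidot=0}, \eqref{e:psicollision}, and continuity of the trajectories across collisions --- after which Theorem~\ref{t:M}(b) passes everything to the limit; there the one genuinely delicate point is the convergence of the nonlinear flux $\rho_N\psi_N u_N$, which I would secure by upgrading to strong convergence in the mass coordinate $m\mapsto M_N^{-1}(m,t)$ with the help of the $N$-independent velocity bound~\eqref{e:viN0bd}.
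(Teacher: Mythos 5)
Your proposal is correct and follows essentially the same route as the paper: you use the space--time $BV$ chain rule (Vol'pert; the paper cites \cite[Lemma 4.2]{BouchutJames1999}) to extract a single coefficient $\psi$ satisfying both $\p_x(A\circ M)=\psi\,\p_x M$ and $\p_t(A\circ M)=\psi\,\p_t M$, pass through the intermediate system~\eqref{e:rhorhopsi} for $(\rho,\rho\psi)$, and then recover the momentum equation by substituting $u=\psi-\Phi*\rho$; the verification of the initial data via the weak formulation and the uniform convergence of $\phi*M$ is likewise the same. The only genuine departures from the paper are cosmetic: you make explicit the observation that $\rho(\phi*(\rho u))$ and $\rho u(\phi*\rho)$ need not be individually well-defined measures for weakly singular $\phi$ and atomic $\rho$ (the paper leaves this implicit, treating the right-hand side of~\eqref{eq:EA}$_2$ only as a distribution), and you sketch an alternative via the sticky-particle limit that the paper does not pursue in this theorem.
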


\begin{proof}
Our first step is to check that $M^0$ and $A$ satisfy the assumptions of Theorem \ref{t:M}. The required properties of $M^0$ follow directly from the fact that $\rho^0$ is a nonnegative, compactly supported probability measure.  Indeed, the number $R^0$ can be chosen so that $\supp\rho^0\subset[-R^0,R^0]$. As for $A$, we note that
\[
\|\psi^0\|_{L^\infty(\dd\rho^0)}\leq\|u^0\|_{L^\infty(\dd\rho^0)}+\Phi(2R^0),
\]
which is bounded.  It follows that $A$ as defined in \eqref{eq:A} is Lipschitz.  Since $M^0$ and $A$ are of the desired form, we can apply Theorem \ref{t:M} and obtain a unique entropy solution $M\in BV(\R\times[0,T])$ of \eqref{eq:M}, for any fixed time $T>0$.  

Now, we verify $(\rho, P)$ is a weak solution of \eqref{eq:EA}.
First, $\rho\in C([0,T]; \cP_c(\R))$ follows from
\eqref{eq:Mlimit}, and $P(t)=-\p_tM(t)\in \cM(\R)$ is a direct
consequence of \eqref{eq:Mtlimit}.
Let us turn our attention to $u$.
Since $M\in BV(\R\times[0,T])$, we can perform $BV$ calculus,
e.g. \cite[Lemma 4.2]{BouchutJames1999}, and deduce that
there exists a measurable function $\psi=\psi(x,t)$, bounded by $|A|_{\Lip}$ and uniquely defined $\rho(t)$-a.e., such that
\begin{equation}
  \label{e:gradAM}
  \p_x(A(M)) = \psi \p_x M, 
  \qquad 
  \p_t(A(M)) = \psi \p_t M,
\end{equation}
in the sense of measures. Then $P(t)$ defined by \eqref{eq:recover} is given by 
\[
P(t) = (\psi - (\phi*M))\p_x M(t) = (\psi - \Phi*\rho) \rho(t),
\]
so that $u = \psi - \Phi * \rho$ inherits the required boundedness and uniqueness properties from $\psi$ and $\rho$.

Next, we show that $(\rho, u)$ satisfies \eqref{eq:EA} in a
distributional sense.  From \eqref{eq:M}, \eqref{eq:recover}  and \eqref{e:gradAM} we get
\begin{align*}
& \p_t \rho 
= \p_t (\p_x M) = \p_x (\p_t M) = -\p_x P = -\p_x (\rho u),\\
& \p_t(\rho \psi) = \p_{tx}^2 (A(M)) = \p_x(\psi \p_tM)
= -\p_x( \psi P) = -\p_x (\rho u \psi).
\end{align*}
We then recover the momentum equation in exactly the form \eqref{eq:EA}$_2$ as follows:
\begin{align*}
\p_t (\rho u) + \p_x(\rho u^2) 
& = \big(\p_t (\rho \psi) + \p_x(\rho u\psi)\big) - \p_t\big(\rho (\Phi*\rho)\big) - \p_x\big(\rho u (\Phi*\rho)\big) \\
& = -  \big(\p_t \rho + \p_x(\rho u)\big)(\Phi*\rho) +\rho\big(\Phi*\p_x(\rho u)\big) - \rho u \big((\p_x \Phi)*\rho\big) \\
& = \rho (\phi*(\rho u)) - \rho u (\phi*\rho).
\end{align*}

Finally, we check the initial conditions \eqref{e:EAIC}.
From continuity of $\rho$ in time, we have $\W_1(\rho(t),\rho^0)\to0$,
which implies the first equation in \eqref{e:EAIC}.
For the second equation, we apply \eqref{eq:recover} to obtain
\[\int_\R f(x) \dd P(x,t) = -\int_\R f'(x) A(M(x,t))\dx - \int_\R f(x)(\phi*M)(x,t)\dd\rho(x,t).\]
We can pass to the limit as $t\to0+$ for the two terms separately.  For the first one, we have
\[
\left|\int_\R f'(x) A(M(x,t))\dx -\int_\R f'(x) A(M^0(x))\dx
\right|\leq |A|_{\Lip}\|f'\|_{L^\infty}\|M(\cdot,t)-M^0\|_{L^1}\to 0.
\]
As for the second one, we write
\begin{align*}
  &\left|\int_\R f(x)(\phi*M)(x,t)\dd\rho(x,t) -\int_\R f(x)(\phi*M^0)(x)\dd\rho^0(x)
  \right|\\
  &\leq \left|\int_\R f(x)\Big((\Phi*\rho)(x,t)-(\Phi*\rho^0)(x)\Big)\dd\rho(x,t) \right| + \left|\int_\R f(x)(\Phi*\rho^0)(x)\big[ \dd\rho(x,t)- \dd\rho^0(x) \big] 
    \right|.
\end{align*}
We note that the weak-$*$ convergence $\rho(t)\stackrel{*}{\rightharpoonup} \rho^0$ implies that the second term above vanishes as $t\to 0+$, and also that $\Phi*\rho(t)\to \Phi*\rho^0$ pointwise; the latter allows us to conclude that the first term also vanishes as $t\to 0+$, after an application of the dominated convergence theorem. 

From our construction, we have
$P(0) = (\psi^0-\Phi*\rho^0)\rho^0 = \rho^0 u^0 = P^0$, so the above calculations finish the proof.
\end{proof}

\vspace{- 3mm}
\subsection{Approximation by sticky particle Cucker--Smale dynamics}

\label{ss:approx}
One of the most important features of our entropic selection principle is that it associates to atomic initial data a solution of the Euler-alignment system that is described by the sticky particle Cucker--Smale dynamics. The proposition below gives the precise statement.  

\begin{PROP}
  Consider the 1D Euler-alignment system \eqref{eq:EA} with atomic initial data
  \begin{equation}\label{eq:rhoPN0}
    \rho_N^0(x) = \sum_{i=1}^{N} m_{i,N} \d(x-x_{i,N}^0),
    \qquad 
    P_N^0(x) = \sum_{i=1}^{N} m_{i,N} v_{i,N}^0 \d(x-x_{i,N}^0),
  \end{equation}
  where the $x_{i,N}^0$'s all belong to a fixed compact set and the $m_{i,N}$'s satisfy \eqref{e:mass1discrete}. Let $(x_{i,N}(t), v_{i,N}(t))$ be the solution of the sticky   particle Cucker--Smale dynamics with initial data $(m_{i,N}, x_{i,N}^0, v_{i,N}^0)_{i=1}^{N}$. The solution of the 1D Euler-alignment system selected by the procedure in Theorem \ref{def:solution} takes the form
  \begin{equation}\label{eq:rhoPNt}
    \rho_N(x,t) = \sum_{i=1}^{N} m_{i,N} \d(x-x_{i,N}(t)),
    \qquad 
    P_N(x,t) = \sum_{i=1}^{N} m_{i,N} v_{i,N}(t) \d(x-x_{i,N}(t)).
  \end{equation}
\end{PROP}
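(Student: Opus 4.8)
The plan is to unwind the construction in Theorem \ref{def:solution} for the atomic data \eqref{eq:rhoPN0} and recognize in it the output of Theorem \ref{t:discr}. First I would compute the objects in step (i). Since $\rho_N^0=\sum_i m_{i,N}\d(x-x_{i,N}^0)$ with $x_{1,N}^0\le\cdots\le x_{N,N}^0$, the associated cumulative distribution function is $M^0(x)=-\tfrac12+\sum_i m_{i,N}H(x-x_{i,N}^0)$, which is exactly of the form \eqref{eq:MN0}; its range consists of the values $\th_{i,N}$ from \eqref{eq:theta}, and its generalized inverse satisfies $(M^0)^{-1}(m)=x_{i,N}^0$ for $m\in(\th_{i-1,N},\th_{i,N}]$. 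Evaluating $\psi^0=u^0+\Phi*\rho^0$ at $x_{i,N}^0$ gives $\psi^0(x_{i,N}^0)=v_{i,N}^0+\sum_j m_{j,N}\Phi(x_{i,N}^0-x_{j,N}^0)$, which is precisely the discrete quantity $\psi_{i,N}^0$ built from \eqref{eq:psiit}. Hence the density $a(m)=\psi^0\circ(M^0)^{-1}(m)$ in \eqref{eq:A} equals the constant $\psi_{i,N}^0$ on each interval $(\th_{i-1,N},\th_{i,N})$, so $A$ is continuous and piecewise linear with breakpoints exactly at the $\th_{i,N}$'s and slope $\psi_{i,N}^0$ on its $i$-th piece; that is, $A$ satisfies hypothesis \eqref{eq:AN} and coincides with the flux $A_N$ appearing in Theorem \ref{t:discr}.

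With $M^0=M_N^0$ and $A=A_N$ identified, I would invoke Theorem \ref{t:discr}. The initial velocities it prescribes for the sticky particle dynamics, namely $v_i^0=\psi_i^0-\sum_j m_{j,N}\Phi(x_{i,N}^0-x_{j,N}^0)$ via \eqref{e:SPCSapprox0}, reduce to the given $v_{i,N}^0$ by the computation just made. Therefore the entropy solution produced by Theorem \ref{t:discr} is $M_N(x,t)=-\tfrac12+\sum_i m_{i,N}H(x-x_{i,N}(t))$, where $(x_{i,N}(t),v_{i,N}(t))$ is exactly the sticky particle Cucker--Smale flow in the statement; and \eqref{e:QN} gives $A\circ M_N(x,t)=A(-\tfrac12)+\sum_i m_{i,N}\psi_{i,N}(t)\,H(x-x_{i,N}(t))$. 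By the uniqueness half of Theorem \ref{t:M}(a), the entropy solution $M$ produced in step (ii) of Theorem \ref{def:solution} must equal this $M_N$.

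It then remains to read off $(\rho,P)$ from \eqref{eq:recover}. Differentiating in $x$ gives $\rho(t)=\p_x M_N(t)=\sum_i m_{i,N}\d(x-x_{i,N}(t))=\rho_N(x,t)$. For the momentum, $\p_x(A\circ M_N)(t)=\sum_i m_{i,N}\psi_{i,N}(t)\,\d(x-x_{i,N}(t))$ by \eqref{e:QN}, while $(\phi*M)\p_x M(t)=\sum_i m_{i,N}\,(\phi*M)(x_{i,N}(t),t)\,\d(x-x_{i,N}(t))$ because $\phi*M$ is continuous (Remark \ref{r:conv}); moreover $(\phi*M)(x_{i,N}(t),t)=(\Phi*\rho)(x_{i,N}(t),t)=\sum_j m_{j,N}\Phi(x_{i,N}(t)-x_{j,N}(t))$, again by Remark \ref{r:conv}. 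Subtracting and using the definition \eqref{eq:psiit} of $\psi_{i,N}(t)$ yields $P(t)=\sum_i m_{i,N}\big(\psi_{i,N}(t)-\sum_j m_{j,N}\Phi(x_{i,N}(t)-x_{j,N}(t))\big)\d(x-x_{i,N}(t))=\sum_i m_{i,N}v_{i,N}(t)\,\d(x-x_{i,N}(t))=P_N(x,t)$, which is the desired formula.

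The only point requiring genuine care—and the main, though mild, obstacle—is the bookkeeping when several $x_{i,N}^0$ coincide: then $(M^0)^{-1}$ skips the corresponding values $\th_{i,N}$, and the value of $u^0$ (hence of $\psi^0$) at the common location is forced by $P_N^0=\rho_N^0 u^0$ to be the mass-weighted average of the relevant $v_{i,N}^0$'s, which is exactly the value produced by applying the collision rule \eqref{e:momentumconscollision} at $t=0$. With this observation the identification above is unaffected, provided one regards the sticky particle flow as already absorbing any initial overlaps; when the $x_{i,N}^0$ are distinct no such remark is needed.
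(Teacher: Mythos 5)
Your proof is correct and follows essentially the same route as the paper's: unwind the construction of Theorem \ref{def:solution} for atomic data, identify the resulting $M^0$ and $A$ with the discrete objects $M_N^0$ and $A_N$ required by Theorem \ref{t:discr}, invoke uniqueness of the entropy solution, and read off $(\rho_N,P_N)$ from \eqref{eq:recover} using \eqref{e:QN}. The closing remark about coinciding $x_{i,N}^0$ is a valid clarification of a point the paper leaves implicit, but it does not change the structure of the argument.
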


\begin{proof}
Let $M_N^0$, $\psi_N^0$, $A_N$, and $a_N$ be defined as in the step (i) of
the procedure in Theorem \ref{def:solution}. Let $\th_{i,N} =
\sum_{j=1}^i m_{j,N}$ for $i = 0,\ldots, N$.  Clearly $M_N^0$ can be
expressed as
\begin{equation}\label{eq:MNzero}
  M_N^0(x) = -\frac12 + \sum_{i=1}^N m_{i,N} H(x-x_{i,N}^0),
\end{equation}
and we have
\[
(M_N^0)^{-1}(m) = x_{i,N}^0, 
\qquad m\in (\th_{i-1},\th_i], \;i=1, \ldots, N.
\]
It follows that $a_N$ is piecewise constant and $A_N$ is piecewise linear, with breakpoints at the $\th_i$'s.  Thus the hypotheses of Theorem \ref{t:discr} are satisfied.  Furthermore, the quantities $\psi_{i,N}^0$ defined as in \eqref{eq:psiAn} satisfy 
\[
\psi_{i,N}^0:=A_N'(m) = a_N(m) = \psi(x_{i,N}^0), 
\qquad m\in (\th_{i-1},\th_i).
\]
We define $\widetilde{v}_{i,N}^0$ by \eqref{e:SPCSapprox0} and verify that it coincides with $v_{i,N}^0$ from \eqref{eq:rhoPN0}:
\[
\widetilde{v}_{i,N}^0 := \psi_{i,N}^0 - \sum_{j=1}^N m_{j,N} \Phi(x_{i,N}^0 - x_{j,N}^0) =  (\psi_N^0 - \Phi*\rho_N^0)(x_{i,N}^0) = v_{i,N}^0.
\]
Next, we apply Theorem \ref{t:discr} to obtain the entropy solution $M_N$ of \eqref{e:MN} associated to $M_N^0$ and $A_N$. According to \eqref{e:MNsoln} and \eqref{e:QN}, we have 
\begin{equation}
\label{e:MNQN}
M_N(x,t) = -\frac12 + \sum_{i=1}^{N} m_{i,N} H(x-x_{i,N}(t)),
\end{equation}
\begin{equation*}
A_N\circ M_N(x,t)= \sum_{i=1}^N m_{i,N}\psi_{i,N}(t)H(x-x_{i,N}(t)),
\end{equation*}

where 
\[
\psi_{i,N}(t) = v_{i,N}(t) + \sum_{j=1}^N m_{j,N} \Phi(x_{i,N}(t) - x_{j,N}(t)),\quad i=1, \ldots, N.
\]
It immediately follows that $\rho_N=\p_x M_N$ is given by \eqref{eq:rhoPNt}. 

As for $P_N$, we write
\begin{align*}
 P_N(x,t) &= \p_x(A_N\circ M_N)(x,t) - (\phi*M_N)\p_x M_N(x,t) \\
&= \sum_{i=1}^N m_{i,N} \psi_{i,N}(t) \d(x-x_i(t)) - (\Phi*\rho_N)(x,t) \sum_{i=1}^N m_{i,N} \d(x-x_{i,N}(t)) \\
& = \sum_{i=1}^N m_{i,N} \left[ \psi_{i,N}(t) - \sum_{j=1}^N m_{j,N}\Phi(x_{i,N}(t)-x_{j,N}(t)) \right] \d(x-x_{i,N}(t)) \\ & = \sum_{i=1}^N m_{i,N} v_{i,N}(t) \d(x-x_{i,N}(t)),
\end{align*}
which finishes the proof.
\end{proof}
A direct application of Theorem \ref{t:M}(b) gives a convergence result for the sticky particle approximation.

\begin{THEOREM}
\label{t:SPconv}
Let $(\rho^0, P^0)$ satisfy the hypotheses of Theorem
\ref{def:solution}, and let $(\rho, P)$ be the unique weak solution to
\eqref{eq:EA} that it generates.  There exists a sequence of
(explicitly constructed) atomic initial data $(\rho^0_N, P^0_N)_{N=1}^\infty$
such that if $(\rho_N, P_N)$ denotes the solution associated to
$(\rho_N^0, P_N^0)$ by the entropic selection principle, then for any
time $t>0$, as $N\to\infty$, we have
\begin{equation}\label{eq:W1conv}
   \W_1(\rho_N(t), \rho(t)) \to 0,
\end{equation}
and
\begin{equation}\label{eq:Pconv}
  P_N(t)\stackrel{*}{\rightharpoonup} P(t),\quad\text{in}\,\,\M(\R).
\end{equation}
\end{THEOREM}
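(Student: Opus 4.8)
The plan is to assemble the theorem directly from Theorem~\ref{t:M}(b) together with the preceding Proposition, with essentially no new work beyond a careful identification of the discrete objects on the two sides. First I would name the approximating atomic data explicitly: pick masses $(m_{i,N})_{i=1}^N$ obeying \eqref{e:mass1discrete} and \eqref{eq:miN} (e.g.\ $m_{i,N}=\tfrac1N$), define positions $x_{i,N}^0$ by \eqref{eq:xiN0} using the cumulative distribution $M^0$ of $\rho^0$, let $A_N$ be the piecewise-linear interpolant of the flux $A$ from \eqref{eq:A} determined by \eqref{eq:ANdef}, let $\psi_{i,N}^0$ be the slopes of $A_N$ as in \eqref{eq:psiAn}, and set $v_{i,N}^0$ by \eqref{e:SPCSapprox0}. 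Then $\rho_N^0=\sum_i m_{i,N}\d(\cdot-x_{i,N}^0)$ and $P_N^0=\sum_i m_{i,N}v_{i,N}^0\d(\cdot-x_{i,N}^0)$ are exactly the data $(\rho_N^0,P_N^0)$ in the statement; by Lemma~\ref{lem:approx0} they converge to $(\rho^0,P^0)$ in the appropriate sense, although this is not even needed for the conclusion.

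Second I would verify that running the entropic selection principle of Theorem~\ref{def:solution} on $(\rho_N^0,P_N^0)$ reproduces precisely the discretized balance law and sticky particle objects. Step (i) of Theorem~\ref{def:solution} applied to $(\rho_N^0,P_N^0)$ returns the step function $M_N^0$ of \eqref{eq:MNzero} and a piecewise-linear flux whose slope on $(\th_{i-1,N},\th_{i,N})$ equals $\psi_N^0\big((M_N^0)^{-1}(m)\big)=v_{i,N}^0+\sum_j m_{j,N}\Phi(x_{i,N}^0-x_{j,N}^0)=\psi_{i,N}^0$, using \eqref{e:SPCSapprox0}; since this flux agrees with $A$ at every breakpoint $\th_{i,N}$, it coincides with $A_N$. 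Thus step (ii) produces the unique entropy solution $M_N$ of the discretized balance law \eqref{e:MN} associated to $(M_N^0,A_N)$ --- the very $M_N$ of Theorem~\ref{t:M}(b) --- and by the Proposition above, $(\rho_N,P_N)$ defined via \eqref{eq:recover} is the sticky particle Cucker--Smale dynamics \eqref{eq:rhoPNt}. In particular $\rho_N(t)=\p_xM_N(t)$ and $P_N(t)=-\p_tM_N(t)$ for every $t$.

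Third I would translate the convergences of Theorem~\ref{t:M}(b). Fix $T>0$. Since $M$ is the unique entropy solution of \eqref{e:balancelaw} with the data $(M^0,A)$ attached to $(\rho^0,P^0)$ by Theorem~\ref{def:solution}, Theorem~\ref{t:M}(b) gives $M_N\to M$ in $C([0,T];L^1(\R))$ and $\p_tM_N(\cdot,t)\stackrel{*}{\rightharpoonup}\p_tM(\cdot,t)$ in $\cM(\R)$ for every $t\in[0,T]$; by uniqueness of the entropy solution the whole sequence converges, so no subsequence extraction is needed. Using $\W_1(\rho_N(t),\rho(t))=\|M_N(\cdot,t)-M(\cdot,t)\|_{L^1}$ from the cumulative-distribution characterization of $\W_1$, the first convergence is \eqref{eq:W1conv}; the second gives $P_N(t)=-\p_tM_N(t)\stackrel{*}{\rightharpoonup}-\p_tM(t)=P(t)$, which is \eqref{eq:Pconv}. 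As $T$ was arbitrary, both hold for all $t>0$.

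The argument has no genuinely hard step: Theorem~\ref{t:SPconv} is a corollary of Theorem~\ref{t:M}(b) and the Proposition. The only point demanding care is the identification in the second paragraph --- confirming that the atomic data generated by the discretization scheme of Section~\ref{ss:existence} is exactly the data on which the entropic selection principle returns the sticky particle dynamics, so that $M_N$, $\rho_N$, $P_N$ in the two halves of the argument are literally the same objects (including the bookkeeping for coincident initial positions $x_{i,N}^0$). Once that identification is in place, the limits are immediate.
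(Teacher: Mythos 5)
Your proposal is correct and follows essentially the same route as the paper's proof: construct the atomic data via \eqref{eq:miN}, \eqref{eq:xiN0}, \eqref{eq:ANdef}, check that step (i) of Theorem~\ref{def:solution} applied to $(\rho^0_N,P^0_N)$ reproduces exactly the discretized pair $(M_N^0,A_N)$ from Section~\ref{ss:existence}, invoke the Proposition to identify $(\rho_N,P_N)$ with the sticky particle dynamics, and then read off \eqref{eq:W1conv}--\eqref{eq:Pconv} from Theorem~\ref{t:M}(b) via $\rho_N=\p_xM_N$, $P_N=-\p_tM_N$. The paper states $v_{i,N}^0$ directly by the formula \eqref{eq:v0} (average of $a$ over $(\th_{i-1,N},\th_{i,N}]$ minus the $\Phi$-sum), whereas you recover the same quantity via \eqref{eq:psiAn} and \eqref{e:SPCSapprox0}; these agree because $A_N'$ on $(\th_{i-1,N},\th_{i,N})$ equals $\tfrac{1}{m_{i,N}}\int_{\th_{i-1,N}}^{\th_{i,N}}a(m)\,\dd m$ under \eqref{eq:ANdef}, so the two constructions of the initial velocities are identical.
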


\begin{proof}
Let $A$, $a$, $M$, and $(\rho, u)$ be defined from $(\rho^0, u^0)$ through Theorem \ref{def:solution}. We take $(\rho^0_N, P^0_N)$ as in \eqref{eq:rhoPN0}, with $(m_{i,N}, x_{i,N}^0)_{i=1}^N$ chosen according to \eqref{eq:miN} and \eqref{eq:xiN0}, and $(v_{i,N}^0)_{i=1}^N$ defined by
\begin{equation}\label{eq:v0}
v_{i,N}^0=\frac{1}{m_{i,N}}\int_{\th_{i-1,N}}^{\th_{i,N}}a(m)\,\dd
m-\sum_{j=1}^N m_{j,N}\Phi(x_{i,N}^0-x_{j,N}^0).
\end{equation}
By the proof of the above proposition, the solution $(\rho_N,P_N)$
generated by the entropic selection principle is given by
\eqref{eq:rhoPNt}. Moreover, defining discretized initial data $M_N^0$
and flux $A_N$ from $(\rho^0_N, P^0_N)$, we have that $M_N$ defined by
\eqref{e:MNQN} is the associated entropy solution of \eqref{e:MN}.
Now, we can apply Theorem \ref{t:M}(b). In particular,
\eqref{eq:W1conv} is equivalent to \eqref{eq:Mlimit}. Since 
$\p_t M_N = -P_N$ and $\p_t M = -P$ in the sense of measures, we get
\eqref{eq:Pconv} directly from \eqref{eq:Mtlimit}.
\end{proof}

Next, we provide a refined estimate of \eqref{eq:W1conv}, with an
explicit convergence rate on the sticky particle approximation to our
solution.

For $\rho^0\in \cP_c(\R)$, let us denote by $[x_\ell^0, x_r^0]$ the smallest interval such that $\supp\rho^0\subseteq [x_\ell^0, x_r^0]$. The diameter $D^0$ is defined as
\[D^0 = \diam\supp\rho^0:=x_r^0-x_\ell^0.\]

We start by constructing a well-prepared atomic approximation of
$\rho^0$, described in the next proposition.

\begin{PROP}\label{prop:rho0disc}
  Let $\rho^0\in\cP_c(\R)$. For any fixed $N\in \N$, there exists $(m_{i,N}, x^0_{i,N})_{i=1}^N$ such that 
\begin{equation}\label{eq:Minvdiff}
    \|M^0-M_N^0\|_{L^1(\R)}\leq \tfrac{D^0}{N},\qquad
    \|(M^0)^{-1} - (M_N^0)^{-1}\|_{L^\infty(-\frac12, \frac12]}\leq \tfrac{D^0}{N},
\end{equation}
where $M_N^0$ is defined in \eqref{eq:MNzero}.
\end{PROP}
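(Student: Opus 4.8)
The plan is a ``bucketing'' discretization of $\rho^0$. Overlay on the support interval the equally spaced grid $y_i := x_\ell^0 + i\,D^0/N$, $i=0,\dots,N$ (so $y_0=x_\ell^0$, $y_N=x_r^0$, and consecutive nodes are at distance $D^0/N$); set $m_{1,N}:=\rho^0((-\infty,y_1])$, $m_{i,N}:=\rho^0((y_{i-1},y_i])$ for $2\le i\le N-1$, $m_{N,N}:=\rho^0((y_{N-1},+\infty))$, and place the $i$-th atom at $x_{i,N}^0:=y_i$. Then $M_N^0$ from \eqref{eq:MNzero} is the discrete CDF obtained by collapsing the mass of each cell onto its right endpoint. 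The one genuinely non-routine point is that this grid must be taken in the \emph{physical} variable $x$, with mesh $D^0/N$: the quantile discretization used earlier (cf.\ \eqref{eq:xiN0}, with $m_{i,N}=1/N$ or any masses) does \emph{not} give the rate here, because $(M^0)^{-1}$ can jump by as much as $D^0$ (whenever $\supp\rho^0$ has a gap), so only a grid in $x$ controls its oscillation. Everything after this choice is bookkeeping.

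Writing $\theta_{i,N}:=-\tfrac12+\sum_{j\le i}m_{j,N}$, one has $\theta_{0,N}=-\tfrac12$, $\theta_{N,N}=\tfrac12$, and $\theta_{i,N}=M^0(y_i)$ for $1\le i\le N-1$; moreover, since $M_N^0$ takes values in the finite set $\{\theta_{0,N},\dots,\theta_{N,N}\}$, a short computation gives $(M_N^0)^{-1}(m)=x_{i,N}^0=y_i$ for every $m\in(\theta_{i-1,N},\theta_{i,N}]$. I would then fix such an $m$ and check that $(M^0)^{-1}(m)\in[y_{i-1},y_i]$ as well: the bound $(M^0)^{-1}(m)\le y_i$ follows from $m\le\theta_{i,N}\le M^0(y_i)$ (at $i=N$ using $M^0(x_r^0)=\tfrac12$), and $(M^0)^{-1}(m)\ge y_{i-1}$ follows from $M^0(x)<m$ for all $x\le y_{i-1}$ --- which for interior $i$ is $M^0(y_{i-1})=\theta_{i-1,N}<m$ together with monotonicity of $M^0$, and at $i=1$ is $M^0\equiv-\tfrac12$ on $(-\infty,x_\ell^0)$. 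Hence $|(M^0)^{-1}(m)-(M_N^0)^{-1}(m)|\le y_i-y_{i-1}=D^0/N$, and taking the supremum over $m\in(-\tfrac12,\tfrac12]$ yields the second estimate in \eqref{eq:Minvdiff}.

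The first estimate then comes essentially for free. I would invoke the identity $\|M^0-M_N^0\|_{L^1(\R)}=\|(M^0)^{-1}-(M_N^0)^{-1}\|_{L^1((-\tfrac12,\tfrac12])}$ --- both sides equal $\W_1(\rho^0,\rho_N^0)$, as recalled above --- so that integrating the pointwise bound just obtained over the length-$1$ interval $(-\tfrac12,\tfrac12]$ gives $\|M^0-M_N^0\|_{L^1(\R)}\le D^0/N$. Two harmless caveats finish the argument: if $D^0=0$, i.e.\ $\rho^0$ is a single Dirac, everything is trivial (take all atoms at that point); and if one wants every $m_{i,N}$ strictly positive --- for instance in order to feed the output into Theorem~\ref{t:discr} --- any atom of positive mass may be split into several coincident copies without changing $M_N^0$ or any of the estimates, so one can pad up to exactly $N$ strictly positive masses. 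I anticipate no essential difficulty: the only places needing a little care are the verification of the formula for $(M_N^0)^{-1}$ and the treatment of the two end cells, both handled above.
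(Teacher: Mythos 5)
Your proof is correct, and it takes a genuinely different route from the paper. You place a uniform grid $y_i = x_\ell^0 + iD^0/N$ in physical space and bucket $\rho^0$ into cells of width $D^0/N$; the paper instead first strips out all internal vacuum intervals of length exceeding $D^0/N$, maps the remainder onto $[0,L]$, and takes equally distributed nodes there together with the left endpoints of the removed gaps. The point of the paper's more elaborate grid is to produce strictly positive masses $m_{i,N}$ directly --- a property used downstream, since Theorem~\ref{t:approxrate} feeds the output of this proposition into Theorem~\ref{t:discr} via \eqref{eq:rhoPN0}, and hypothesis \eqref{eq:MN0} requires each $m_j>0$. Your grid can yield zero interior masses whenever $\rho^0$ has a short vacuum gap, and your padding remedy (dropping empty cells and splitting a positive atom into coincident copies to restore exactly $N$ strictly positive masses) is valid: it leaves $M_N^0$, and hence $(M_N^0)^{-1}$, unchanged, so both bounds in \eqref{eq:Minvdiff} survive intact. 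Your $L^1$ estimate also goes by a different route: you invoke the identity $\|M^0-M_N^0\|_{L^1(\R)}=\|(M^0)^{-1}-(M_N^0)^{-1}\|_{L^1((-\frac12,\frac12])}$ (both equal $\W_1(\rho^0,\rho_N^0)$) and integrate the pointwise $L^\infty$ bound over the unit-length interval, whereas the paper estimates the $L^1$ distance directly by $\sum_i m_{i,N}(x_{i,N}^0-x_{i,N}^*)$, where $x_{i,N}^*$ is an ``effective left endpoint'' skipping over the removed vacuum. Both yield the same constant; yours is leaner in that the $L^1$ bound falls out of the $L^\infty$ bound for free. Your preliminary remark that the quantile discretization \eqref{eq:xiN0} cannot give the rate --- because $(M^0)^{-1}$ can jump by $O(D^0)$ across a vacuum gap --- is a useful clarification of why a grid in the physical variable is essential here.
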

\begin{proof}
  We first locate all the large internal vacuum intervals of $\rho^0$, namely
  $I_k=(a_k, b_k)\subset[x_\ell^0, x_r^0]$
  such that $b_k-a_k>\frac{D^0}{N}$ and $\rho^0(I_k)=0$.
  There are clearly fewer than $N$ such intervals.
  Let $S_N^{(1)}=\{a_k\}_{k=1}^K$ be the collection of all left endpoints of the $I_k$'s.  We take $N-K$ additional points, equally distributed, to partition $[x_\ell^0, x_r^0]$ into a total of $N$ intervals.  More precisely, we map $[x_\ell^0, x_r^0]\backslash\cup_{k=1}^K I_k$ to a single interval $[0, L]$, with $L<\frac{(N-K)D^0}{N}$, and we take equally distributed nodes $(\frac{i L}{N-K})_{i=1}^{N-L}$. Note that the distance between adjacent nodes is $\frac{L}{N-K}<\frac{D^0}{N}$. If the location of some node coincides with the image of a point in $S_N^{(1)}$, we can perturb the node slightly, in such a way that the distance in $[0,L]$ between any two adjacent nodes is still less than $\frac{D^0}{N}$. Now, we can take the inverse map of the selected nodes to $[x_\ell^0, x_r^0]\backslash\cup_{k=1}^K I_k$ and form a set $S_N^{(2)}$. We then order the set   $S_N=S_N^{(1)}\cup S_N^{(2)}$ to obtain an $N$-tuple  $(x^0_{i,N})_{i=1}^N$.
  Our construction is illustrated in Figure \ref{fig:x0}.
  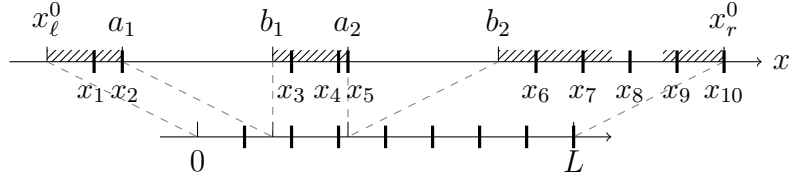
\begin{figure}[ht]
  \begin{tikzpicture}
    \draw[->] (-.5,0) -- (9.5,0) node[right] {$x$};
    \draw (0,0) -- (0,.2) node[above] {$x_\ell^0$};
    \draw (1,0) -- (1,.2) node[above] {$a_1$};
    \draw (3,0) -- (3,.2) node[above] {$b_1$};
    \draw (4,0) -- (4,.2) node[above] {$a_2$};
    \draw (6,0) -- (6,.2) node[above] {$b_2$};
    \draw (9,0) -- (9,.2) node[above] {$x_r^0$};
    \fill[pattern=north east lines] (0,0) rectangle (1,.15);
    \fill[pattern=north east lines] (3,0) rectangle (4,.15);
    \fill[pattern=north east lines] (6,0) rectangle (7.5,.15);
    \fill[pattern=north east lines] (8.2,0) rectangle (9,.15);

    \draw[->] (1.5,-1) -- (7.5,-1);
    \draw (2,-.8) -- (2,-1)  node[below,yshift=-1] {$0$};
    \draw (3, -.8) -- (3,-1);
    \draw (4, -.8) -- (4,-1);
    \draw (7, -.8) -- (7,-1) node[below,yshift=-1] {$L$};

    \draw[dashed,color=gray] (0,0) -- (2,-1);
    \draw[dashed,color=gray] (1,0) -- (3,-1) -- (3,0);
    \draw[dashed,color=gray] (4,0) -- (4,-1) -- (6,0);
    \draw[dashed,color=gray] (9,0) -- (7,-1);

    \foreach \i in {1,...,8}
       \draw[very thick] (2+\i*.625,-1.15) -- (2+\i*.625, -.85);

    \draw[very thick] (.625, .15) -- (.625, -.15)  node[below, xshift=-1] {{\small $x_1$}};
    \draw[very thick] (1,.15) -- (1, -.15)  node[below, xshift=1] {{\small $x_2$}};
    \draw[very thick] (2+2*.625,.15) -- (2+2*.625, -.15)  node[below] {{\small $x_3$}};
    \draw[very thick] (2+3*.625,.15) -- (2+3*.625, -.15)  node[below, xshift=-4] {{\small $x_4$}};
    \draw[very thick] (4,.15) -- (4, -.15)  node[below, xshift=5] {{\small $x_5$}};
      \foreach \i in {6,...,10}
       \draw[very thick] (2.75+\i*.625, .15) -- (2.75+\i*.625, -.15)
       node[below] {{\small $x_{\i}$}};
   \end{tikzpicture}
   \caption{ A sample construction of the $N$-tuple $(x_{i,N}^0)_{i=1}^N$. The shaded areas represent the support of $\rho^0$.  In this example, $N=10$, and there are two large vacuum intervals $(a_1,b_1)$ and $(a_2, b_2)$.  Thus $S_N^{(1)}=\{a_1, a_2\}$, and the set $S_N^{(2)}$ is constructed by taking 8 equally distributed points in $[0,L]$ and mapping back to $[x_\ell^0, x_r^0]$.}\label{fig:x0}  
 \end{figure}
 
We define an important quantity $x_{i,N}^*$ as follows. If $i-1\in S_1^{(N)}$, i.e., $x_{i-1,N}^0=a_k$, we set $x_{i,N}^*=b_k$. If $i-1\in S_2^{(N)}$, we set  $x_{i,N}^*=x_{i-1,N}^0$. For $i=1$, we set $x_{1,N}^*=x_\ell^0$. Our construction clearly guarantees
\begin{equation}\label{eq:xstar}
  0<x_{i,N}^0-x_{i,N}^*\le\tfrac{D^0}{N},\qquad i=1,\ldots,N.
\end{equation}

Next, we define $m_{i,N} = \rho^0((x_{i-1, N}^0, x_{i,N}^0])$ for each $i=1,\ldots,N$. (We take $x_{0,N}^0=-\infty$ for convenience.)
By construction, each $m_{i,N}$ is strictly positive.  As usual, define $\th_{i,N} = -\frac12 + \sum_{j=1}^i m_{j,N}$, $i=0, 1, \ldots, N$. It is easy to check that the $x_{i,N}^0$'s as defined above satisfy
\[
x_{i,N}^0 = \inf\{x:M^0(x)\ge \th_{i,N}\}, 
\qquad i=1, \ldots, N.
\]
Moreover, we have
\[M_N^0(x) = -\frac12 + \sum_{i=1}^N m_{i,N}H(x-x_{i,N}^0)\leq M^0(x),\]
with equality attained at $x=x_{i,N}^0$, as well as when $x\in
I_k$. This implies
\[\int_\R| M^0(x)-M^0_N(x)|\dx\leq \sum_{i=1}^N m_{i,N}
  (x_{i,N}^0-x_{i,N}^*)\leq \tfrac{D^0}{N}.\]
where we have used \eqref{eq:xstar} and $\sum_{i=1}^Nm_{i,N}=1$. 

For the second inequality in \eqref{eq:Minvdiff},
take any $i=1,\ldots,N$
and $m\in(\th_{i-1,N},\th_{i,N}]$.
By definition, $(M_N^0)^{-1}(m)=x_{i,N}^0$.
We claim that $(M^0)^{-1}(m)\in[x_{i,N}^*, x_{i,N}^0]$.
Indeed, in the case $x_{i-1,N}^0\in S_N^{(2)}$, we have
$M^0(x_{i,N}^*)=\th_{i-1,N}<m$ and therefore $(M^0)^{-1}(m)\geq x_{i,N}^*$; on the other hand, if  $x_{i-1,N}^0\in S_N^{(2)}$ or if $i=1$, then since $\rho\big(
(x_{i-1,N}^0,x_{i,N}^*)\big)=0$, we have
$M^0(x_{i,N}^*-)=\th_{i-1,N}<m$, which also implies
$(M^0)^{-1}(m)\geq x_{i,N}^*$.
Finally, we apply \eqref{eq:xstar} and conclude by writing
\[0\leq (M_N^0)^{-1}(m)-(M^0)^{-1}(m)\leq x_{i,N}^0-x_{i,N}^*\leq \tfrac{D^0}{N}.\]
\end{proof}

Under an additional regularity assumption on $\phi$ and on $u^0$, we can now apply the stability estimate \eqref{e:stability} in Theorem \ref{t:M}(c) to obtain an explicit error estimate for a sticky particle approximation to our solution, with the approximate initial density chosen according to the previous proposition.
\begin{THEOREM}
\label{t:approxrate}
Let $\b,s\in(0,1]$. Assume there exists a neighborhood of $0$ inside which $\phi(x)\le c_s|x|^{s-1}$ for some $c_s>0$.  Suppose $\rho^0\in\cP_c(\R)$ and $u^0\in C^\b([x_\ell^0, x_r^0])$.  There exists a sequence of (explicitly constructed) atomic initial data $(\rho^0_N, P^0_N)_{N=1}^\infty$ such that the corresponding solution of \eqref{eq:EA} satisfies
\begin{equation}\label{eq:W1decay}
\cW_1(\rho(t), \rho_N(t)) \le C(1+t)N^{-\g},
\qquad \g = \min\{s,\b\},
\end{equation}
for any $t>0$, where the constant $C$ depends on $D^0$, $\phi$, $c_s$, $s$, $\b$, and $|u^0|_{C^\b}$.
\end{THEOREM}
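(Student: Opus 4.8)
The plan is to combine the stability estimate of Theorem~\ref{t:M}(c) with the well-prepared discretization of Proposition~\ref{prop:rho0disc}, tracking carefully how the regularity of $\phi$ near $0$ and the H\"older regularity of $u^0$ control the error in the flux. We fix $N$ and take $(m_{i,N},x_{i,N}^0)_{i=1}^N$ from Proposition~\ref{prop:rho0disc}, with velocities $(v_{i,N}^0)$ defined by \eqref{eq:v0} so that, by the Proposition in Section~\ref{ss:approx}, the sticky particle Cucker--Smale dynamics generate the solution $(\rho_N,P_N)$ of \eqref{eq:EA}. In the language of Theorem~\ref{t:M}, this solution corresponds to the entropy solution $M_N$ of \eqref{e:balancelaw} with initial data $M_N^0$ and flux $A_N$, where $A_N$ is the piecewise-linear map with $A_N(\th_{i,N}) = A(\th_{i,N})$ for all $i$ (equivalently, $A_N' = a_N$, the piecewise-constant average of $a = \psi^0\circ(M^0)^{-1}$ over each $(\th_{i-1,N},\th_{i,N})$). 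Since $\W_1(\rho(t),\rho_N(t)) = \|M(\cdot,t)-M_N(\cdot,t)\|_{L^1(\R)}$, Theorem~\ref{t:M}(c) applied with $\tM = M_N$, $\wtA = A_N$ gives
\[
\W_1(\rho(t),\rho_N(t)) \le \|M^0 - M_N^0\|_{L^1(\R)} + t\,|A - A_N|_{\Lip}.
\]
The first term is $\le D^0/N$ by \eqref{eq:Minvdiff}, so the entire problem reduces to bounding $|A - A_N|_{\Lip} = \|a - a_N\|_{L^\infty}$ by $C N^{-\g}$.

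For the flux term, the key point is a modulus-of-continuity estimate on $a = \psi^0\circ (M^0)^{-1}$. Since $A_N'$ on each interval $(\th_{i-1,N},\th_{i,N})$ is the average of $a$ there, we have $\|a - a_N\|_{L^\infty} \le \sup_i \omega_a(|\th_{i,N}-\th_{i-1,N}|)$, where $\omega_a$ is the modulus of continuity of $a$ on $[-\tfrac12,\tfrac12]$; but more usefully, since $(M^0)^{-1}$ maps $(\th_{i-1,N},\th_{i,N}]$ into an interval of length $\le x_{i,N}^0 - x_{i,N}^* \le D^0/N$ by \eqref{eq:xstar} and the argument in Proposition~\ref{prop:rho0disc}, it suffices to control the oscillation of $\psi^0$ over spatial intervals of length $D^0/N$ inside $[x_\ell^0,x_r^0]$. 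Now $\psi^0 = u^0 + \Phi*\rho^0$. The term $u^0$ contributes an oscillation $\le |u^0|_{C^\b}(D^0/N)^\b$ directly from the H\"older hypothesis. For $\Phi*\rho^0$, note $\Phi(x) = \int_0^x\phi(y)\dy$; the hypothesis $\phi(x)\le c_s|x|^{s-1}$ near $0$ (together with local integrability of $\phi$ away from $0$) gives $|\Phi(x)-\Phi(x')| \le C(D^0,\phi,c_s,s)|x-x'|^s$ for $x,x'$ in the relevant compact set, i.e., $\Phi\in C^s$ there. Since $\rho^0$ is a probability measure, $\Phi*\rho^0$ inherits the same $C^s$ modulus: $|\Phi*\rho^0(x) - \Phi*\rho^0(x')| \le C|x-x'|^s$. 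Hence the oscillation of $\psi^0$ over an interval of length $D^0/N$ is $\lesssim (D^0/N)^\b + (D^0/N)^s \lesssim N^{-\g}$ with $\g = \min\{s,\b\}$, and the constant depends on the claimed quantities. Substituting back yields \eqref{eq:W1decay} with a constant of the form $C(1+t)$.

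The main obstacle is the spatial-oscillation estimate for $a = \psi^0\circ(M^0)^{-1}$, specifically the interplay between the possibly-wild behavior of the generalized inverse $(M^0)^{-1}$ (which is only left-continuous and can jump across vacuum intervals of $\rho^0$) and the regularity of $\psi^0$. This is exactly why Proposition~\ref{prop:rho0disc} is set up to place a node at the left endpoint $a_k$ of every large vacuum interval: across such a jump of $(M^0)^{-1}$, the H\"older modulus of $\psi^0$ cannot be applied to the jump size in $x$, so one must instead verify that $a$ is \emph{constant} on each $(\th_{i-1,N},\th_{i,N}]$ whose preimage straddles no node, i.e., that the partition $(\th_{i,N})$ is fine enough that $(M^0)^{-1}$ restricted to each such subinterval lands in a set of diameter $\le D^0/N$ (excising the vacuum gaps), which is precisely the content of \eqref{eq:xstar}. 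Once this bookkeeping is in place, the H\"older estimates for $u^0$ and $\Phi*\rho^0$ combine routinely. A secondary technical point is checking that $\phi\in L^1_{\loc}$ plus the singularity bound genuinely yields $\Phi\in C^s$ on the compact set of interest (one splits the integral defining $\Phi(x)-\Phi(x')$ into the part near $0$, handled by $c_s|y|^{s-1}$, and the part away from $0$, handled by boundedness of $\phi$ there and Lipschitz continuity of $\Phi$), but this is a short computation.
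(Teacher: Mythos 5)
Your proposal is correct and follows essentially the same strategy as the paper: apply the stability bound of Theorem~\ref{t:M}(c), use the discretization of Proposition~\ref{prop:rho0disc} to bound $\|M^0-M_N^0\|_{L^1}$, and control $\|a - a_N\|_{L^\infty}$ by the $C^\gamma$ modulus of $\psi^0 = u^0 + \Phi*\rho^0$ over intervals of length $D^0/N$. The only cosmetic difference is that you define $\psi_{i,N}^0$ by the averaged formula \eqref{eq:v0} (so that $A_N$ interpolates $A$ at the breakpoints) whereas the paper takes $\psi_{i,N}^0 = \psi^0(x_{i,N}^0)$ and exploits the exact cancellation $\psi_N^0\circ(M_N^0)^{-1} = \psi^0\circ(M_N^0)^{-1}$; both choices produce $\|a-a_N\|_{L^\infty}\lesssim |\psi^0|_{C^\gamma}(D^0/N)^\gamma$ via the same observation that $(M^0)^{-1}$ maps each $(\th_{i-1,N},\th_{i,N}]$ into an interval of length $\le D^0/N$ by \eqref{eq:xstar}.
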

\begin{proof}
  First, we construct the approximated initial data $(\rho_N^0, P_N^0)$ via \eqref{eq:rhoPN0}, with $(m_{i,N}, x^0_{i,N})_{i=1}^N$ chosen according to Proposition \ref{prop:rho0disc}, and $(v_{i,N}^0)_{i=1}^N$ defined as follows.
  Set $\psi^0 = u^0 + \Phi*\rho^0$ as in Theorem~\ref{def:solution}. Note that our assumption on $\phi$ guarantees that $\Phi$ is locally $C^s$; the same is therefore true of $\Phi*\rho^0$. Thus
  $\psi^0\in C^\g([x_\ell^0, x_r^0])$, with
  \begin{equation}\label{eq:psiHolder}
    |\psi^0|_{C^\g([x_\ell^0, x_r^0])}\leq |u^0|_{C^\g([x_\ell^0, x_r^0])} +
    |\Phi*\rho^0|_{C^\g([x_\ell^0, x_r^0])}.
  \end{equation}
  We can then simply take $\psi_{i,N}^0=\psi^0(x^0_{i,N})$ and define $v_{i,N}^0$ using
  \eqref{e:SPCSapprox0}.

  Next, we apply the stability estimate \eqref{e:stability} with
  $\tM=M_N$, to get
  \[
    \cW_1(\rho(t), \rho_N(t))=\|M(\cdot,t)-M_N(\cdot,t)\|_{L^1(\R)} \le
    \|M^0-M_N^0\|_{L^1(\R)} + t\|a - a_N\|_{L^\infty(-\frac12, \frac12]}.
  \]
  The first term can be estimated directly from
  \eqref{eq:Minvdiff}. We calculate the second term as follows:
  \begin{align*}
    \|a - a_N\|_{L^\infty(-\frac12, \frac12]}=&\, \|\psi^0 \circ (M^0)^{-1} - \psi_N^0 \circ (M_N^0)^{-1} \|_{L^\infty(-\frac12, \frac12]} \\
\le&\, |\psi^0|_{C^\g([x_\ell^0, x_r^0])}\,\|(M^0)^{-1} - (M_N^0)^{-1}\|_{L^\infty(-\frac12, \frac12]}^\g + \|(\psi^0 - \psi_N^0)\circ (M_N^0)^{-1} \|_{L^\infty(-\frac12, \frac12]}\\
\le&\,  |\psi^0|_{C^\g([x_\ell^0, x_r^0])} N^{-\g}.
  \end{align*}
  Here, since $(M_N^0)^{-1}$ maps $(-\tfrac12,\tfrac12]$ to
  $(x_{i,N}^0)_{i=1}^N$, we only need to make sense of the function
  $\psi_N^0$ on $(x_{i,N}^0)_{i=1}^N$, where we have
  $\psi_N^0(x_{i,N}^0)=\psi^0(x_{i,N}^0)$ by construction.
  This eliminates the last term in the penultimate line.
  The last inequality is then obtained using \eqref{eq:Minvdiff}, and \eqref{eq:W1decay} follows immediately.
\end{proof}

\section{Asymptotic behavior of the solution}
\label{s:MoreProps}

In this section, we discuss the asymptotic behaviors of our weak
solutions $(\rho,P)$ to the 1D Euler-alignment system \eqref{eq:EA}.
The expected \emph{flocking phenomenon} has two ingredients.

First, we denote by $D(t)$ the diameter of the support of
$\rho(\cdot,t)$,
\begin{equation}\label{eq:Dt}
D(t):=\diam\supp \rho(t) = x_r(t)-x_\ell(t),
\end{equation}
where $[x_\ell(t), x_r(t)]$ is the smallest interval that contains
$\supp\rho(t)$.
We say that the solution experiences \emph{flocking} if
$D(t)$ remains uniformly bounded for all time, i.e., there exists a
constant $\overline{D}>0$ such that
\begin{equation}\label{eq:flocking}
  D(t)\leq \overline{D},\quad \text{for all}~~t\geq0.
\end{equation}

Second, we say that the solution experiences \emph{velocity alignment}
if the variation of the velocity $u(\cdot,t)$ decays to zero as time
approaches infinity.
Since our weak solutions $(\rho,P)$ only define the velocity
$u(\cdot,t)$ uniquely $\rho(t)$-a.e., we shall make sense of the
maximum and minimum velocities as follows:
\begin{equation}\label{eq:ubars}
u_+(t) = \sup_{f\in\cF}\frac{\int_\R f(x)\dd P(x,t)}{\int_\R
  f(x)\dd\rho(x,t)},\qquad
u_-(t) = \inf_{f\in\cF}\frac{\int_\R f(x)\dd P(x,t)}{\int_\R
  f(x)\dd\rho(x,t)},
\end{equation}
where
\[\cF =
  \left\{ f\in C_c^\infty(\R) : \int_\R f(x)\dd\rho(x,t)>0\right\}.\]
One can check that the definition of $u_+$ in
\eqref{eq:ubars} is equivalent to the \emph{essential supremum} of $u$:
\[u_+(t)=\inf\left\{c: \rho(t)\big(\{x: u(x,t)>c\}\big)=0\right\}.\]

Now, we are ready to define $V(t)$, the variation of $u(\cdot,t)$, by
\begin{equation}
\label{eq:Vt}
V(t) = u_+(t) - u_-(t).
\end{equation}
Velocity alignment happens when $V(t)\to0$ as time approaches
infinity. In particular, if $V(t)$ decays to zero exponentially in
time, we say the solution has \emph{fast alignment} property.

For regular solutions, it has been shown in \cite{TT2014} that if
$\phi$ has a fat tail \eqref{eq:fattail}, then
\emph{strong solutions must flock}:
any smooth solution $(\rho, u)$ of the Euler-alignment system
\eqref{e:EA} experiences flocking and fast alignment.
We will show that the same flocking phenomenon occurs for our weak
solutions.
Our strategy is to obtain flocking estimates for solutions to the sticky particle
Cucker--Smale dynamics which are uniform in the number of
particles $N$. Then we will use the convergence results for the approximations from Section \ref{ss:approx} to pass the properties to $(\rho,P)$.

\subsection{Uniform flocking estimates on the sticky particle approximations}
The flocking phenomenon for the Cucker--Smale dynamics \eqref{e:CS} has
been first studied for general protocols $\phi$ in \cite{HaLiu2008}. The idea can be easily adapted
to the sticky particle dynamics.

Consider a sequence of sticky particle Cucker--Smale dynamics
$(m_{i,N},x_{i,N}(t), v_{i,N}(t))$ that approximates the
Euler-alignment system $(\rho, P)$.
The discrete analog of the diameter is
\begin{equation}
\label{e:DNdef}
D_N(t) := \max_{1\le i,j \le N} |x_{i,N}(t) - x_{j,N}(t)| = x_{N,N}(t)-x_{1,N}(t),
\end{equation}
and the variation of velocity becomes
\begin{equation}
\label{e:VNdef}
V_N(t) := \max_{1\le i,j \le N} |v_{i,N}(t) - v_{j,N}(t)| = \max_{1\le i \le N}v_{i,N}(t)-\min_{1\le i \le N}v_{i,N}(t).
\end{equation}
Note that using the approximations of the initial data
$(m_{i,N},x_{i,N}^0, v_{i,N}^0)$ constructed in Theorem \ref{t:SPconv}, it is easy
to verify that
\begin{equation}\label{eq:DV0}
  D_N(0)\leq D^0,\quad V_N(0)\leq V^0.
\end{equation}

We are ready to establish uniform flocking estimates for the sticky particle Cucker--Smale dynamics. 
\begin{THEOREM}
  \label{t:SPflocking}
  Let $(x_{i,N}(t), v_{i,N}(t))_{i=1}^N$ be a sequence of
  sticky particle Cucker--Smale dynamics associated to the initial
  data $(m_{i,N}, x_{i,N}^0, v_{i,N}^0)_{i=1}^N$.  Define $D_N(t)$ and
  $V_N(t)$ as in \eqref{e:DNdef} and \eqref{e:VNdef}. Assume
  \eqref{eq:DV0} holds, and that
  \begin{equation}
    \label{e:flockingthreshold}
    \sup_{R>0} \Phi(R) > \cE^0:=\Phi(D^0)+V^0.
  \end{equation}
  Then for all $t\ge 0$, the sticky particle dynamics satisfy the following estimates, uniformly in $N$.
  \begin{align}
    \label{e:SPflocking}
    \text{Flocking}: & \qquad \sup_{t\ge 0} D_N(t) \le \overline{D}:=\Phi^{-1}(\cE^0)<+\infty; \\
    \label{e:SPalignment} 
    \text{Fast alignment}: & \qquad V_N(t) \le V^0\exp(-\phi(\overline{D})t);
  \end{align}
\end{THEOREM}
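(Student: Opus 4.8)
The plan is to carry over the Lyapunov-functional argument of Ha and Liu \cite{HaLiu2008}, developed for the collisionless Cucker--Smale system, to the sticky particle setting. We introduce the energy
\[
  \cE_N(t) := \Phi(D_N(t)) + V_N(t)
\]
and the momentum $\bar{v} := \sum_{i=1}^N m_{i,N} v_{i,N}(t)$, which is conserved: on collisionless intervals this follows from the evenness of $\phi$, and across collisions from the momentum-conservation rule \eqref{e:momentumconscollision}. Both conclusions of the theorem will follow once we show that $\cE_N$ is non-increasing on $[0,\infty)$. Observe first that $D_N(t) = x_{N,N}(t) - x_{1,N}(t)$ is continuous (the trajectories are), and that $V_N$ can only jump \emph{downward} at a collision: by \eqref{e:momentumconscollision} a newly formed cluster takes a velocity that is a convex combination of the pre-collision velocities of its members, so $\max_i v_{i,N}$ does not increase and $\min_i v_{i,N}$ does not decrease. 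Hence $\cE_N$ has no upward jumps, and it remains to control $\cE_N$ on a collisionless interval.

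The main step is the velocity-contraction inequality
\[
  \frac{\dd}{\dd t} V_N(t) \le -\phi\big(D_N(t)\big)\,V_N(t),
\]
valid at a.e.\ collisionless time (or in the sense of Dini derivatives, which avoids discussing when the extremizing index switches). To prove it, write $v_+ = \max_i v_{i,N}$, $v_- = \min_i v_{i,N}$; at such times $\tfrac{\dd}{\dd t} v_\pm = \dot v_{k,N}$ for an index $k$ achieving the extremum, and inserting \eqref{eq:CS} while using that $\phi$ is radially decreasing (so $\phi(x_{j,N} - x_{k,N}) \ge \phi(D_N)$ for every $j$) together with the sign of $v_{j,N} - v_+$ (resp.\ $v_{j,N} - v_-$), we get
\[
  \frac{\dd}{\dd t} v_+ \le \phi(D_N)(\bar{v} - v_+), \qquad
  \frac{\dd}{\dd t} v_- \ge \phi(D_N)(\bar{v} - v_-);
\]
subtracting gives the contraction inequality. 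On the other side, $\tfrac{\dd}{\dd t} D_N = v_{N,N} - v_{1,N} \le V_N$, and since $\phi \ge 0$ and $\Phi$ is nondecreasing, a short computation (again using that $\phi$ is radially decreasing) bounds the upper Dini derivative of $t \mapsto \Phi(D_N(t))$ by $\phi(D_N)V_N$; this is the one place where the weak singularity of $\phi$ requires a little care, since then $\Phi$ need not be $C^1$. Adding the two estimates yields $\tfrac{\dd}{\dd t}\cE_N \le 0$ a.e.\ on collisionless intervals, so together with the downward jumps at the finitely many collisions, $\cE_N$ is non-increasing.

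Consequently, for every $t \ge 0$,
\[
  \Phi(D_N(t)) \le \cE_N(t) \le \cE_N(0) \le \Phi(D^0) + V^0 = \cE^0,
\]
where the last inequality uses \eqref{eq:DV0}. The threshold hypothesis \eqref{e:flockingthreshold} states exactly that $\cE^0 < \sup_{R>0}\Phi(R)$, so $\overline{D} := \Phi^{-1}(\cE^0)$ is finite and $D_N(t) \le \overline{D}$ for all $t$, which is \eqref{e:SPflocking}. Feeding $D_N(t) \le \overline{D}$ back into the contraction inequality and using monotonicity of $\phi$ gives $\tfrac{\dd}{\dd t} V_N \le -\phi(\overline{D})\,V_N$, whence $V_N(t) \le V_N(0)\exp(-\phi(\overline{D})t) \le V^0 \exp(-\phi(\overline{D})t)$ by Gr\"onwall's inequality and \eqref{eq:DV0}; this is \eqref{e:SPalignment}. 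Since $\overline{D}$ and $\phi(\overline{D})$ depend only on $D^0$, $V^0$, and $\phi$ — not on $N$ nor on the chosen masses and positions — these bounds are uniform in $N$. The only genuine technical point, rather than a deep obstacle, is the non-smoothness of $D_N$ and $V_N$ (switching extremizing indices, finitely many collisions, and the non-$C^1$ character of $\Phi$ for weakly singular $\phi$), all handled via Dini derivatives and the observation that collisions only decrease $\cE_N$.
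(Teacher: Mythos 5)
Your proof is correct and follows essentially the same approach as the paper's: the Lyapunov functional $\cE_N(t) = \Phi(D_N(t)) + V_N(t)$, the differential inequalities $\dot{D}_N \le V_N$ and $\dot{V}_N \le -\phi(D_N)V_N$ on collisionless intervals, monotonicity of $\cE_N$ across collisions because $V_N$ can only jump downward while $\Phi(D_N)$ is continuous, and then the threshold hypothesis to close the argument. The paper simply cites the two differential inequalities as ``well-known'' rather than deriving them; you spell them out (including the helpful use of the conserved mean velocity $\bar{v}$) and are slightly more explicit about the Dini-derivative care needed when $\phi$ is weakly singular so that $\Phi$ need not be $C^1$ — otherwise the two arguments are the same.
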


\begin{proof}	
  It is well-known that on time intervals where $(x_i(t), v_i(t))_{i=1}^N$ follow the (collisionless) Cucker--Smale dynamics, the quantities $D_N(t)$ and $V_N(t)$ are Lipschitz continuous and satisfy the following differential inequalities at every time $t$ where they are differentiable:
  \begin{align}
    \label{e:SDDID}
    \dot{D}_N(t) & \le V_N(t),\\
    \label{e:SDDIV}
    \dot{V}_N(t) & \le -\phi(D_N(t))V_N(t).
  \end{align}
Define a Lyapunov functional $\cE_N(t)=\Phi(D_N(t))+V_N(t)$.
It clearly follows from \eqref{e:SDDID} and \eqref{e:SDDIV} that $\cE_N(t)$ is nonincreasing along intervals during which no collisions occur.  On the other hand, if $t$ is a collision time, then $V_N(t)\le V_N(t-)$ by the maximum principle, while $\Phi(D_N(t))$ is continuous at time $t$.  Therefore $\cE_N(t)$ is nonincreasing on all of $[0,\infty)$.  
	
Let us now assume that \eqref{e:flockingthreshold} holds.  Since
$\cE_N(t)$ is nonincreasing, we have
\[\Phi(D_N(t))\le \cE_N(t)\le \cE_N(0) \le \cE^0,\quad \text{for
    all}\,\, t\ge 0.\]
This implies \eqref{e:SPflocking}.
Note that $\Phi$ is a nondecreasing function in $[0,\infty)$, and the assumption
  \eqref{e:flockingthreshold} guarantees that $\cE^0$ lies in its
  range. Therefore, $\overline{D}=\Phi^{-1}(\cE^0)=\inf\{R:
  \Phi(R)\geq \cE^0\}$ is well-defined and takes a
  nonnegative finite value.

Finally, combining \eqref{e:SPflocking} and
\eqref{e:SDDIV} and using the fact that $\phi$ is radially decreasing,
we obtain \eqref{e:SPalignment}.
\end{proof}

\subsection{Flocking for the 1D Euler-alignment system}

\begin{THEOREM}\label{t:EAflocking}
Let $(\rho^0, P^0)$ satisfy the hypotheses of Theorem
\ref{def:solution}, and $(\rho, P)$ be the associated solution.
Assume \eqref{e:flockingthreshold} holds.
Then the solution $(\rho, P)$ experiences
 \begin{align}
    \label{e:EAflocking}
    \text{Flocking}: & \qquad \sup_{t\ge 0} D(t) \le \overline{D}:=\Phi^{-1}(\cE^0)<+\infty; \\
    \label{e:EAalignment} 
    \text{Fast alignment}: & \qquad V(t) \le V^0\exp(-\phi(\overline{D})t); 
  \end{align}
\end{THEOREM}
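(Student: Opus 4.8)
The plan is to deduce the flocking phenomenon for the weak solution $(\rho,P)$ from the uniform-in-$N$ estimates of Theorem~\ref{t:SPflocking}, transferred through the approximation result Theorem~\ref{t:SPconv}. I would begin by taking the explicitly constructed atomic initial data $(\rho^0_N,P^0_N)$ of Theorem~\ref{t:SPconv}, with associated sticky particle solutions $(\rho_N,P_N)$, where $\rho_N(t)=\sum_{i=1}^N m_{i,N}\delta_{x_{i,N}(t)}$ and $P_N(t)=\sum_{i=1}^N m_{i,N}v_{i,N}(t)\delta_{x_{i,N}(t)}$. These approximating data satisfy \eqref{eq:DV0}, so the hypothesis \eqref{e:flockingthreshold} of Theorem~\ref{t:SPflocking} is exactly the standing assumption of the present theorem. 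Hence, uniformly in $N$, one has $D_N(t)\le\overline{D}=\Phi^{-1}(\cE^0)$ and $V_N(t)\le V^0\exp(-\phi(\overline{D})t)$ for all $t\ge 0$, with $D_N$ and $V_N$ as in \eqref{e:DNdef}--\eqref{e:VNdef}.

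For the flocking bound \eqref{e:EAflocking}, fix $t>0$. By \eqref{e:xbound} (and the uniform boundedness of the initial velocities $(v_{i,N}^0)$) the positions $\{x_{i,N}(t)\}_i$ lie in a compact interval independent of $N$, so after passing to a subsequence I may assume $x_{1,N}(t)\to\underline{x}$ and $x_{N,N}(t)\to\overline{x}$ with $\overline{x}-\underline{x}=\lim D_N(t)\le\overline{D}$. Since $\rho_N(t)\stackrel{*}{\rightharpoonup}\rho(t)$ by \eqref{eq:W1conv} and $\supp\rho_N(t)\subseteq[x_{1,N}(t),x_{N,N}(t)]$, any $f\in C_c^\infty(\R)$ whose support is disjoint from $[\underline{x},\overline{x}]$ satisfies $\int f\,\dd\rho_N(t)=0$ for all large $N$, hence $\int f\,\dd\rho(t)=0$; thus $\supp\rho(t)\subseteq[\underline{x},\overline{x}]$ and $D(t)\le\overline{D}$. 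The finiteness of $\overline{D}$ follows just as in the proof of Theorem~\ref{t:SPflocking}, and the case $t=0$ is immediate since $\rho(0)=\rho^0$ and $\Phi(D^0)\le\cE^0$ forces $D^0\le\overline{D}$.

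For the alignment bound \eqref{e:EAalignment}, fix $t>0$ and set $c_N=\min_{1\le i\le N}v_{i,N}(t)$ and $C_N=\max_{1\le i\le N}v_{i,N}(t)=c_N+V_N(t)$. From the explicit form of $\rho_N(t)$ and $P_N(t)$ one has the measure inequalities $c_N\rho_N(t)\le P_N(t)\le C_N\rho_N(t)$. The $c_N$ are bounded uniformly in $N$ by the maximum principle \eqref{e:MPv} and the uniform initial-velocity bound, so passing to a further subsequence I may assume $c_N\to c_\infty$ and $V_N(t)\to\ell$, where $\ell\le V^0\exp(-\phi(\overline{D})t)$. Since $\rho_N(t)\stackrel{*}{\rightharpoonup}\rho(t)$, $P_N(t)\stackrel{*}{\rightharpoonup}P(t)$ (Theorem~\ref{t:SPconv}) and $c_N\rho_N(t)\stackrel{*}{\rightharpoonup}c_\infty\rho(t)$, the nonnegative measures $P_N(t)-c_N\rho_N(t)$ and $C_N\rho_N(t)-P_N(t)$ converge weak-$*$ to $P(t)-c_\infty\rho(t)$ and $(c_\infty+\ell)\rho(t)-P(t)$, which are therefore nonnegative as well. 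Recalling $P(t)=u(\cdot,t)\rho(t)$, this says $c_\infty\le u(x,t)\le c_\infty+\ell$ for $\rho(t)$-a.e.\ $x$; by the characterization of $u_\pm(t)$ in \eqref{eq:ubars} as the essential supremum and infimum of $u(\cdot,t)$ with respect to $\rho(t)$, we conclude $u_-(t)\ge c_\infty$ and $u_+(t)\le c_\infty+\ell$, whence $V(t)=u_+(t)-u_-(t)\le\ell\le V^0\exp(-\phi(\overline{D})t)$. The case $t=0$ is trivial since $V(0)=V^0$.

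The main obstacle I anticipate is the alignment passage: one must express the discrete statement ``all $v_{i,N}(t)$ lie in an interval of length $V_N(t)$'' in a form that survives the weak-$*$ limit, given that $u(\cdot,t)$ is only defined $\rho(t)$-almost everywhere. Sandwiching $P_N(t)$ between $c_N\rho_N(t)$ and $C_N\rho_N(t)$ as measures, and using that weak-$*$ limits of nonnegative measures remain nonnegative, is the cleanest route; combined with the essential-sup/inf description of $u_\pm$ it closes the argument. The flocking part is comparatively routine once one observes that weak-$*$ convergence of measures with uniformly compact supports forces containment of supports in the limit.
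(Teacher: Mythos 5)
Your proposal is correct and follows the same overall strategy as the paper: establish the uniform-in-$N$ flocking and alignment estimates of Theorem~\ref{t:SPflocking}, then transfer them to the weak solution $(\rho,P)$ via the convergence results of Theorem~\ref{t:SPconv}. The passage to the limit, however, is carried out differently in both halves. For flocking, the paper argues by contradiction: if $D(t)>\overline D$, it invokes Lemma~\ref{lem:flockingconv} to get a uniform positive lower bound on $\W_1(\rho(t),\rho_N(t))$, contradicting \eqref{eq:W1conv}. You instead argue directly: pass to a subsequence where the endpoints $x_{1,N}(t)$ and $x_{N,N}(t)$ converge, and observe that weak-$*$ convergence of compactly supported measures forces $\supp\rho(t)$ into the limiting interval of length $\le\overline D$; this is a slightly more direct route that does not require the auxiliary Lemma~\ref{lem:flockingconv}. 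For alignment, the paper exploits the variational characterization \eqref{eq:ubars} directly, choosing normalized test-function sequences $f_k, g_k$ approximating $u_+(t)$ and $u_-(t)$, writing each $\int f_k\,\dd P(t)$ as an $N\to\infty$ limit of the discrete quotients, bounding the difference of quotients by $V_N(t)$, and finally letting $k\to\infty$. You instead sandwich $P_N(t)$ between the scalar multiples $c_N\rho_N(t)$ and $C_N\rho_N(t)$, pass the nonnegativity of $P_N-c_N\rho_N$ and $C_N\rho_N-P_N$ through the weak-$*$ limit, and read off $c_\infty\le u\le c_\infty+\ell$ $\rho(t)$-a.e.; identifying $u_\pm$ with the essential bounds of $u$ then closes the argument in a single limit. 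Both routes are valid; yours trades the double limit $(N\to\infty,\,k\to\infty)$ for an extraction of subsequences making $c_N$ and $V_N(t)$ converge (which is harmless since the final bound depends only on the uniform estimate, not on the subsequential limits themselves), and it avoids having to track the normalization $\int f_k\,\dd\rho_N(t)\to 1$ for each fixed $k$.

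One small remark: the separate treatment of $t=0$ in your flocking argument is superfluous, since the support-containment argument applies equally well at $t=0$; the observation that $\Phi(D^0)\le\cE^0$ is of course consistent but not needed as a special case.
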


\begin{proof}
  Let $(\rho_N, P_N)_{N=1}^\infty$ be a sequence of sticky particle
  approximations. We will establish \eqref{e:EAflocking} and \eqref{e:EAalignment} using the uniform flocking estimates on $(\rho_N, P_N)$ furnished by Theorem \ref{t:SPflocking}, as well as the convergence results for the sticky particle approximation in Theorem \ref{t:SPconv}.

  We first show \eqref{e:EAflocking} by contradiction. Assume there
  exists a time $t$ such that $D(t) = \overline{D}+\e$, with some
  $\e>0$. We apply Lemma \ref{lem:flockingconv} below with
  $\rho=\rho(t)$ and $\trho=\rho_N(t)$ and get
  $\W_1(\rho(t), \rho_N(t))\geq c>0$, where $c=c(\rho,\e)$ is
  independent of $N$. This uniform positive lower bound contradicts the convergence~\eqref{eq:W1conv}. 

  Next, we turn to \eqref{e:EAalignment}. Fix a time $t\geq0$.
  In view of \eqref{eq:ubars}, we can find a sequence of test
  functions $f_k\in C_c^\infty(\R)$, 
 normalized by $\int_\R f_k(x)\dd\rho(x,t)=1$, such
 that $u_+(t)=\lim_{k\to\infty}\int_\R f_k(x)\dd P(x,t)$.
 For each $k$, we apply the convergence results \eqref{eq:W1conv} and
 \eqref{eq:Pconv} to get
 \[
 \lim_{N\to\infty}\int_\R f_k(x)\dd\rho_N(x,t)=\int_\R f_k(x)\dd\rho(x,t) =1,\quad
  \lim_{N\to \infty} \int_\R f_k(x)\dd P_N(x,t)=\int_\R f_k\dd P(x,t).
  \]
 We therefore obtain
 \[
   \int_\R f_k(x) \dd P(x,t) = \lim_{N\to \infty}
   \frac{\int_\R f_k(x)\dd P_N(x,t)}{\int_\R f_k(x) \dd\rho_N(x,t)}.
\]
Similarly, we find a sequence of normalized test functions $g_k$ such that $u_-(t)=\lim_{k\to\infty}\int_\R g_k(x)\dd
P(x,t)$. We may thus write
 \begin{align*}
  &\int_\R f_k(x) \dd P(x,t) -\int_\R g_k(x) \dd P(x,t) = \lim_{N\to \infty}
   \left(\frac{\int_\R f_k(x)\dd P_N(x,t)}{\int_\R f_k(x) \dd\rho_N(x,t)}
  -\frac{\int_\R g_k(x)\dd P_N(x,t)}{\int_\R g_k(x) \dd\rho_N(x,t)}\right)\\
 &\leq \limsup_{N\to\infty} \left(\max_{1\le i\le N}v_{i,N}(t)-\min_{1\le i\le N}v_{i,N}(t)\right)
   =\limsup_{N\to\infty} V_N(t)\leq V^0\exp(-\phi(\overline{D})t),
 \end{align*}
where we have used the uniform fast alignment estimate
\eqref{e:SPalignment} in the last inequality.
Finally, we take $k\to\infty$ and use the definition \eqref{eq:Vt} to
conclude \eqref{e:EAalignment}.
\end{proof}

When the communication protocol $\phi$ has a fat tail \eqref{eq:fattail},
we get $\lim_{R\to\infty}\Phi(R)=\infty$. Therefore,
\eqref{e:flockingthreshold} holds for any finite $D^0$ and $V^0$.
Hence, Theorem \ref{t:EAflocking} implies that our \emph{weak solution
must flock}.

We now present the following lemma, which is used in the proof
of Theorem \ref{t:EAflocking}. 
\begin{LEMMA}\label{lem:flockingconv}
  Let $\rho, \trho\in\cP_c(\R)$. Denote $D$ and $\tD$ the diameter of
  the support of $\rho$ and $\trho$, respectively. Suppose $\tD<D$.
  Then, there exists a constant $c>0$, depending only on $\rho$ and $D-\tD$, such that $\W_1(\rho,\trho)\geq c$.
\end{LEMMA}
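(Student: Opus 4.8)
The plan is to exploit the dual (sup over $1$-Lipschitz functions) formulation of $\W_1$ by exhibiting a single admissible test function $f$ that separates $\rho$ from $\trho$ by a definite amount. Since $\tD < D$, the support of $\rho$ sticks out past the support of $\trho$ on at least one side; say (after relabeling) that the right endpoint $x_r$ of $\supp\rho$ satisfies $x_r \ge \tx_r + \tfrac12(D-\tD)$, where $\tx_r$ is the right endpoint of $\supp\trho$ (one of the two endpoints must stick out by at least $\tfrac12(D-\tD)$, since the total ``overhang'' $x_r - \tx_r$ plus $\tx_\ell - x_\ell$ is at least $D - \tD$; we treat the right-overhang case, the left being symmetric). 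Set $\delta = \tfrac12(D-\tD)$ and let $\beta = x_r - \delta \ge \tx_r$, so that $\trho((\beta,\infty)) = 0$ while, by definition of $x_r$ as the right endpoint of $\supp\rho$, we have $\mu_0 := \rho((\beta, x_r]) = \rho((\beta,\infty)) > 0$.

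Next I would take $f(x) = \max\{0, \min\{x - \beta, \delta\}\}$, which is $1$-Lipschitz, nonnegative, supported in $[\beta,\infty)$, and equal to $\delta$ on $[x_r,\infty)$ (hence everywhere on the part of $\supp\rho$ lying in $(x_r-\delta, x_r] = (\beta, x_r]$ it is at least... wait, more carefully: $f \ge 0$ everywhere and $f \equiv \delta$ on $[\beta+\delta,\infty) = [x_r,\infty)$). Then
\[
\int_\R f \dd\rho - \int_\R f\dd\trho = \int_\R f\dd\rho \ge \int_{[x_r,\infty)} f \dd\rho = \delta\,\rho(\{x_r\}) ,
\]
using $f \equiv 0$ on $\supp\trho \subseteq (-\infty,\beta]$ for the first equality and $f\ge 0$ for the inequality. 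If $\rho$ has an atom at $x_r$ this already gives the bound with $c = \delta\,\rho(\{x_r\})$; in general $\rho(\{x_r\})$ may be zero, so instead I would keep the integral over $(\beta, x_r]$: on this interval $f(x) = x - \beta \ge 0$, and more usefully I can lower-bound by integrating only over $[\beta + \delta/2, x_r]$, where $f \ge \delta/2$, giving
\[
\W_1(\rho,\trho) \;\ge\; \int_\R f\dd\rho \;\ge\; \tfrac{\delta}{2}\,\rho\big([\beta+\tfrac{\delta}{2},\, x_r]\big) \;=\; \tfrac{\delta}{2}\,\rho\big([x_r - \tfrac{\delta}{2},\, x_r]\big) \;=:\; c.
\]
This quantity $c = \tfrac{\delta}{2}\,\rho([x_r - \tfrac{\delta}{2}, x_r])$ is strictly positive because $x_r$ is the right endpoint of $\supp\rho$, so every half-open (or closed) interval $[x_r - \varepsilon, x_r]$ with $\varepsilon > 0$ has positive $\rho$-mass; and it depends only on $\rho$ and on $\delta = \tfrac12(D-\tD)$, as required. (In the symmetric left-overhang case one uses $f(x) = \max\{0,\min\{\beta' - x, \delta\}\}$ with $\beta'$ just to the left of $\tx_\ell$, and argues identically.)

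The only genuine subtlety — and the step I would present with the most care — is the case split on which side the support of $\rho$ overhangs that of $\trho$, together with the observation that the constant $c$ must be uniform over the sequence $\trho = \rho_N(t)$ appearing in the proof of Theorem \ref{t:EAflocking}; this is automatic here since $c$ depends on $\trho$ only through $\tD$, and in that application $D - \tD \ge D(t) - \overline D = \varepsilon$ is fixed. Everything else is a routine verification that $f$ is $1$-Lipschitz and that the two integrals behave as claimed. There is no analytic obstacle; the lemma is essentially the statement that $\W_1$ metrizes a topology at least as fine as convergence of supports from inside, made quantitative.
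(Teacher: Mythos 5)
Your proof is correct and rests on the same underlying idea as the paper's: build a $1$-Lipschitz test function whose integral against $\trho$ vanishes but whose integral against $\rho$ is bounded below by a quantity determined only by $\rho$ and $D-\tD$. The implementation differs, and in two respects your version is cleaner. The paper splits on which of $[x_\ell, x_\ell+\tfrac{\e}{2})$ or $(x_r-\tfrac{\e}{2},x_r]$ is $\trho$-null and then uses a bump supported in $[x_\ell-\e, x_\ell+\tfrac{\e}{2})$ with Lipschitz constant $\le 4/\e$; that bump may meet $\supp\trho$ in $[x_\ell-\e, x_\ell)$, so concluding that its $\trho$-integral is zero requires an extra observation (namely, that if $\trho$ carries mass left of $x_\ell$ one is forced into the ``second case'' near $x_r$), and moreover the factor written as $\tfrac{4}{\e}$ should be $\tfrac{\e}{4}$ after normalizing by the Lipschitz constant. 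You instead split on which overhang, $x_r-\widetilde{x}_r$ or $\widetilde{x}_\ell-x_\ell$, is at least $\delta=\tfrac12(D-\tD)$ (valid since the two sum to $D-\tD$), and your ramp is already $1$-Lipschitz and supported strictly beyond the corresponding endpoint of $\supp\trho$, so the disjointness is immediate and no normalization is required. The one thing to add at the end: since which case obtains depends on $\trho$, the constant $c$ advertised in the lemma must be taken as the minimum of your right-overhang constant $\tfrac{\delta}{2}\,\rho\big([x_r-\tfrac{\delta}{2}, x_r]\big)$ and its left-overhang analogue $\tfrac{\delta}{2}\,\rho\big([x_\ell, x_\ell+\tfrac{\delta}{2}]\big)$; the paper states this minimum explicitly and you should too, rather than leaving it implicit in the parenthetical remark about the application.
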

\begin{proof}
  Let $[x_\ell, x_r]$ be the smallest interval that contains $\supp\rho$, so that $D=x_r-x_\ell$. Define $\e=D-\tD>0$.
  Since $\supp \trho$ has the smaller diameter, we must have $\trho(I)=0$ for
  at least one of the intervals  $I=[x_\ell, x_\ell+\tfrac\e2)$ or
  $(x_r-\tfrac\e2,x_r]$. 
 Consider the first case. Let $f_N$ be a Lipschitz function which is supported in $[x_\ell-\e, x_\ell+\tfrac\e2)$, takes the value 1
  in $[x_\ell(t), x_\ell(t)+\tfrac\e4]$, and satisfies $|f_N|_{\Lip}\leq 4\e^{-1}$. Then
  \[\W_1(\rho, \trho)\geq
    \frac{4}{\e}\int_{x_\ell}^{x_\ell+\tfrac\e4}\dd\rho(x) =
    c_l(\rho,\e)>0.\]
  A similar argument works for the other case and yields
  $\W_1(\rho, \trho)\geq c_r(\rho, \e)>0$. 
  Thus, we get a positive lower bound $c=\min\{c_l(\rho,\e),
  c_r(\rho,\e)\}>0$, 
  which only depends on $\rho$ and $\e$.
\end{proof}

\subsection{Remarks on strong flocking}

Suppose that the communication protocol has a fat tail \eqref{eq:fattail}, or more generally that \eqref{e:flockingthreshold} holds. For classical solutions, it is known that in addition to flocking and fast alignment, one has convergence of the density profile to a traveling wave:
\begin{equation}
\label{e:strongflocking}
\cW_1(\rho(\cdot + \overline{u} t, t), \rho_\infty)\to 0,  \qquad \text{ as } t\to +\infty.
\end{equation}
Here $\overline{u} = \frac{\int \rho^0 u^0\dx}{\int \rho^0 \dx}$ denotes the average velocity and $\rho_\infty \in \cP_c(\R)$ is some compactly supported probability measure, which can be obtained by pushing forward $\rho^0$ under the flow map.  A convergence like \eqref{e:strongflocking} is sometimes referred to as \textit{strong} flocking (c.f. \cite{ShvydkoyBook}). We claim that strong flocking occurs in our entropically selected 1D weak solutions as well, under the assumption \eqref{e:flockingthreshold}.  

We argue by approximation.  Let $(\rho, P)$ be a weak solution with initial data $(\rho^0, P^0=\rho^0 u^0)$.  Let $(\rho_N, P_N)$ be a sequence of sticky particle solutions constructed as in Theorem \ref{t:SPconv} and its proof.  Note that \eqref{eq:v0} guarantees that $\sum_{i=1}^N m_{i,N} v_{i,N}^0 = \int \rho^0 u^0\dx$ for each $N$; therefore we may assume without loss of generality that both are zero, by Galilean invariance.

Assume \eqref{e:flockingthreshold} holds for $(\rho^0, u^0)$.  We have argued above that \eqref{eq:DV0}--\eqref{e:flockingthreshold} then hold for $(\rho_N^0, u_N^0)$ as a consequence.  Therefore, 
\begin{align*}
\cW_1(\rho_N(t_1), \rho_N(t_2)) 
& \le \sum_{i=1}^N m_{i,N} |x_{i,N}(t_1) - x_{i,N}(t_2)| \le \sum_{i=1}^N m_{i,N} \int_{t_1}^{t_2} |v_{i,N}(t)|\dt \\
& \le \sum_{i=1}^N m_{i,N} \int_{t_1}^{t_2} 2V^0 \exp(-\phi(\overline{D}t))\dt \\
& \le \frac{2V^0}{\phi(\overline{D})} \exp(-\phi(\overline{D})t_1), 
\end{align*}
for $0\le t_1 \le t_2$.  Pick $t_1$ large enough so that the right side of the above is small.  Choosing any $t_2\ge t_1$, then $N=N(t_1, t_2)$ large enough, we may conclude that 
\[
\cW_1(\rho(t_1), \rho(t_2)) \le \cW_1(\rho(t_1), \rho_N(t_1)) + \cW_1(\rho_N(t_1), \rho_N(t_2)) + \cW_1(\rho_N(t_2), \rho(t_2))
\]
can be made as small as desired.  Since $(\rho(t))_{t>0}$ is thus Cauchy in $(\cP_c(\R), \cW_1)$, we may conclude the existence of a $\rho_\infty$ such that \eqref{e:strongflocking} holds.  

The structure of the measure $\rho_\infty$ has been analyzed extensively in \cite{LLST2020geometric} in the context of classical solutions (c.f. also \cite{LS2019}).  In \cite{LLST2020geometric}, the authors studied \textit{mass concentration} in the limiting profile $\rho_\infty$ by demonstrating a correspondence between the set where $e^0$ is zero and the singular support of $\rho_\infty$. It would be interesting to study the structure of $\rho_\infty$ for the weak solutions considered in this paper---it is far from straightforward to predict the outcome from the initial data.  We leave further consideration of this question for future work.

\medskip
{\bf Acknowledgments.}
This material is based upon work supported by the National
Science Foundation under Grant No. DMS-1928930 while TL participated
in a program hosted by the Mathematical Sciences Research Institute
in Berkeley, California, during the Spring 2021 semester.

CT acknowledges the support of NSF grants DMS-1853001 and DMS-2108264.

Both authors thank Roman Shvydkoy and Eitan Tadmor for insightful comments on an earlier draft. We also thank the anonymous referees for their valuable feedback, which pushed us to strengthen our original results.


\def\cprime{$'$}

\end{document}